\def\titlename{\huge Weighted discrete hypergroups}
\title{\titlename}
\def\authname{Mahmood Alaghmandan and Ebrahim Samei}
\author{{\normalsize\sc \authname}}
\definecolor{blue1}{RGB}{32,78,170}
\definecolor{blue2}{RGB}{93,92,160}
\definecolor{blue3}{RGB}{40,51,202}
\definecolor{blue4}{RGB}{0,0,0}
\definecolor{purple1}{RGB}{128,0,128}
\definecolor{El}{rgb}{.4,.9,1}
\normalfont\fontsize{12}{15}\bfseries}{\thesection}{1em}{}
\titleformat{\chapter}[display]
  {\normalfont\sffamily\huge\bfseries\color{blue4}}
  {\chaptertitlename\ \thechapter}{20pt}{\Huge}
\Large\color{blue4}}
\normalfont\fontsize{12}{14}\sffamily\color{blue4}}
\newcommand{\ma}[1]{\textcolor{blue2}{\emph{\textsf{#1}}}}
             \newcounter{pulse}[section]
\numberwithin{pulse}{section}
\numberwithin{equation}{section}
\newtheorem{theorem}[pulse]{\bf \textsf{Theorem}}
\newtheorem{proposition}[pulse]{\bf \textsf{Proposition}}
\newtheorem{lemma}[pulse]{\bf \textsf{Lemma}}
\newtheorem{corollary}[pulse]{\bf \textsf{Corollary}}
\newtheorem{cor}[pulse]{\bf \textsf{Corollary}}
\newtheorem{dummy-eg}[pulse]{\bf \textsf{Example}}
\newtheorem{dummy-rem}[pulse]{\bf \textsf{Remark}}
\newenvironment{eg}{\begin{dummy-eg}\upshape}{\end{dummy-eg}\ignorespacesafterend}
\newenvironment{rem}{\begin{dummy-rem}\upshape}{\end{dummy-rem}\ignorespacesafterend}
\newtheorem{dummy-def}[pulse]{\bf \textsf{Definition}}
\newenvironment{proof}{\noindent{\it Proof.}\/}{\hfill$\Box$\newline\ignorespacesafterend}
\newenvironment{dfn}{\begin{dummy-def}\upshape}{\end{dummy-def}\ignorespacesafterend}
\newcommand{\supp}{\operatorname{supp}}
\newcommand{\conj}{\operatorname{Conj}}
\renewcommand{\ker}{\operatorname{Ker}}
\newcommand{\cA}{{\mathcal A}}
\newcommand{\cT}{{\mathcal T}}
\newcommand{\Gr}{{\mathcal{K}_{\textswab G}}}
\newcommand{\cI}{\mathcal{I}}
\newcommand{\cB}{\mathcal{B}}
\newcommand{\Nat}{{\mathbb N}}
\newcommand{\norm}[1]{\Vert #1 \Vert}
\newcommand{\Ind}{{\bf I}}
\newcommand{\tF}{{\tau_F}}
\newcommand{\wSU}{\widehat{\operatorname{SU}}}
\newcommand{\SU}{{\operatorname{SU}}}
\newcommand{\bT}{\mathbb{T}}
\newcommand{\Zat}{\mathbb{Z}}
\newcommand{\ignore}[1]{{ }}
\newcommand{\m}{{\bf m}}
\begin{document}

%\title[Weighted discrete hypergroups]{Weighted discrete hypergroups}
%
%\author[M.  Alaghmandan, E. Samei]{Mahmood Alaghmandan$^1$, Ebrahim Samei$^2$}
%
%
%
%
%\date{\today}
%
%

\maketitle

\begin{abstract}
Weighted group algebras have been studied extensively in Abstract Harmonic Analysis where complete characterizations have been found for some important properties of weighted group algebras, namely amenability and Arens regularity. One of the generalizations of weighted group algebras   is weighted hypergroup algebras. Defining weighted hypergroups, analogous to weighted groups, we study   Arens regularity and isomorphism to operator algebras for them. We also examine our results on three classes of discrete weighted hypergroups constructed by conjugacy classes of FC groups,  the dual space of compact groups, and hypergroup structure defined by orthogonal polynomials. We observe some unexpected examples regarding Arens regularity and operator isomorphisms of weighted hypergroup algebras.

$\line(1,0){200}
$

\noindent{\bf \textsf{MSC 2010 classifications:} } 43A62, 43A77, 43A30, 20F24

\noindent{\bf \textsf{Keywords:} } hypergroups, hypergroup algebras, weighted hypergroups, compact groups, FC groups, polynomial hypergroups, Arens regularity, injectivity, operator algebra isomorphism.

\end{abstract}

%M.  Alaghmandan, E. Samei]{Mahmood Alaghmandan$^1$, Ebrahim Samei$^2$}

%
%\address{$^1$
%The Fields Institute For Research In Mathematical Sciences,\newline\indent
%222 College St,\newline\indent
%Toronto, ON M5T 3J1, Canada}
%
%\email{m.alaghmandan@utoronto.ca}
%
%\address{$^2$Department of Mathematics and Statistics,\newline\indent
% University of Saskatchewan,\newline\indent
%Saskatoon, SK S7N 5E6, Canada}
%
%\email{samei@math.usask.ca}
%

\begin{section}{Introduction}\label{s:introduction}

Discrete hypergroups were defined as a generalization of (discrete) groups. Also,   some objects related to locally compact groups may be studied as discrete hypergroups.   For instance, double cosets of a locally compact group with respect to a compact open subgroup. In particular, this class includes the hypergroup structures on  conjugacy classes of an FC group (i.e. every conjugacy class is finite). Also, for a compact group $G$, the set of equivalence classes of irreducible (unitary) representations of $G$, denoted by $\widehat{G}$ and called the \ma{dual of the group $G$}, is a commutative  discrete hypergroup.
On one hand these examples as well as hypergroups  defined by orthogonal polynomials connect the studies done on hypergroups to different topics in abstract harmonic analysis.
On the other hand, the similarities of hypergroups and  groups suggest that one may be able to generalize the studies on  groups to hypergroups.

One of the topics related to hypergroups which has been initiated based on a similar study on  groups is \ma{weighted hypergroups} and \ma{weighted hypergroup algebras},   as they are defined in the following. The weighted hypergroup algebra, as a Banach algebra can be the subject of study for different properties of Banach algebras. The first studies over weighted hypergroup algebras may be tracked back to \cite{bh1, gh85, gh86}.

 In this manuscript, we study  Arens regularity and isomorphism to operator algebras for weighted hypergroup algebras.
To recall, the second dual of a Banach algebra can be equipped with two algebraic actions to form Banach algebras, we call a Banach algebra `Arens regular' if these two actions coincide.
Also a  Banach algebra $\cA$ is called an \ma{operator algebra} if there is a Hilbert space $\mathcal{H}$ such that $\cA$ is  a closed subalgebra of $\cB(\mathcal{ H})$.
%For a Banach algebra, one may ask about the existence of an algebra isomorphism from the algebra onto an operator algebra.
The main result  of  \cite{ulger} rules out Arens regularity (and subsequently operator algebra isomorphism) of weighted hypergroup algebras for    non-discrete hypergroups. Consequently, this paper is only dedicated to discrete hypergroups, although many results proved in Sections~\ref{s:weight-on-hypergroups} hold for weights on  non-discrete hypergroups as well.
In this manuscript, we particularly  examine our results on various classes of  weighted hypergroup algebras  with respect to these properties.
 One may note that, for the specific weight $\omega\equiv 1$, the weighted case is reduced back to regular hypergroups and their algebras.

The paper is organized as follows.
We start this paper by  Section~\ref{s:discrete-hypergroups} wherein we give the definition of discrete hypergroups consistently  and briefly go through  three classes of  hypergroup structures we use in examples.
% an example, we introduce $\conj(G)$ the set of all conjugacy classes of an FC group $G$ as a hypergroup and characterize its hypergroup algebra. Features of duals of compact groups, as hypergroups, have strong relations  to the properties of their corresponding compact groups. Also, we check the hypergroup definition for dual of compact groups and perform some observations on them. Eventually, we close this chapter by a polynomial hypergroup structure on $\Bbb{N}_0$.
Section~\ref{s:weight-on-hypergroups} is devoted to weights on (discrete) hypergroups, their corresponding algebras, and their examples.
% In Subsection~\ref{ss:general-theory-weights}, we study weighted hypergroup algebras,  $\ell^1(H,\omega)$, for discrete hypergroups $H$ and hypergroups weights $\omega$. Subsequently,  we introduce some weights which are related to the growing rate of finitely generated hypergroups.
We continue this section by studying some examples.
%In Sections~\ref{ss:weights-on-Conj(G)-from-G},  \ref{ss:central-weights-on-Conj(G)}, and \ref{ss:quotient-weights},
%First in in  Subsection~\ref{ss:weights-on-Conj(G)-from-G}, we study  weights and their properties on $\conj(G)$, as a hypergroup, for FC groups $G$.
%As examples of these weights, if $(G,\sigma)$ is a weighted discrete FC group  for some group weight $\sigma$, then $Z\ell^1(G,\sigma)$, the center of $\sigma$-weighted group algebra, is  isometrically algebraic isomorphic to $\ell^1(\conj(G),\omega_\sigma)$ for some hypergroup weight $\omega_\sigma$ which is generated using $\sigma$. We will introduce and study more examples of hypergroup weights   on $\conj(G)$ here.
In particular, in Subsection~\ref{s:weights-on-^G}, we introduce and study some hypergroup weights on the dual of compact groups.
% study weights on $\conj(G)$ for FC groups $G$  and the weighted hypergroups algebras.
%Finally,   we close the section by Subsection~\ref{ss:weights-on-polynomials}, in which we construct some weights on some simple polynomial hypergroups. The weights defined in this sections form a source of interesting examples with the respect to the main results of the paper.

%%%%%%%%%%%%%%%%

%\noindent{\bf Future Plans:} I am also studying other properties of weighted hypergroup algebras, such as amenability. Because of the generality in the definition of hypergroups,  I mainly work on specific hypergroups, say dual of compact groups. For this family of hypergroups, I have developed partial results for amenability and weak amenability, but still there are lots of possibilities for further research to be done in this project.

%In \cite{ch2}, Choi and Heath studied derivations from semigroup algebra $\Bbb{Z}_+$ to $\ell^\infty(\Bbb{Z}_+)$. Based on the similarity of the semigroup $\Bbb{Z}_+$ and dual of compact semisimple Lie groups, for example the dual of SU(2), I am looking for a similar characterization for the derivations from the hypergroup algebras of this class of hypergroups to their duals.

Arens regularity of weighted group algebras has been studied by Craw and Young in \cite{yo}. They showed that a locally compact group $G$ has a weight $\omega$ such that $L^1(G,\omega)$ is Arens regular if and only if $G$ is discrete and countable. They also characterized the Arens regularity of weighted group algebras with respect to one feature of the (group) weight, called \ma{$0$-clusterness} as described in \cite{da2}.
In Section~\ref{s:Arens-regularity}, the Arens regularity of weighted hypergroup algebras for discrete hypergroups is studied and it is shown   (Theorem~\ref{t:Arens-regularity-summary}) that the strong $0$-clusterness of the  corresponding hypergroup weight results in the Arens regularity of the weighted hypergroup algebra (strong $0$-clusterness implies $0$-clusterness, \cite{da2}).
%Whether $0$-clusterness of a hypergroup weight is necessary for Arens regularity of the algebra or one may come by a counterexample is a question that I am at present working on.
%We investigate this result for the examples of weighted hypergroup developed in the previous section.

Injectivity and equivalently isomorphism of weighted group algebras to operator algebras have  been studied before, see \cite{sa3,va}.
In Section~\ref{s:Operator-algebra-weighted-hypergroups}, studying the hypergroup case, we demonstrate (Theorem~\ref{t:injectiv-l^1(H,omega)-for-T2-weights}) that for hypergroup weights which are weakly additive and whose inverse is $2$-summable over the hypergroup, the weighted hypergroup algebra is injective and hence an isomorphism to an operator algebra exists.
 To do so, we apply some results regarding Littlewood multipliers of hypergroups. This machinery lets us to examine a class of hypergroup weights which are not weakly  additive, namely exponential weights,  in Subsection~\ref{ss:OP-exponential-weights}.

In Sections~\ref{s:Arens-regularity} and \ref{s:Operator-algebra-weighted-hypergroups} we present many examples to highlight some unexpected contrasts with some results in the theory of   weighted Fourier algebras on compact groups (Examples~\ref{eg:Chebyshev-Arens}, \ref{eg:dual-weights-Arens-regular-but-not-the Beurling-Fourier-algebra} and  \ref{eg:chebyshev-OP}).

Some results of this paper were first presented in  the first author's Ph.D. thesis, \cite{ma-the}, under the supervision of Yemon Choi and Ebrahim Samei.
\end{section}

\begin{section}{Discrete hypergroups and  examples}\label{s:discrete-hypergroups}

In this paper, $H$ is always a discrete hypergroup in the sense of \cite{je} unless otherwise is stated. For basic definitions and facts we refer the reader to the fundamental paper of Jewett, \cite{je}, or the comprehensive book \cite{bl}.

\begin{subsection}{Definition}

Let $H$ be a discrete set.  Let $\ell^1(H)$ denote the  Banach space of all functions (bounded measures) $f:H\rightarrow \Bbb{C}$ which are absolutely summable with respect to the counting measure, i.e. $\norm{f}_1:=\sum_{x\in H} |f(x)| <\infty$.
Let $c_c(H)$ and $c_0(H)$   denote respectively  the space of all finitely supported and vanishing at infinity  elements of $\ell^\infty(H)$.
We call $H$  a \ma{discrete hypergroup} if the following conditions hold.
\begin{itemize}
\item[(H1)]{There exists an associative binary operation $*$ called \ma{convolution} on $\ell^1(H)$ under which $\ell^1(H)$ is a Banach algebra. Moreover,
for every $x$, $y$ in $H$, $\delta_x * \delta_y$ is a positive measure with a finite support and $\norm{\delta_x*\delta_y}_{\ell^1(H)}=1$.}
\item[(H2)]{ There exists an element (necessarily unique) $e$ in $H$ such that $
\delta_e * \delta_x = \delta_x * \delta_e = \delta_x$
for all $x$ in $H$.}
\item[(H3)]{There exists a (necessarily unique) bijection $x \rightarrow \check{x}$ of $H$ called \ma{involution} satisfying $
(\delta_x * \delta_y\check{)} = \delta_{\check{y}} * \delta_{\check{x}}$ { for all   } $ x, y \in H$.
%where $\check{\mu}$ denotes the image of measure $\mu\in M(H)$ under the involution.
}
\item[(H4)]{$e$ belongs to $\supp (\delta_x * \delta_y)$ if and only if $y = \check{x}$.}
\end{itemize}

%Applying the convolution of $\ell^1(H)$, one may define an action between subsets of $H$. We denote it by $*$ again where the  notation $A * B$ stands for
%$\bigcup \operatorname{supp}(\delta_x * \delta_y)$  { for all $x \in A$ and $ y \in B$}
%for $A,B$ subsets of the hypergroup $H$. With abuse of notation, we use $x * A$ and $x*y$ to denote $\{x\}*  A$ and $\{x\}*\{y\}$, respectively.

We call a hypergroup $H$ \ma{commutative} if $\ell^1(H)$ forms a commutative algebra.
The \ma{left translation} on  $\ell^\infty(H)$  is defined by
$
L_xf:H\rightarrow \Bbb{C}$ where $L_xf(y):=f(\delta_{\tilde{x}}*\delta_y)$
for each $f$  in  $\ell^\infty(H)$ and $x,y\in H$.
A   non-zero, positive, left invariant linear functional
 $h$ (possibly unbounded)  on $c_c(H)$ is called a  \ma{Haar measure}, \cite{je}.
%Note that $h$ is a (Radon) measure which satisfies the subinvariant translation on measurable sets i.e. $h(K)\leq h(x*K)$ for each finite set $K\subseteq H$ and $x\in H$.
For a discrete hypergroup the existence of a Haar measure is proved and it is  unique up to multiplication by a positive constant.
Indeed, for  a discrete hypergroup,  a Haar measure $h:H\rightarrow (0,\infty)$ such that $h(e)=1$ is defined by
$h(x)=(\delta_{\check{x}}*\delta_x(e))^{-1}$ for all $x\in H$.

%For each $1\leq p <\infty$,
The \ma{hypergroup algebra}, denoted by $L^1(H,h)$   is  the Banach algebra of integrable functions on $H$ with respect to the Haar measure $h$ equipped with the convolution
 $f*_h g :=\sum_{x\in H} f(x) L_xg h(x)$.
It is easy to observe that  $f\mapsto fh$  is an isometric algebra isomorphism from the Banach algebra $L^1(H,h)$ onto the Banach algebra $\ell^1(H)$. Due to this isomorphism, we focus our study on $\ell^1(H)$ without loss of generality.

 \end{subsection}

 \begin{subsection}{The conjugacy classes of FC groups}
%\begin{eg}\label{eg:conj(G)-as-hypergroup}
Let $G$ be a (discrete) group with the group algebra $\ell^1(G)$ and  $\conj(G)$ is the set of  all conjugacy classes of $G$.
We denote the centre of the group algebra by $Z\ell^1(G)$. The group $G$ is called an \ma{FC} or \ma{finite conjugacy group} if for each $C\in \conj(G)$, $|C|<\infty$. For such groups, $\conj(G)$ forms a commutative discrete hypergroup (which is the discrete case of the hypergroup structures defined in \cite[Subsection~8.3]{je}).
Let $\Psi$ denote the  linear mapping from $ Z\ell^1(G)$ to $\ell^1(\conj(G))$ defined by $\Psi(f)(C)=|C| f(C)$  for  $C\in\conj(G)$ where $\Psi(f)(C):=f(x)$ {for (every) $x\in C$.} Then  one can easily check that  $\Psi$ is an  isometric Banach algebra isomorphism between $\ell^1(\conj(G))$ and  $Z\ell^1(G)$.
%\end{eg}

As an extension of finite products of hypergroups (or in particular groups), let $\{H_i\}_{i\in\Ind}$ be a family of discrete hypergroups, then $H:=\bigoplus_{i\in \Ind} H_i$ where for each $x\in H$, $x=(x_i)_{i\in \Ind}$ where $x_i$ is the identity of the hypergroup $H_i$, $e_{H_i}$, for all $i\in \Ind$ except finitely many. $H$ is called \ma{restricted direct product} of $\{H_i\}_{i\in \Ind}$ which is a hypergroup (or a group if for every $i$, $H_i$ is a group).

\begin{eg}\label{eg:RDPF-groups}
For a family of FC groups $\{G_i\}_{i\in \Ind}$, let $G:= \bigoplus_{i\in\Ind}{ G_i}$ be the restricted direct product of $\{G_i\}_{i\in \Ind}$. Then $G$ is a discrete FC group and $\conj(G)$ is  the hypergroup generated by the restricted direct product of $\{\conj(G_i)\}_{i\in \Ind}$,
$\conj(G)=\bigoplus_{i\in \Ind} \conj(G_i)$.
\end{eg}

\end{subsection}

\begin{subsection}{The dual of compact groups}

%\begin{eg}\label{eg:dual-of-compact-groups-as-hypergroups}
Let $G$ be a compact group and  $\widehat{G}$ denotes the set of all irreducible unitary (necessary finite-dimensional) representations of a compact group $G$, up to unitary equivalence relation. It is known that the irreducible  decomposition of the tensor products of elements of $\widehat{G}$ leads to a discrete commutative hypergroup structure on $\widehat{G}$  where   $h(\pi)=d_{\pi}^2$ for $d_\pi$  the dimension of  $\pi\in \widehat{G}$. (See \cite[Example~1.1.14]{bl}).
%\end{eg}

\begin{eg}\label{eg:SU(2)}
Let $\operatorname{SU}(2)$ be the compact Lie group of $2\times 2$ special unitary matrices on $\Bbb{C}$, and let $\wSU(2)$ be the hypergroup of all irreducible representations on $\operatorname{SU}(2)$. It is known that $
\wSU(2)=(\pi_\ell)_{\ell \in \Nat_0}$
where $\Nat_0:=\{0,1,2,\cdots\}$ and  the dimension of $\pi_\ell$ is $\ell+1$. Moreover, for all $\ell, \ell'$, $\overline{\pi}_\ell=\pi_\ell$ and
$
\pi_\ell \otimes \pi_{\ell'} \cong  \pi_{|\ell-\ell'|} \oplus \pi_{|\ell-\ell'|+2 } \oplus \cdots \oplus \pi_{\ell+\ell'}$.
This tensor decomposition is called ``\ma{Clebsch-Gordan}'' decomposition formula. So using the  Clebsch-Gordan formula, we have that
\[
\delta_{\pi_\ell}*\delta_{\pi_{\ell'}} = \sideset{_2}{}\sum_{r=|\ell-\ell'|}^{\ell+\ell'} \frac{(r+1)}{(\ell+1)(\ell'+1)}\delta_{\pi_r}\]
where 
\begin{equation}\label{eq:sum-2}
\sideset{_2}{}\sum_{r=a}^b f(t) = f(a) + f(a+2) + \ldots + f(b-2) + f(b).
\end{equation}
Also
$
\overline{\pi}_\ell=\pi_\ell
$ and $h(\pi_\ell)=(\ell+1)^2$ for all $\ell$.
\end{eg}

\begin{eg}\label{eg:product-of-finite-groups}
Suppose that $\{G_i\}_{i\in\Ind}$ is a non-empty family of compact groups for arbitrary indexing set $\Ind$. Let  $G:=\prod_{i\in\Ind}G_i$ be the product of $\{G_i\}_{i\in \Ind}$ i.e. $G := \{(x_i)_{i\in \Ind}:\ x_i\in G_i\}$
equipped with the product topology. Then $\widehat{G}$ is
the restricted direct product of hypergroups $\{\widehat{G}_i\}_{i\in \Ind}$, (see \cite[Theorem~27.43]{he2}).
\end{eg}

\end{subsection}

\begin{subsection}{Polynomial hypergroups}

%\begin{eg}\label{eg:polynomial-hypergroups}

Let $\Nat_0=\Nat \cup \{0\}$.   Hypergroups related to systems of orthogonal polynomials in one variable have been introduced and studied by Lasser \cite{la*} and Voit \cite{voit*}.
Such a hypergroup structure on  $\Nat_0$ is called a \ma{polynomial hypergroup}  which is also discrete and commutative (\cite[Section~3.2]{bl}).
%the \ma{polynomial hypergroup on $\Nat_0$} induced by $(R_n)_{n\in\Nat_0}$.
%\end{theorem}
%
%\cite[Section~5]{la05} is a good reference to observe almost all of the facts mentioned above. Here, we should mention that there are  plenty of concrete examples of polynomials that satisfy the conditions of Theorem~\ref{t:hypergroup-being-of-polynomials}, namely \ma{Chebyshev polynomial} of the first and second kinds, \ma{cosh polynomials}, \ma{ultraspherical polynomials}, \ma{Jacobi polynomials}, \ma{Karlin--McGregor polynomials}, and \ma{little $q$-{Legendre} polynomials}.
%
%\begin{rem}\label{r:translation-and-Haar}
%For a polynomial hypergroup the left translation is defined by
%\[
%L_n f(m) = \sum_{k=|n-m|}^{n+m} g(n,m;k) f(k).
%\]
%Moreover, the Haar function is defined by
%\[
%h(n)=\left( \delta_n*\delta_n)(0)\right)^{-1}= g(n,n;0)^{-1}=\mu_n^{-1}.
%\]
%\end{rem}

%\end{eg}

\begin{eg}\label{eg:basic-polynomial}
Let  $\Bbb{N}_0$ be equipped with the hypergroup convolution $\delta_n*\delta_m:=(1/2)\delta_{|n-m|}+ (1/2)\delta_{n+m}$. This hypergroup structure is called  \ma{Chebyshev polynomial of the first type}. One can show that the hypergroup algebra of $\Nat_0$ is isomorphic to the subalgebra of symmetric functions on the Fourier algebra of the torus, i.e. $Z_{\pm 1} A(\Bbb{T}):=\{f+\check{f}: f\in A(\Bbb{T})\}$ where  $\check{f}(x)=f(-x)$.
\end{eg}
\end{subsection}

\end{section}

\begin{section}{Weighted discrete hypergroups and examples}\label{s:weight-on-hypergroups}

In this section we study weights on discrete hypergroups, their corresponding algebras, and their examples. Specially we are interested to see concrete examples of weights defined on the classes of commutative discrete  hypergroups which were mentioned in Section~\ref{s:discrete-hypergroups}.

\begin{subsection}{General Theory}\label{ss:general-theory-weights}
We believe all the definitions and observations in this subsection still hold for non-discrete hypergroups (see \cite{gh85}), but here  we are mainly interested in the discrete case.

\begin{dfn}\label{d:weight-on-hypergroups}
Let $H$ be a discrete hypergroup. We call a function $\omega: H \rightarrow(0,\infty)$ a \ma{weight} if, for every $x,y\in H$, $\omega(\delta_x*\delta_y)\leq \omega(x)\omega(y)$.
%where
%\begin{equation}\label{eq:omega(x,y)}
%\omega(\delta_x*\delta_y):= \sum_{t\in H}\omega(t) \delta_x * \delta_y(t).
%\end{equation}
Then we call $(H,\omega)$ a \ma{weighted hypergroup}. Let $\ell^1(H,\omega)$ be the set  of all complex functions on $H$ such that
\[
\norm{f}_{\ell^1(H,\omega)}:=\sum_{t\in H} |f(t)| \omega(t) <\infty.
\]
Then one can easily observe that $(\ell^1(H,\omega),\norm{\cdot}_{\ell^1(H,\omega)})$ equipped with the (extended) convolution of $\ell^1(H)$ forms a Banach algebra which is called a \ma{weighted hypergroup algebra}.
\begin{itemize}
\item{It is easy to see that if $\omega$ is a positive function on $H$ such that $\omega(t)\leq \omega(x)\omega(y)$ for all $t,x,y\in H$ where $t\in \supp(\delta_x*\delta_y)$, then $\omega$ is a weight on $H$. We call such a weight  a \ma{central weight}. We will show later that not all hypergroup weights are central. (See Examples~\ref{eg:weight-not- central} and \ref{eg:whights-which-are-not-central-weights})}
\item{A hypergroup weight $\omega$ on $H$ is called \ma{weakly additive}, if for some $C>0$,  $\omega(\delta_x*\delta_y)\leq C(\omega(x) + \omega(y)) $ for all $x,y\in H$.}
\item{Two weights $\omega_1$ and $\omega_2$ are called \ma{equivalent} if there are  constants $C_1, C_2$ such that $C_1\omega_1\leq \omega_2 \leq C_2\omega_2$.}
\end{itemize}
\end{dfn}

%\begin{dfn}\label{d:central-weight}

%%%%%%%%%%%%%%%%%%%%%%%%%%%%%%%%%%%%%%%%%%%%%%%%%%%%%%%%%%%%%%%%%%%%%%%
%\subsection*{Product of weighted hypergroups}\label{s:prodcut-of-weighted-hypergroups}

% Let $(H_1,\omega_1)$ and $(H_2,\omega_2)$ be two weighted hypergroups. For the hypergroup  $H:=H_1\times H_2$  and the function $\omega:=\omega_1\times\omega_2: H_1\times H_2\rightarrow (0,\infty)$,
 %we have that
 %\begin{eqnarray*}
%& &\sum_{(s,t)\in H_1\times H_2} \omega(s,t) \delta_{(x_1,x_2)}*_{H}\delta_{(y_1,y_2)}(s,t)\\ &=&
%\sum_{s\in H_1} \delta_{x_1}*_{H_1}\delta_{y_1}(s) \omega_1(s) \sum_{t\in H_2} \delta_{x_2}*_{H_2}\delta_{y_2}(t)\omega_2(t)\\
%&\leq& \omega_1(x_1) \omega_1(y_1)\;\omega_2(x_2)\omega_2(y_2)\\
%&=& \omega(x_1,x_2)\omega(y_1,y_2).
%\end{eqnarray*}
%Therefore,
% $\omega$ forms a weight on the hypergroup $H$.
%An argument similar to the group case implies that $\ell^1(H_1\times H_2,\omega_1\times \omega_2)=\ell^1(H_1,\omega_1)\otimes_\gamma\ell^1(H_2,\omega_2)$.
%\vskip1.0em
%
%As an extension of the previous example,
\begin{eg}\label{eg:product-of-hypergroups}
Let $\{H_i\}_{i\in\Ind}$ be a family of discrete hypergroups with corresponding  weights $\{\omega_i\}_{i\in \Ind}$ such that $\omega_i(e_{H_i})=1$ for all $i\in \Ind$ except finitely many. %Let us recall from Section~\ref{s:discrete-hypergroups} that the { restricted direct product} of $\{H_i\}_{i\in\Ind}$, denoted by $H:=\bigoplus_{i\in\Ind}H_i$, is
%\[
%\{ (x_i)_{i\in\Ind}:\; x_i=e_{H_i}\;\text{for all $i\in\Ind$ but finitely many}\}.
%\]
%We can define
%\[
%\delta_{(x_i)_{i\in\Ind}}*\delta_{(y_i)_{i\in\Ind}}(s_i)_{i\in\Ind} :=  \prod_{i\in\Ind} \delta_{x_i}*_{H_i}\delta_{y_i}(s_i)
%\]
%and
Then $\omega(x_i)_{i\in\Ind} :=\prod_{i\in\Ind} \omega_i(x_i)$ forms a hypergroup weight on the restricted direct product of hypergroups $\{H_i\}_{i\in \Ind}$.
\end{eg}

%%%%%%%%%%%%%%%%%%%%%%%%%%%%%%%%%%%%%%%%%%%%%%%%%%%%%%%%%%%%%%%%%%%%%%%%%%%%%%%%%%%%5
%\subsection*{Some weights related to the growth of hypergroups}\label{s:growth-of-hypergroups}

%If $a,b\geq 0$ and $\beta\geq0$, then
%\begin{equation}\label{eq:C-inequality}
%(a+b)^\beta \leq C (a^\beta+b^\beta)
%\end{equation}
%where $C=\min\{1,2^{\beta-1}\}$. We will use this inequality in the following.

A discrete hypergroup $H$ is called \ma{finitely generated} if for a finite set $F \subseteq H$ with $F= \check{F}$, we have $H=\bigcup_{n\in \mathbb{N}} F^{*n}$ then $F$ is called  a \ma{finite symmetric generator} of $H$. We define
\begin{equation}\label{eq:tau}
\tF:H\rightarrow \Bbb{N} \cup \{0\}
\end{equation}
by $\tF(x):=\inf\{n \in \Bbb{N}: \ x\in F^{*n}\}$ for all $x\neq e$ and $\tF(e)=0$.
It is straightforward to verify that if $F'$ is another finite symmetric generator of $H$, then for some constants $C_1,C_2$,  $C_1\tau_{F'}\leq \tF \leq C_2\tau_{F'}$.
If there is no risk of confusion, we may just use $\tau$ instead of $\tF$.

\begin{dfn}\label{d:polynomial-exponential}
 For a given $\beta\geq 0$, $\omega_\beta(x):=(1+\tau(x))^\beta$   is a central weight on $H$ which is called a \ma{Polynomial weight}. Similarly, for given $C>0$ and $0\leq \alpha \leq 1$, $\sigma_{\alpha,C}(x):=e^{C\tau(x)^\alpha}$  is a central weight on $H$ which is called an \ma{Exponential weight}.
\end{dfn}

%\begin{dfn}\label{d:growth-of-hypergroups}
%The hypergroup $H$ is said to be of\\
%\begin{itemize}
%\item{\ma{sub-exponential growth} if for any finite subset $F\subseteq H$ and every $\epsilon>0$ there exists a constant $M=M(F,\epsilon)$ such that $|F^n|\leq M(1+\epsilon)^n$ for all $n\in\Nat$.}
%\item{\ma{polynomial growth} if for any finite subset $F\subseteq H$ there is some $A\geq 0$ and $D\in \Bbb{N}$ such that $h( |F^n|)\leq A n^D$ as $n\rightarrow \infty$. }
%\end{itemize}
%\end{dfn}

\begin{proposition}\label{p:weights-homomorphisms}
Let $H, H'$ be two discrete hypergroups and $\phi: H_1 \rightarrow H_2$ be a surjective hypergroup homomorphism. If $\omega$ is a weight on $H$ so that for every $x \in H$, $\omega(x)\geq \delta$ for some $\delta>0$. Then $\omega'$ defined   by
\[
{\omega}'(y):=\inf\{\omega(x):\ x\in H, \phi(x) =y\} \ \ \ \ \ \ \ (y\in H'),
\]
 is  a weight on $H'$.
\end{proposition}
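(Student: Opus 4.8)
The plan is first to confirm that $\omega'$ takes values in $(0,\infty)$, and then to transport the submultiplicativity of $\omega$ across $\phi$. For the first point, surjectivity of $\phi$ ensures that $\{x\in H:\phi(x)=y\}$ is non-empty for each $y\in H'$, so $\omega'(y)$ is an infimum over a non-empty set and is therefore finite (it is bounded above by $\omega(x)$ for any chosen preimage $x$). The hypothesis $\omega\geq\delta$ is exactly what guarantees $\omega'(y)\geq\delta>0$; this is the only place the uniform lower bound is needed, and without it the infimum could collapse to $0$ and $\omega'$ would fail to be positive.

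The engine of the argument is the way a hypergroup homomorphism transports convolution. Recalling the definition from \cite{je}, $\phi$ induces the pushforward identity: for all $x,x'\in H$ and every $z'\in H'$,
\[
(\delta_{\phi(x)}*\delta_{\phi(x')})(z')=\sum_{z\in\phi^{-1}(z')}(\delta_x*\delta_{x'})(z).
\]
Together with the convention $\omega(\mu)=\sum_z\mu(z)\,\omega(z)$ for a finitely supported probability measure $\mu$ (which is what makes $\omega(\delta_x*\delta_y)$ and $\omega'(\delta_y*\delta_{y'})$ meaningful), I also record the immediate pointwise bound $\omega'(\phi(z))\leq\omega(z)$ for every $z\in H$, valid by the very definition of the infimum defining $\omega'$.

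With these in hand, the verification is a short computation. Fix $y,y'\in H'$ and $\varepsilon>0$, and use the defining infima to pick $x,x'\in H$ with $\phi(x)=y$, $\phi(x')=y'$, $\omega(x)\leq\omega'(y)+\varepsilon$ and $\omega(x')\leq\omega'(y')+\varepsilon$. Setting $c_z:=(\delta_x*\delta_{x'})(z)$, the pushforward identity regroups a finite sum to give
\[
\omega'(\delta_y*\delta_{y'})=\sum_{z'\in H'}\Big(\sum_{z\in\phi^{-1}(z')}c_z\Big)\omega'(z')=\sum_{z\in H}c_z\,\omega'(\phi(z)),
\]
and then the pointwise bound followed by the weight inequality for $\omega$ yields
\[
\sum_{z\in H}c_z\,\omega'(\phi(z))\leq\sum_{z\in H}c_z\,\omega(z)=\omega(\delta_x*\delta_{x'})\leq\omega(x)\omega(x')\leq(\omega'(y)+\varepsilon)(\omega'(y')+\varepsilon).
\]
Letting $\varepsilon\to0$ gives $\omega'(\delta_y*\delta_{y'})\leq\omega'(y)\omega'(y')$, as required.

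I do not anticipate a genuine obstacle: the argument is purely a transport of the $\omega$-inequality. The only care needed is that we work with an infimum rather than a minimum, which forces the $\varepsilon$-approximation above, and that the double sum over the finite fibres of $\phi$ is legitimately regrouped because $\delta_x*\delta_{x'}$ has finite support (so all the displayed sums are finite). A secondary point worth stating explicitly is the convention $\omega(\mu)=\int\omega\,d\mu$ on finitely supported probability measures, since the meaning of $\omega'(\delta_y*\delta_{y'})$ depends on it.
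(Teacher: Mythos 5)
Your proof is correct and follows essentially the same route as the paper's: the paper notes that $\phi$ pushes $\delta_x*\delta_z$ forward to $\delta_{\phi(x)}*\delta_{\phi(z)}$ and that the pointwise bound $\omega'\circ\phi\le\omega$ makes this pushforward norm-decreasing from $\ell^1(H,\omega)$ to $\ell^1(H',\omega')$, giving $\omega'(\delta_{\phi(x)}*\delta_{\phi(z)})\le\omega(x)\omega(z)$ for every pair of preimages, which is exactly your computation. Your $\varepsilon$-approximate minimizers and the paper's implicit passage to the infimum over all preimages are the same step, and your explicit checks of positivity and of the finite regrouping are just the details the paper compresses into ``immediate.''
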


\begin{proof}
Proof is immediate if one note that $\phi: c_c(H) \rightarrow c_c(H')$ satisfies $\norm{\phi(f)}_{\ell^1(H', \omega')} \leq \norm{f}_{\ell^1(H, \omega)}$ and
\[
\omega'(\delta_{\phi(x)} * \delta_{\phi(z)}) =\norm{  \delta_{\phi(x)} * \delta_{\phi(z)} }_{\ell^1(H', \omega')} =\norm{  \phi(\delta_{x}  * \delta_{z}) }_{\ell^1(H', \omega')} \leq \norm{  \delta_{x}    }_{\ell^1(H, \omega)}\norm{  \delta_{z}  }_{\ell^1(H, \omega)} = \omega(x) \omega(z)
\]
for every pair $x,z\in H$.
\end{proof}

\end{subsection}

\begin{subsection}{Weights on $\conj(G)$}\label{ss:weights-on-Conj(G)-from-G}

Let  $(G, \sigma)$ be a weighted group i.e. $\sigma(xy)\leq \sigma(x)\sigma(y)$ for all $x,y\in G$.   We use $\ell^1(G,\sigma)$ to denote the {weighted group algebra} constructed by $\sigma$.  Let  $Z\ell^1(G,\sigma)$   denote the center of  $\ell^1(G,\sigma)$. It is not hard to show that $Z\ell^1(G,\sigma)$ is the set of all $f\in \ell^1(G, \sigma)$ for them $f(yxy^{-1})=f(x)$  for all $x,y\in G$.

The following proposition lets us apply group weights  to generate hypergroup weights on $\conj(G)$. The proof is straightforward, so we omit it here.

\begin{proposition}\label{p:weights-coming-groups}\label{c:relation-between-Zl^1(G,weight)-and-Zl^1(conj(G),center-weight)}
Let $G$ be an FC group possessing  a  weight $\sigma$. Then the mean function $\omega_\sigma$ defined by  $\omega_\sigma(C):={|C|}^{-1}\sum_{t\in C} \sigma(t)$ {$(C\in \conj(G))$}
is a weight on the hypergroup $\conj(G)$.
Further,   $\ell^1(\conj(G),\omega_\sigma)$ is isometrically Banach algebra isomorphic to $Z\ell^1(G,\sigma)$.
\end{proposition}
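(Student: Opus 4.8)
The plan is to deduce everything from the unweighted isometric Banach algebra isomorphism $\Psi\colon Z\ell^1(G)\to\ell^1(\conj(G))$ already recorded above, by checking that it restricts to an isometry between the weighted centres. For each class $C\in\conj(G)$ I would introduce the normalized class sum $\mu_C:=\tfrac{1}{|C|}\sum_{t\in C}\delta_t$. Since $G$ is FC, each $C$ is finite, so $\mu_C$ has finite support and lies in $\ell^1(G,\sigma)$; it is central, and by the definition of $\Psi$ one has $\Psi(\mu_C)=\delta_C$ together with $\norm{\mu_C}_{\ell^1(G,\sigma)}=\tfrac{1}{|C|}\sum_{t\in C}\sigma(t)=\omega_\sigma(C)$.

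First I would verify the isometry at the level of Banach spaces. For a central $f$ (constant on each $C$, with common value $f(C)$), a direct computation gives
\[
\norm{\Psi f}_{\ell^1(\conj(G),\omega_\sigma)}=\sum_{C}|C|\,|f(C)|\,\omega_\sigma(C)=\sum_{C}|f(C)|\sum_{t\in C}\sigma(t)=\sum_{x\in G}|f(x)|\sigma(x)=\norm{f}_{\ell^1(G,\sigma)}.
\]
Since $\Psi$ is a bijection from central functions onto all functions on $\conj(G)$, this identity shows that $\Psi$ maps $Z\ell^1(G,\sigma)$ bijectively and isometrically onto $\ell^1(\conj(G),\omega_\sigma)$ as Banach spaces.

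Next I would obtain the weight inequality, which simultaneously upgrades the isometry to an algebra isomorphism. Fix $C,C'\in\conj(G)$. The product $\mu_C*\mu_{C'}$ is again central, and $\Psi(\mu_C*\mu_{C'})=\delta_C*\delta_{C'}$ because $\Psi$ intertwines the two convolutions in the unweighted case. Applying the isometry of the previous paragraph to the central element $\mu_C*\mu_{C'}$ and then submultiplicativity of the norm of the Banach algebra $\ell^1(G,\sigma)$ yields
\[
\omega_\sigma(\delta_C*\delta_{C'})=\norm{\delta_C*\delta_{C'}}_{\ell^1(\conj(G),\omega_\sigma)}=\norm{\mu_C*\mu_{C'}}_{\ell^1(G,\sigma)}\le\norm{\mu_C}_{\ell^1(G,\sigma)}\norm{\mu_{C'}}_{\ell^1(G,\sigma)}=\omega_\sigma(C)\,\omega_\sigma(C'),
\]
where I use the convention $\omega_\sigma(\mu)=\norm{\mu}_{\ell^1(\conj(G),\omega_\sigma)}$ for a positive measure $\mu$. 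This is exactly the weight condition for $\omega_\sigma$, and it shows that the hypergroup convolution is $\omega_\sigma$-submultiplicative, so $\ell^1(\conj(G),\omega_\sigma)$ is a Banach algebra; combined with the fact that $\Psi$ already intertwines convolution on the dense subalgebra of finitely supported central functions, the isometric bijection is an isometric Banach algebra isomorphism.

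The argument is essentially bookkeeping, so I do not expect a genuine obstacle. The points requiring care are that $\mu_C$ truly lies in the weighted algebra (guaranteed by the FC hypothesis), that $Z\ell^1(G,\sigma)$ coincides with the closed centre of $\ell^1(G,\sigma)$ and is therefore a Banach algebra, and that the convolution of two central elements remains central so that the isometry may be applied to $\mu_C*\mu_{C'}$. The only mildly delicate conceptual step is recognizing that the unweighted intertwining property of $\Psi$ transfers verbatim to the weighted norms once the isometry is in place, which is precisely why no fresh computation with the structure constants of the hypergroup $\conj(G)$ is required.
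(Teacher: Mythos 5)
Your proof is correct. The paper explicitly omits the proof of this proposition as ``straightforward,'' and your argument --- transporting everything through the unweighted isomorphism $\Psi$ via the normalized class sums $\mu_C=\tfrac{1}{|C|}\sum_{t\in C}\delta_t$, reading off the isometry of weighted norms, and then deriving the weight inequality from submultiplicativity of $\norm{\cdot}_{\ell^1(G,\sigma)}$ --- is precisely the natural argument the authors are alluding to, with all the relevant bookkeeping (centrality of $\mu_C*\mu_{C'}$, the identification of $Z\ell^1(G,\sigma)$ with the central elements) handled correctly.
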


%We call $\omega_\sigma$ the \ma{weight derived} from $\sigma$.

%%%%%%%%%%%%%%%%%%%%%%%%%%%%%%%%%%%%%%%%%%%%%%%%%%%%%%%%%%%%%%%%%%%%%%%%%%%%
%\subsection*{Central weights on $\conj(G)$}\label{ss:central-weights-on-Conj(G)}

%Let $\omega$ be a mapping  from $\conj(G)$ to $\Bbb{R}^+$ such that $\omega(E)\leq \omega(C)\omega(D)$
%for all conjugacy classes $E,C,D \in\conj(G)$ where $E\subseteq CD$.
%Then it is immediate that $\omega$ forms  a central weight on the hypergroup $\conj(G)$ as defined in Definition~\ref{d:central-weight}.

\begin{rem}\label{r:central-weights-on-groups-UNIQUE}
Let $G$ be an FC group and let $\omega$ be a central weight on $\conj(G)$. Then the mapping $\sigma_\omega$, defined on $G$ by $\sigma_\omega(x):=\omega(C_x)$,  is  a group weight on $G$. And $\ell^1(\conj(G),\omega)$ as a Banach algebra is isometrically isomorphic to $Z\ell^1(G,\sigma_\omega)$.
\end{rem}

\begin{eg}\label{eg:|C|-is-a-weight-on-conj(G)}
Let $G$ be a discrete FC group. The mapping $\omega(C)=|C|$, for $C\in\conj(G)$, is a central weight on $\conj(G)$.
\end{eg}

\begin{eg}\label{eg:defined-I-C}
Let   $G =\bigoplus_{i\in\Ind}{ G_i}$ for a family of finite groups $\{G_i\}_{i\in \Ind}$.
 Given $C=(C_i)_{i\in \Ind}\in\conj(G)$, define $\Ind_C:=\{i\in \Ind: C_i\neq e_{G_i}\}$.
For each $\alpha>0$, we define a mapping  $\omega_\alpha(C):= \left(1 + |C_{{i_1}}| + \cdots + |C_{{i_n}}|\right)^\alpha$
 where $i_j \in \Ind_C$. We show that $\omega_\alpha$ is a central weight on $\conj(G)$.
 To do so let  $E \subseteq CD$ for some $E,C,D\in \conj(G)$. One can easily show that for each $i\in \Ind$, $E_i\subseteq C_i D_i$; $\Ind_E \subseteq \Ind_C \cup \Ind_D$.
 Therefore,
 \begin{eqnarray*}
 \omega_\alpha(C) &=& ( 1 + \sum_{i\in \Ind_E}|E_{i}|)^\alpha
 \leq (1 + \sum_{i\in \Ind_E} |C_{{i}}||D_{{i}}|)^\alpha\ \ \ \ \ \text{(by Example~\ref{eg:|C|-is-a-weight-on-conj(G)})}\\
 &\leq& \left( 1+ \sum_{i\in \Ind_C}|C_{{i}}|\right)^\alpha\;\left( 1+ \sum_{i\in \Ind_D}|D_{{i}}|\right)^\alpha= \omega_\alpha(C) \omega_\alpha(D).
 \end{eqnarray*}
  \end{eg}

  A group $G$ is called a group with \ma{finite  commutator group} or \ma{FD} if its derived subgroup is finite. It is immediate that for a  group $G$, for  every $C\in \conj(G)$,   $|C|\leq |G'|$ when $G'$ is the \ma{derived subgroup} of $G$. Therefore, the order of conjugacy classes of an FD group are uniformly bounded by $|G'|$. The converse is also true, that is for an FC group $G$, if the order of conjugacy classes are uniformly bounded, then $G$ is an FD group, see \cite[Theorem~14.5.11]{rob}.  The following proposition implies that every hypergroup weight on the conjugacy classes of an FD group which is constructed by a group weight (as given in Proposition~\ref{p:weights-coming-groups}) is equivalent to a central weight. We omit the proof of the following proposition as it is straightforward.

\begin{proposition}\label{p:Omega-is-a-weight}
Let $(G,\sigma)$ be a  weighted FD group.
 Then the hypergroup weight $\omega_z(C):=|G'|^2 \omega_\sigma(C)$, for $C\in\conj(G)$,
forms a central weight. Here $\omega_\sigma$ is defined as in Proposition~\ref{p:weights-coming-groups}.
\end{proposition}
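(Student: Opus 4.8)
The plan is to verify the defining inequality of a central weight directly from Definition~\ref{d:weight-on-hypergroups}: I must show that for all $C,D\in\conj(G)$ and every $E\in\supp(\delta_C*\delta_D)$ one has $\omega_z(E)\le \omega_z(C)\,\omega_z(D)$. Since $\omega_z=|G'|^2\omega_\sigma$, the right-hand side carries a factor $|G'|^4$ while the left carries a factor $|G'|^2$, so the claim reduces to the single estimate
\[
\omega_\sigma(E)\ \le\ |G'|^2\,\omega_\sigma(C)\,\omega_\sigma(D)\qquad (E\in\supp(\delta_C*\delta_D)).
\]
Here I recall that the support of $\delta_C*\delta_D$ in the hypergroup $\conj(G)$ consists precisely of those conjugacy classes $E$ with $E\subseteq CD$, the setwise product of $C$ and $D$ inside $G$.

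First I would run the elementary counting argument that bounds the class mean $\omega_\sigma(E)=|E|^{-1}\sum_{t\in E}\sigma(t)$ by the product of the class means of $C$ and $D$. For $t\in G$ set $N(t):=|\{(c,d)\in C\times D:\ cd=t\}|$; then $\sum_{c\in C,\,d\in D}\sigma(cd)=\sum_{t\in CD}N(t)\,\sigma(t)$. On one hand, submultiplicativity of the group weight $\sigma$ gives $\sum_{c,d}\sigma(cd)\le\sum_{c,d}\sigma(c)\sigma(d)=|C|\,|D|\,\omega_\sigma(C)\,\omega_\sigma(D)$. On the other hand, if $E\subseteq CD$ then $N(t)\ge 1$ for each $t\in E$, so $\sum_{t\in E}\sigma(t)\le\sum_{t\in CD}N(t)\,\sigma(t)=\sum_{c,d}\sigma(cd)$. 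Combining the two bounds yields $|E|\,\omega_\sigma(E)=\sum_{t\in E}\sigma(t)\le |C|\,|D|\,\omega_\sigma(C)\,\omega_\sigma(D)$.

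Finally I would invoke the FD hypothesis. As recorded just before the statement, every conjugacy class of an FD group satisfies $|C|\le|G'|$, hence $|C|\,|D|\le|G'|^2$; together with $|E|\ge 1$ this turns the last inequality into $\omega_\sigma(E)\le (|C|\,|D|/|E|)\,\omega_\sigma(C)\,\omega_\sigma(D)\le |G'|^2\,\omega_\sigma(C)\,\omega_\sigma(D)$, which is exactly the reduced estimate; multiplying by the appropriate powers of $|G'|^2$ then gives $\omega_z(E)=|G'|^2\omega_\sigma(E)\le|G'|^4\omega_\sigma(C)\omega_\sigma(D)=\omega_z(C)\,\omega_z(D)$. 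The only place where any care is needed is the counting identity $\sum_{c,d}\sigma(cd)=\sum_t N(t)\sigma(t)$ together with the observation $N(t)\ge 1$ on $E$; everything else is bookkeeping, and the finiteness of $G'$ is precisely what makes the uniform constant $|G'|^2$ available. One could alternatively derive the same estimate from the fact that $\omega_\sigma$ is already a hypergroup weight by Proposition~\ref{p:weights-coming-groups} and that the structure constant of $\delta_E$ in $\delta_C*\delta_D$ equals $|E|\,N(t)/(|C|\,|D|)\ge 1/(|C|\,|D|)\ge|G'|^{-2}$, but the direct computation above is self-contained.
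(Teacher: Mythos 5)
Your argument is correct and complete: the reduction to $\omega_\sigma(E)\le|G'|^2\omega_\sigma(C)\omega_\sigma(D)$, the counting identity $\sum_{c,d}\sigma(cd)=\sum_t N(t)\sigma(t)$ with $N(t)\ge 1$ on $E\subseteq CD$, and the uniform bound $|C|,|D|\le|G'|$ for FD groups together give exactly the central-weight inequality. The paper itself omits the proof as ``straightforward,'' so there is nothing to compare against, but your computation (or the equivalent one-line variant using the structure constant $|E|N(t)/(|C||D|)\ge|G'|^{-2}$ together with the fact that $\omega_\sigma$ is already a hypergroup weight) is evidently the intended argument.
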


In contrast to Proposition~\ref{p:Omega-is-a-weight}, we will see in the following examples that there exist  weights  on FC groups  (with infinite derived subgroup) which are not equivalent to any central weight.

%\vskip1.0em

%\noindent{\bf Question.} Is the a characterization for the groups which are satisfying the uniform boundedness of the order of the conjugacy classes?

%\vskip2.0em

\end{subsection}

%\noindent{\bf Question.}  Do we know any weight on hypergroup $\conj(G)$ which comes from a weight $\sigma$ on $G$ and is not  a central weight?

%\subsection*{An example: $\conj(S_3)$}\label{ss:conjugacy-of-S4}

%The natural question that one may ask would be the existence of a weight over $\conj(G)$, for some discrete FC group $G$, which is not equivalent to any central weight with respect to the equivalency defined in Definition~\ref{d:equivalent-weights}. In this subsection, we generate a class function  which is satisfying Lemma~\ref{l:center-weights} but is not equivalent to any central weight.
%Obviously, based on Theorem~\ref{t:Omega-is-a-weight}, for such a group $G$, $\sup_{C\in \conj(G)}|C|=\infty$.

\begin{eg}\label{eg:weight-not- central}
Let $S_3$ be the symmetric group of order $6$.
% First, we study $\conj(S_n)$ for $n=3$ and some possible weights on the finite hypergroup $\conj(S_3)$. Recall that for any element
%$x=(i_1^{(1)} \cdots  i_{k_1}^{(1)})\cdots (i_1^{(m)} \cdots  i_{k_m}^{(m)})\in S_n$, where $(i_1^{(j)} \cdots  i_{k_j}^{(j)})$'s are pairwise commute cycles, the conjugacy class of $x$ is the set of all elements of $S_n$ which can be written in the same cycle structure \cite[Section~1.3]{dum}.
%Table~\ref{fig:Conj(S3)} summarizes  the support of $\delta_{C} * \delta_{D}$ for all $C,D \in \conj(S_3)$.
%As an example, $\supp(\delta_{C_{(12)}}*\delta_{C_{(123)}}) = C_{(12)}$.  To check this note that for each $z\in G$ where $1_{C_{(12)}}\circledast 1_{C_{ (123) }}(z)\neq 0$, one gets that
%\begin{eqnarray*}
%1_{C_{(12)}}\circledast 1_{C_{(123)}}(z) &=& \sum_{t\in S_3} 1_{C_{(12)}}(t) 1_{C_{(123)}}(t^{-1}z)\\
%&=& \sum_{t\in S_3} 1_{C_{(12)}}(t) 1_{tC_{(123)}}(z) \\
%&=& \sum_{t \in C_{(12)} C_{(123)}}(t).
%\end{eqnarray*}
%So, by the definition of the convolution on $\conj(S_3)$, $\supp(\delta_{C_{(12)}}*\delta_{C_{(123)}}) = C_{(12)}C_{(123)}$ which is $C_{(12)}$.
%Clearly, since $\conj(S_3)$ forms a commutative hypergroup the table is symmetric, and hence, one may complete the other half symmetrically.
Let $\omega$ be defined on $\conj(S_3)$ by $\omega(C_e)=1$, $\omega(C_{(12)})=2$, and $\omega(C_{(123)})=5$.
One may verify that  $\omega$  is  a   weight on $\conj(S_3)$.
 On the other hand, since $5=\omega(C_{(123)}) \nleqslant \omega(C_{(12)})^2=4$, $\omega$ is not a central weight.
\end{eg}

\begin{eg}\label{eg:whights-which-are-not-central-weights}
We generate the restricted direct product  $G=\bigoplus_{n\in\Bbb{N}} S_3$. Let us define the weight $\omega' := \prod_{n\in \Bbb{N}}\omega$ on $\conj(G)$ where $\omega$ is the hypergroup weight on $\conj(S_3)$ defined in Example~\ref{eg:weight-not- central}.
For each $N\in\Bbb{N}$, define $D_N:=\prod_{n\in\Bbb{N}}D_n^{(N)} \in \conj(G)$ where $D_n^{(N)}=C_{(123)}$ for all $n\in 1, \ldots, N$ and $D_n^{(N)}=C_e$ otherwise. One can verify that $D_N \in \supp(\delta_{E_N}*\delta_{E_N})$ for $E_N=\prod_{n\in\Bbb{N}}E^{(N)}_n \in \conj(G)$ with $E_n^{(N)}=C_{(12)}$ for all $n\in 1, \ldots, N$ and $E_n^{(N)}=C_e$ otherwise. Therefore
\[
 \frac{\omega'(D_N)}{\omega'(E_N)^2}=\prod_{n=1}^{N} \frac{\omega(C_{(123)})}{\omega(C_{(12)})^2} = (5/4)^{N}  \rightarrow \infty
 \]
 where $N\rightarrow \infty$.  Hence, $\omega'$ is not equivalent to any central weight.
% We claim that $\omega'$ is a weight which cannot equal  any central weight i.e. there is not a constant $M$ such that $M\omega'$ is a central weight. To prove this claim, let $\omega_z$ be a central weight and $\alpha_1$ and $\alpha_2$ two positive constants such that $\alpha_1 \omega' \leq \omega_z \leq \alpha_2\omega'$. Hence,
 %\[
% \frac{\omega'(D_N)}{\omega'(E_N)^2} \leq \frac{\alpha_2^2 \omega_z (D_N)}{\alpha_1\omega_z(E_N)^2}< \frac{\alpha_2^2}{\alpha_1}
 %\]
% which is a contradiction.
\end{eg}

%\vskip2.0em

%%%%%%%%%%%%%%%%%%%%%%%%%%%%%%%%%%%%%%%%%%%%%%%%%%%%%%%%%%%%%%%%%%%%%%%%%%%%%%%%%%%%%%%%%%%%%%%%%%%%%%%

%%%%%%%%%%%%%%%%%%%%%%%%%%%%%%%%%%%%%%%%%%%%%%%%%%%%%%%%%%%%%%%%%%%%%%%%%%%%%%%%%%%

We close this subsection with the following corollary of Lemma~\ref{p:weights-homomorphisms}.

\begin{cor}\label{c:Quotient-mapping-Tomega}
Let $G$ be an FC group, $N$  a normal subgroup of $G$, and $\omega$  a weight on $\conj(G)$ such that there is some  $\delta>0$ such that $\omega(C)>\delta$, for any $C\in \conj(G)$.
Then the mapping $\tilde{\omega}:\conj(G/N)\rightarrow \Bbb{R}^+$
defined by $\tilde{\omega}(C_{xN}):=\inf\{\omega(C_{xy}):\ y\in N\}$, for $C_{xN} \in \conj(G/N)$,
 forms  a weight on $\conj(G/N)$.
\end{cor}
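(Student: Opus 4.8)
The plan is to realize $\tilde\omega$ as an instance of the weight produced by Proposition~\ref{p:weights-homomorphisms} (labelled ``Lemma'' in the closing sentence), applied to a suitable surjective hypergroup homomorphism between the conjugacy-class hypergroups. The natural candidate is the map induced by the quotient homomorphism $q:G\to G/N$. Since $N$ is normal, $q$ is a group homomorphism, and it carries conjugacy classes of $G$ onto conjugacy classes of $G/N$: if $x\sim_G x'$ then $q(x)\sim_{G/N} q(x')$, so there is a well-defined set map $\phi:\conj(G)\to\conj(G/N)$ with $\phi(C_x)=C_{xN}$. This $\phi$ is surjective because $q$ is. The first substantive step is therefore to verify that $\phi$ is a \emph{hypergroup} homomorphism, i.e.\ that it respects the convolution on $\ell^1(\conj(G))$; this should follow from the fact that $q$ induces an algebra homomorphism $\ell^1(G)\to\ell^1(G/N)$ carrying $Z\ell^1(G)$ into $Z\ell^1(G/N)$, transported through the isometric isomorphisms $\Psi$ of Subsection~\ref{ss:weights-on-Conj(G)-from-G} that identify $\ell^1(\conj(G))$ with $Z\ell^1(G)$.

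Once $\phi$ is known to be a surjective hypergroup homomorphism, the second step is to match the two infimum formulas. Proposition~\ref{p:weights-homomorphisms} produces the weight $\omega'(C_{xN})=\inf\{\omega(C):\ C\in\conj(G),\ \phi(C)=C_{xN}\}$, so I would check that the fibre $\phi^{-1}(C_{xN})$ consists exactly of the classes $C_{xy}$ with $y\in N$. Indeed $q^{-1}(\{g\in G: gN\in C_{xN}\})$ is the set of $g\in G$ mapping into the coset-conjugacy-class of $xN$, and writing such $g$ as a conjugate of some $xy$ with $y\in N$ shows the fibre is precisely $\{C_{xy}:y\in N\}$. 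Hence $\omega'=\tilde\omega$, and the conclusion is immediate from the Proposition.

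The hypothesis of Proposition~\ref{p:weights-homomorphisms} that $\omega$ be bounded below by some $\delta>0$ is supplied verbatim in the statement, so the only thing to confirm is that this lower bound is genuinely used correctly: it guarantees each fibrewise infimum is a well-defined strictly positive number (the infimum of a set bounded below by $\delta$ lands in $[\delta,\infty)$), so $\tilde\omega$ takes values in $(0,\infty)$ as a weight must.

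I expect the main obstacle to be the verification in the first step that $\phi$ is a hypergroup homomorphism in the precise sense required — that is, pinning down exactly how $q$ acts on the convolution products $\delta_{C_x}*\delta_{C_y}$ and confirming that the induced map on measures is contractive and multiplicative. This is essentially the statement that the quotient map $G\to G/N$ descends to a homomorphism of centre algebras $Z\ell^1(G)\to Z\ell^1(G/N)$, which is standard but must be stated carefully via $\Psi$ so that the hypotheses of Proposition~\ref{p:weights-homomorphisms} (contractivity of $\phi$ on $c_c$ and the submultiplicativity estimate) are literally met. Everything after that is a routine identification of fibres, so I would keep the proof short, noting that ``the proof is immediate from Proposition~\ref{p:weights-homomorphisms} once one checks that $C_x\mapsto C_{xN}$ is a surjective hypergroup homomorphism with fibre over $C_{xN}$ equal to $\{C_{xy}:y\in N\}$.''
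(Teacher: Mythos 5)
Your proposal is correct and is exactly the paper's route: the paper offers no written proof, presenting the statement as a direct consequence of Proposition~\ref{p:weights-homomorphisms} applied to the induced surjective hypergroup homomorphism $\conj(G)\to\conj(G/N)$, $C_x\mapsto C_{xN}$, whose fibres you correctly identify as $\{C_{xy}:y\in N\}$. Your verification that this map respects convolution (via the quotient homomorphism $Z\ell^1(G)\to Z\ell^1(G/N)$ transported through $\Psi$) is precisely the routine check the authors leave implicit.
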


\begin{subsection}{Weights on duals of compact groups}\label{s:weights-on-^G}

In this subsection, $G$ is a compact group. We recall that for each $\pi \in \widehat{G}$ and $f\in L^1(G)$,
\[
\widehat{f}(\pi):=\int_G f(x) \overline{\pi(x)} dx
\]
is the \ma{Fourier transform} of $f$ at $\pi$. Let $VN(G)$ denote the group von Neumann algebra of $G$, i.e. the von Neumann algebra generated by the left regular representation of $G$. It is well-known that the predual of $VN(G)$, denoted by $A(G)$, is a Banach algebra of continuous functions on $G$; it is called the \ma{Fourier algebra} of $G$. Moreover, for every $f\in A(G)$,
\[
\norm{f}:=\sum_{\pi \in \widehat{G}} d_\pi \norm{\widehat{f}(\pi)}_1 <\infty,
\]
where $\norm{\cdot}_1$ denotes the trace-class operator norm (look at \cite[Section~32]{he2}).

In an attempt to find the noncommutative analogue of weights on groups, Lee and Samei in \cite{LeSa} defined a \ma{weight on $A(G)$} to be a densely defined (not necessarily bounded) operator $W$ affiliated with $VN(G)$ and satisfying certain properties mentioned in \cite[Definition~2.4]{LeSa} (see also \cite{sp}).
Specially they assume that  $W$ has a bounded inverse, $W^{-1}$, which belongs to $VN(G)$.
For a weight $W$ on $A(G)$, the \ma{Beurling-Fourier algebra}  denoted by $A(G,W)$ is defined to be the set of all  $f\in A(G)$ such that
\[
\norm{f}_{A(G,W)}:=\sum_{\pi \in \widehat{G}} d_\pi \norm{\widehat{f}(\pi) \circ W}_1 <\infty.
\]
Indeed $(A(G,W), \|\cdot\|_{A(G,W)})$ forms a Banach algebra  with pointwise multiplication.  For abelian groups, the definition of Beurling-Fourier algebra corresponds the classical weighted group algebra  on the dual group. In \cite{LeSa}, the authors also studied Arens regularity and isomorphism to operator algebras for Beurling-Fourier algebras.
%They show that
%\[
%A(G,W)=\{W^{-1}\phi: \phi\in A(G)\}.
%\]

\begin{dfn}\label{d:defining-weights-on-^G}
Let $G$ be a compact group and $W$ a weight on $A(G)$. We define a function $\omega_W:\widehat{G} \rightarrow (0,\infty)$ by
\begin{equation}\label{eq:weights-on-^G}
\omega_W(\pi):=\frac{\norm{I_\pi\circ W}_1}{d_\pi}\ \ \ (\pi \in \widehat{G}),
\end{equation}
where $\norm{\cdot}_1$ denotes the trace norm and $I_\pi$ is the identity matrix corresponding to the Hilbert space of $\pi$.
\end{dfn}

As a specific class of weights on the Fourier algebra of a compact group $G$, in \cite{LeSa} (and independently, in \cite{sp}), \ma{central weights} on $A(G)$ are defined.
 Indeed, \cite[Theorem~2.12]{LeSa} implies that each central weight  $W$ can be represent by a unique function  $\omega_W:\widehat{G}\rightarrow (0,\infty)$ such that $\omega_W(\sigma) \leq \omega_W(\pi_1)\omega_W(\pi_2)$ for all $\pi_1,\pi_2,\sigma \in \widehat{G}$ where $\sigma \in \supp(\delta_{\pi_1}*\delta_{\pi_2})$. In this specific case of operator weights, $\omega_W$ matches with our definition in Definition~\ref{d:defining-weights-on-^G} for a central weight on the hypergroup $\widehat{G}$.   In the following we show that the same is true for a general   weight on $A(G)$ as well.
%Moreover, if $\bG$ is a Lie group, the mapping $\iota$ is one to one.
%To demonstrate this fact, we may note that $\lin\{\chi_\pi\}_{\pi\in \widehat{G}}$ is dense in $ZA(G,W)$ which separates the elements of $\conj(G)$ as proven in \cite{stei}.

\vskip1.0em

%Moreover, for a central weight $W$ on a compact group $G$, the norm of  $A(G,W)$ can be characterized as
%\begin{equation}\label{eq:norm-of-Beurling}
 %%\norm{f}_{A(G,W)}:=\sum_{\pi\in\widehat{G}} d_\pi \omega_W(\pi) \norm{\widehat{f}(\pi)}_{1}\ \ (f\in A(G,W)).
%\end{equation}
%According to the definition of weights in \cite{LeSa}, for each central weight $W$, $\inf_{\pi\in \widehat{G}}\omega_W(\pi)>0$. Therefore, $A(G,W)\subseteq A(G)$.
%The maximal ideal space of $A(G,W)$ for central weights $W$ has been studied, in \cite{sp}, for central weights $W$ so that $\inf_{\pi\in \widehat{G}}\omega_W(\pi)>0$.
%Since for such a central weight $W$, $A(G,W)\subseteq C(G) \subseteq L^1(G)$, one may define $ZA(G,W)=ZL^1(G) \cap A(G,W)$.
%Hence, there is a one to one  mapping $\iota:\conj(G)\rightarrow \sigma(ZA(G,W))$, \cite{sp}.

Let us  define  $
ZA(G,W):=\{f\in A(G,W): f(yxy^{-1})=f(x)\ \text{for all $x\in G$}\}$
which is a Banach algebra with pointwise product and $\norm{\cdot}_{A(G,W)}$. Note that for the operator  weights $W$ where $\omega_W(\pi)=1$ for every $\pi \in \widehat{G}$,  $ZA(G,W)=ZA(G)$. For more on $ZA(G)$, look at \cite{zag}.

\begin{theorem}\label{t:ZA(G,W)=l^1(^G,w)}
Let $G$ be a compact group and $W$ a  weight on $A(G)$.
% i.e. $\omega_W$ is a central weight on $\widehat{G}$ such that $\inf_\pi \omega_W(\pi)>0$.
Then $\omega_W$ is a weight on the hypergroup $\widehat{G}$ and the weighted hypergroup algebra $\ell^1(\widehat{G},\omega_W)$ is isometrically isomorphic to $ZA(G,W)$.
\end{theorem}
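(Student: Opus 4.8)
The plan is to exhibit a single explicit map and read off both conclusions from it. Define $\Phi \colon \ell^1(\widehat{G},\omega_W) \to ZA(G,W)$ on point masses by $\Phi(\delta_\pi) = d_\pi^{-1}\chi_\pi$, where $\chi_\pi = \operatorname{tr}\circ\,\pi$ is the character of $\pi$, and extend it linearly to the finitely supported functions. The hypergroup product on $\widehat{G}$ is governed by the tensor-product decomposition, namely $\delta_\pi * \delta_{\pi'} = \sum_\sigma \frac{d_\sigma\, m_{\pi,\pi'}^\sigma}{d_\pi d_{\pi'}}\,\delta_\sigma$, where $m_{\pi,\pi'}^\sigma$ is the multiplicity of $\sigma$ in $\pi\otimes\pi'$ (consistent with $h(\pi)=d_\pi^2$ and with the Clebsch--Gordan formula of Example~\ref{eg:SU(2)}), while the characters multiply pointwise by $\chi_\pi\chi_{\pi'} = \sum_\sigma m_{\pi,\pi'}^\sigma \chi_\sigma$. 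Comparing these two identities shows at once that $\Phi(\delta_\pi)\Phi(\delta_{\pi'}) = \Phi(\delta_\pi * \delta_{\pi'})$, so $\Phi$ is multiplicative on point masses.

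The key computation is the $A(G,W)$-norm of a central function. By Schur's lemma, $f$ lies in $ZA(G)$ precisely when $\widehat{f}(\pi) = c_\pi I_\pi$ for scalars $c_\pi$; since $\widehat{\chi_\pi}$ is supported at $\pi$ with $\widehat{\chi_\pi}(\pi) = d_\pi^{-1} I_\pi$, this means $f = \sum_\pi c_\pi d_\pi \chi_\pi$. As $W$ is affiliated with $VN(G)\cong\bigoplus_\pi B(H_\pi)$ it acts block-diagonally, so $\widehat{f}(\pi)\circ W = c_\pi\,(I_\pi \circ W)$ and the distinct blocks contribute without interference; hence
\[
\norm{f}_{A(G,W)} = \sum_\pi d_\pi\,\norm{\widehat{f}(\pi)\circ W}_1 = \sum_\pi |c_\pi|\,d_\pi\,\norm{I_\pi\circ W}_1 = \sum_\pi |c_\pi|\,d_\pi^2\,\omega_W(\pi).
\]
In particular $\norm{d_\pi^{-1}\chi_\pi}_{A(G,W)} = \omega_W(\pi) = \norm{\delta_\pi}_{\ell^1(\widehat{G},\omega_W)}$, and applying the displayed formula to $\Phi(g)$ for finitely supported $g$ gives $\norm{\Phi(g)}_{A(G,W)} = \norm{g}_{\ell^1(\widehat{G},\omega_W)}$. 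Thus $\Phi$ is isometric and extends to an isometry on all of $\ell^1(\widehat{G},\omega_W)$; the same formula read in reverse (with $c_\pi = g(\pi)\,d_\pi^{-2}$) shows its range is exactly the central part of $A(G,W)$, i.e. $ZA(G,W)$, while invertibility of $W$ guarantees $\omega_W>0$.

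It remains to assemble both assertions. Combining multiplicativity on point masses with the continuity of the pointwise product on the Banach algebra $ZA(G,W)$ and the density of the finitely supported functions, $\Phi$ is an algebra homomorphism, hence an isometric algebra isomorphism onto $ZA(G,W)$. Finally, the weight inequality falls out as a by-product: since $\Phi$ is an isometric homomorphism into a Banach algebra,
\[
\omega_W(\delta_\pi * \delta_{\pi'}) = \norm{\Phi(\delta_\pi * \delta_{\pi'})}_{A(G,W)} = \norm{\Phi(\delta_\pi)\,\Phi(\delta_{\pi'})}_{A(G,W)} \leq \norm{\Phi(\delta_\pi)}_{A(G,W)}\,\norm{\Phi(\delta_{\pi'})}_{A(G,W)} = \omega_W(\pi)\,\omega_W(\pi'),
\]
so $\omega_W$ is a weight on $\widehat{G}$.

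The step I expect to be the main obstacle is the norm computation for a possibly \emph{non-central} operator weight: one must verify carefully that, although $W$ need not act as a scalar on each block $B(H_\pi)$, a central $f$ only ever pairs its scalar Fourier coefficient $c_\pi I_\pi$ against $W_\pi$, so that $\norm{\widehat{f}(\pi)\circ W}_1 = |c_\pi|\,\norm{I_\pi\circ W}_1$ with no off-diagonal or cross-block contributions surviving. Pinning this down cleanly — and confirming that the resulting series characterizes membership in $ZA(G,W)$ — is precisely where the defining properties of $W$ as a weight on $A(G)$ (affiliation with $VN(G)$ together with bounded invertibility) are genuinely used.
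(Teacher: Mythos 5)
Your proposal is correct and is essentially the paper's own argument: your $\Phi$ is exactly the inverse of the paper's map $\cT(\chi_\pi)=d_\pi\delta_\pi$, the isometry rests on the same computation $\widehat{\chi}_\pi(\pi)=d_\pi^{-1}I_\pi$ giving $\norm{d_\pi^{-1}\chi_\pi}_{A(G,W)}=\omega_W(\pi)$, and both arguments extend by density of the character span in $ZA(G,W)$ and of $c_c(\widehat{G})$ in $\ell^1(\widehat{G},\omega_W)$, reading off the weight inequality from submultiplicativity of the Banach algebra norm. The step you flag as the main obstacle is in fact immediate, since $\widehat{f}(\pi)\circ W=c_\pi(I_\pi\circ W)$ and the trace norm is absolutely homogeneous, so nothing further is needed there.
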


\begin{proof}
%Note that ${\cal F}$, the Fourier transform, is an isometric isomorphism from Banach space $A(G)$ onto Banach space ${\cal L}^1(\widehat{G})$, \cite[Theorem~8.4.16]{du}.
%Since $G$ is a compact group $A(G,W)\subseteq L^1(G)$; therefore, for each $f\in ZA(G,W)$ and $g\in L^1(G)$, $f*g=g*f$; $f\in ZL^1(G)$. Hence, $\pi(f)$ for %every $f\in ZA(G,W)$, for each $\pi\in \widehat{G}$, $\widehat{f}(\pi)=\alpha_\pi I_\pi$ for some $\alpha_\pi\in \Bbb{C}$. By (\ref{eq:norm-of-Beurling}),
%\begin{eqnarray*}
%\norm{f}_{A(G,W)} = \sum_{\pi\in \widehat{G}} d_\pi \omega_W(\pi) |\alpha_\pi| \norm{I_\pi}_1 =\sum_{\pi\in\widehat{G}} \alpha_\pi \omega_W(\pi) d_\pi^2.
%\end{eqnarray*}
Let ${\cal X}(G)$ denote the linear span of all the characters of $G$.
First  define a linear mapping $\cT: {\cal X}(G) \rightarrow c_c(\widehat{G})$  by
$\cT(\chi_\pi)=d_\pi\delta_\pi$ for each $\pi \in \widehat{G}$.  Let  $f=\sum_{i=1}^n \alpha_i \chi_{\pi_i} \in \mathcal{X}(G)$ for  $\pi_i \in \widehat{G}$ and $\alpha_i\in \Bbb{C}$.  In this case,
\begin{eqnarray*}
\norm{\cT(f)}_{\ell^1(\widehat{G},\omega)} &=& \sum_{i=1}^n |\alpha_i| d_{\pi_i} \omega(\pi_i) = \sum_{i=1}^n |\alpha_i| d_{\pi_i} \frac{\norm{I_{\pi_i}\circ W}_1}{d_{\pi_i}}\\
&=& \sum_{i=1}^n d_{\pi_i} \norm{\frac{\alpha_i}{d_{\pi_i}} I_{\pi_i}\circ W}_1 = \sum_{i=1}^n d_{\pi_i} \norm{ \alpha_i\widehat{\chi}_{\pi_i}(\pi_i)\circ W}_1=\norm{f}_{A(G,W)}.
\end{eqnarray*}
Therefore, $\cT$ forms a norm preserving linear mapping.
To show that ${\cal T}$ is an algebra homomorphism note that ${\cal T}(\chi_{\pi_1}\chi_{\pi_2})=   {\cal T}(\chi_{\pi_1})* {\cal T}(\chi_{\pi_2})$.
 It is known that ${\cal X}(G)$ is dense in $ZA(G,W)$ and clearly $c_c(\widehat{G})$ is dense in $\ell^1(\widehat{G}, \omega_W)$. So $\cT$ can be extended as an algebra isomorphism from $ZA(G,W)$ onto $\ell^1(\widehat{G},\omega_W)$ which preserves the norm. In particular,  $\ell^1(\widehat{G},\omega_W)$ forms an algebra with respect to its weighted norm and the convolution, and so $\omega_W$ is actually a hypergroup weight on $\widehat{G}$.
 \end{proof}

The proof of the following lemma is straightforward so we omit it here.

\begin{lemma}\label{l:dimension-as-a-weight}
Let $G$ be a compact group and $\widehat{G}$ be the set of all irreducible representations of $G$ as a discrete commutative hypergroup. Then $\omega_\beta(\pi)=d_\pi^\beta= h(d_\pi)^{\beta/2}$ is a central weight for each $\beta\geq 0$.
\end{lemma}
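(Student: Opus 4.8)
The plan is to verify that $\omega_\beta(\pi) := d_\pi^\beta$ satisfies the central weight inequality directly from the definition of the hypergroup convolution on $\widehat{G}$, and then to read off the identity $d_\pi^\beta = h(\pi)^{\beta/2}$ from the formula $h(\pi) = d_\pi^2$ recorded in Subsection~\ref{s:discrete-hypergroups} (the dual of compact groups). Recall from Definition~\ref{d:weight-on-hypergroups} that to show $\omega_\beta$ is a central weight it suffices to prove that $\omega_\beta(\sigma) \leq \omega_\beta(\pi_1)\omega_\beta(\pi_2)$ for all $\pi_1, \pi_2 \in \widehat{G}$ and every $\sigma \in \supp(\delta_{\pi_1} * \delta_{\pi_2})$; since $\beta \geq 0$, this is equivalent to $d_\sigma \leq d_{\pi_1} d_{\pi_2}$ whenever $\sigma$ appears in the decomposition of $\pi_1 \otimes \pi_2$.

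First I would recall the precise origin of the hypergroup convolution on $\widehat{G}$. For $\pi_1, \pi_2 \in \widehat{G}$, the tensor product representation $\pi_1 \otimes \pi_2$ decomposes into irreducibles, and the support of $\delta_{\pi_1} * \delta_{\pi_2}$ is exactly the set of $\sigma \in \widehat{G}$ occurring in that decomposition. Hence the key step is the dimension inequality: if $\sigma$ is an irreducible subrepresentation of $\pi_1 \otimes \pi_2$, then $d_\sigma \leq d_{\pi_1} d_{\pi_2}$, because the dimension of $\pi_1 \otimes \pi_2$ equals $d_{\pi_1} d_{\pi_2}$ and $\sigma$ sits inside it, so its dimension cannot exceed the total. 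Raising both sides to the power $\beta \geq 0$ preserves the inequality, giving $d_\sigma^\beta \leq (d_{\pi_1} d_{\pi_2})^\beta = d_{\pi_1}^\beta d_{\pi_2}^\beta$, which is exactly the central weight condition.

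Finally I would record the second identity in the statement. Since the Haar measure on $\widehat{G}$ is given by $h(\pi) = d_\pi^2$, we have $h(\pi)^{\beta/2} = (d_\pi^2)^{\beta/2} = d_\pi^\beta = \omega_\beta(\pi)$, which closes the proof. I do not expect any genuine obstacle here: the content is entirely the elementary observation that a subrepresentation has dimension at most that of the ambient representation, combined with monotonicity of $t \mapsto t^\beta$ on $(0,\infty)$ for $\beta \geq 0$. This is precisely why the paper labels the lemma routine and omits the proof. The only point worth stating carefully is that one needs the \emph{support} characterization of the convolution in terms of the tensor decomposition, so that the condition $\sigma \in \supp(\delta_{\pi_1} * \delta_{\pi_2})$ translates into $\sigma$ being a genuine constituent of $\pi_1 \otimes \pi_2$; once that translation is made, the dimension bound is immediate.
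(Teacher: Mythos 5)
Your proof is correct and is exactly the routine argument the paper has in mind when it omits the proof: the support of $\delta_{\pi_1}*\delta_{\pi_2}$ consists of the irreducible constituents of $\pi_1\otimes\pi_2$, a constituent has dimension at most $d_{\pi_1}d_{\pi_2}$, and monotonicity of $t\mapsto t^\beta$ for $\beta\geq 0$ together with $h(\pi)=d_\pi^2$ finishes it. No gaps; nothing further to add.
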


  %\begin{eg}\label{eg:SU(2)-growth-weights}
%Let us recall that $\wSU{(2)}$ is actually a polynomial Chebychev hypergroup and therefore as we saw in Example~\ref{eg:growing-weights-on-polynomials},
 %$F=\{\pi_0,\pi_{1}\}$  is a generator for $\wSU{(2)}$.
%Thus, $\tF(\pi_\ell)=\ell$ for  all $\ell>0$
%and $\tF(\pi_0)=0$ where $\tF$ is the map defined in  (\ref{eq:tau}).
%Consequently, for each $\beta\geq 0$, we can define a polynomial weight $\omega_\beta$ on $\wSU{(2)}$ when $
%\omega_\beta(\pi_\ell)=(\ell+1)^\beta$ for  $\pi_\ell\in{\wSU}{(2)}$.
%Note that  $\omega_\beta$ is actually the weight defined in Lemma~\ref{l:dimension-as-a-weight}.
%Also, for each $0\leq \alpha\leq 1$ and $C>0$, we can define an exponential weight $\sigma_{\alpha,C}$ on $\wSU{(2)}$ where
%$\sigma_{\alpha,C}(\ell)=e^{C(\ell)^\alpha}$ for $\ell\in\wSU{(2)}$.
%\end{eg}

In the following, recall $ \sideset{_2}{}\sum$ defined in  (\ref{eq:sum-2}).

\begin{eg}\label{eg:weight-driven-from-Z-weights} {\bf \textsf{(Lifting weights from $\Bbb{Z}$ to $\widehat{\SU}(2)$)}}
Let $\sigma$ be a weight on the group $\Bbb{Z}$.
%=\{\cdots,-2,-\frac{3}{2},-1,-\frac{1}{2},0,+\frac{1}{2},+1,+\frac{3}{2},+2,\cdots\}$
  We define
\begin{equation}\label{eq:omega-sigma}
\omega_\sigma(\pi_\ell) := \frac{1}{\ell+1} \sideset{_2}{}\sum_{r=-\ell}^{\ell} \sigma(r) \ \ \ \ \ (\ell \in \Nat_0).
\end{equation}
Recall that elements of $\wSU{(2)}$ can be regarded as $\pi_\ell$ when $\ell\in \Nat_0$.
Suppose that $m,n\in\Nat_0$ and without loss of generality $n\geq m$. Then,
\begin{eqnarray*}
\omega_\sigma(\pi_m)\omega_\sigma(\pi_n) &=& \frac{1}{m+1}\sideset{_2}{}\sum_{t=-m}^{m} \sigma(t) \; \frac{1}{n+1} \sideset{_2}{}\sum_{s=-n}^{n} \sigma(s)\\
 &\geq&
\frac{1}{(m+1)(n+1)}\sideset{_2}{}\sum_{t=-m}^{m} \sideset{_2}{}\sum_{s=-n}^{n} \sigma(t+s)
%\\
%&=& \frac{1}{(m+1)(n+1)}\sideset{_2}{}\sum_{t=n-m}^{n+m} \sideset{_2}{}\sum_{s=-n}^{n} \sigma(t-n+s)\\
%&=& \frac{1}{(m+1)(n+1)}\sideset{_2}{}\sum_{t=n-m}^{n+m} \sideset{_2}{}\sum_{s=t-2n}^t \sigma(s)
 \ \ \ \   \ \ \ \ \ \ \ \ \ \ \ \ \ (\dagger)\\
&=& \frac{1}{(m+1)(n+1)}\sideset{_2}{}\sum_{t=n-m}^{n+m} \sideset{_2}{}\sum_{s=-t}^t \sigma(s) \ \ \ \ \    \ \ \ \ \ \ \ \ \ \ \ \ \ \ \ (\ddagger)\\
&=& \sideset{_2}{}\sum_{t=n-m}^{n+m } \frac{(t+1)}{(m+1)(n+1)} \left(\frac{1}{t+1} \sideset{_2}{}\sum_{s=-t}^{t} \sigma(s) \right)\\
&=& \sideset{_2}{}\sum_{t=n-m}^{n+m} \frac{(t+1)}{(m+1)(n+1)} \omega_\sigma(\pi_t)\\
&=& \omega_\sigma(\delta_{\pi_m}*\delta_{\pi_n}).
\end{eqnarray*}
To show that the summations  $(\dagger)$ and $(\ddagger)$ are equal, let us arrange $(\dagger)$ as follows.
$$
\begin{array}{l l l l l}
\sigma(-m-n)&+\sigma(-m-n+2)&+\cdots&+\sigma(-m+n-2)&+\sigma(-m+n)\cr
          +\sigma(-m-n+2)&+\sigma(-m-n+4)&+\cdots&+\sigma(-m+n)&+\sigma(-m+n+2)\cr
           \vdots &  \vdots &  \ddots &  \vdots &  \vdots \cr
          +\sigma(m-2)&+\sigma(m)&+\cdots&+\sigma(m+n-4)&+\sigma(m+n-2)\cr
          +\sigma(m)&+\sigma(m+2)&+\cdots&+\sigma(m+n-2)&+\sigma(m+n)\ .\cr
          \end{array}$$
but the sum of  all the entries in the first column and the last row is equal to
$$\sideset{_2}{}\sum_{s=-m-n}^{m+n} \sigma(s)\ .$$
The next column and row give
$$\sideset{_2}{}\sum_{s=-m-n+2}^{m+n-2} \sigma(s)\ ,$$
and so on. So by doing this finitely many times, we get $(\ddagger)$.
Indeed,   weight $\omega_\sigma$ follows from the recipe of Definition~\ref{d:defining-weights-on-^G} using the non-central weight $W$ on $A(\SU(2))$ defined in (\ref{eq:W})  in Appendix~\ref{a:SU(2)}. So instead of the above computations, one also could use Theorem~\ref{t:ZA(G,W)=l^1(^G,w)} to prove that $\omega_\sigma$ is actually a weight on $\widehat{\SU}(2)$.
\end{eg}

\ignore{
\begin{eg}\label{eg:omega_a-on-SU(2)} {\bf \textsf{(Non-central weights on $\widehat{\SU}(2)$)}}
Let us define $\omega_a:\wSU(2)\rightarrow \Bbb{R}^+$ such that $\omega_a(\pi_\ell):={a^{\ell+1}}/({\ell+1})$
for a fixed constant  $a\geq {(\sqrt{5}+1)}/2$. We show that $\omega_a$ is a weight on $\wSU(2)$.
For a pair of $ \ell,\ell'$ in $\Bbb{N}_0:=\{0,1,2,3\ldots\}$, without loss of generality suppose that $\ell\geq \ell'$. So   we have
\begin{eqnarray*}
\sideset{_2}{}\sum_{r=\ell-\ell'}^{\ell+\ell'} (r+1) \omega_a(\pi_r) &=& \sideset{_2}{}\sum_{r=\ell-\ell'}^{\ell+\ell'} a^{r+1} =  a^{\ell-\ell'+1}\; \sideset{_2}{}\sum_{r=0}^{2\ell'} a^{r}\\
&=& a^{\ell-\ell'+1}\; \frac{a^{2\ell'  +2 }-1}{a^2 -1} =  \frac{a^{\ell + \ell' +3}}{a^2-1} -  \frac{a^{\ell - \ell' +1}}{a^2-1}\\
&\leq&  a^{\ell+\ell'+2} \left( \frac{a}{a^2-1}\right)  \leq \frac{a}{a^2 -1}\omega_a(\ell)(\ell+1)\omega_a(\ell')(\ell'+1).
\end{eqnarray*}
But since $a \geq {(\sqrt{5}+1)}/2$, $a/(a^2-1)\leq 1$; therefore, $\omega_a(\delta_{\pi_\ell}*\delta_{\pi_{\ell'}}) \leq \omega_a({\pi_\ell}) \omega_a({\pi_{\ell'}})$.
Note that
${\omega_a(\pi_{2\ell})}/{\omega_a(\pi_{\ell})^2}   \rightarrow \infty$
when $\ell\rightarrow \infty$; while $\pi_{2\ell}\in \supp(\delta_{\pi_\ell}*\delta_{\pi_\ell})$. Hence, not only is $\omega_a$  a non-central weight but also it is  not equivalent to any central weight.
%To show the second claim, let $\omega_z$ be a central weight and $\alpha_1$ and $\alpha_2$ two constants such that $\alpha_1 \omega_a \leq \omega_z \leq \alpha_2\omega_a$. Hence,
% \[
% \frac{\omega_a(\pi_{2\ell})}{\omega_a(\pi_\ell)^2} \leq \frac{\alpha_2^2 \omega_z (\pi_{2\ell})}{\alpha_1\omega_z(\pi_\ell)^2}< \frac{\alpha_2^2}{\alpha_1}
% \]
% which is a contradiction.
\end{eg}

\begin{rem} Let $\sigma(n)=a^n$ for some $a\geq 1$ on $\Bbb{Z}$. Clearly, $\sigma$ is a weight on $\Bbb{Z}$ and therefore, one may consider the weight $\omega_\sigma$ as defined in Example~\ref{eg:weight-driven-from-Z-weights}. For each $\ell\in\Nat_0$ and $a\geq (1+\sqrt{5})/2$,
\begin{eqnarray*}
\omega_\sigma(\pi_\ell)= \frac{1}{\ell+1} \sideset{_2}{}\sum_{r=-\ell}^{\ell} a^r
%  &=& \frac{a^{-2\ell}}{2\ell+1}(1+a^2+\cdots+a^{4\ell})  \\
                         = \frac{a^{-\ell}}{\ell+1}\; \frac{a^{2\ell+2}-1}{a^2-1}       =\omega_a(\pi_\ell)     \; \frac{a^{2\ell+2}-1}{a^{2\ell+1}(a^2-1)},
\end{eqnarray*}
where $\omega_a$  is the weight defined in  Examples~\ref{eg:omega_a-on-SU(2)}.
But   for every $\ell\in \Nat_0$ and $a>1$,
\[
\frac{1}{a+1} \leq  \frac{a^{2\ell+2} - 1}{a^{2\ell+1}(a^2-1)} \leq \frac{a}{a^2-1}.
\]
This implies that   the weights   $\omega_\sigma$ and $\omega_a$,  respectively in Examples~\ref{eg:weight-driven-from-Z-weights} and \ref{eg:omega_a-on-SU(2)}, are  equivalent.
\end{rem}
}

Fix $\beta>0$. One may apply the construction in Example~\ref{eg:weight-driven-from-Z-weights} for 
\begin{equation}\label{eq:sigma-beta}
\sigma(\ell):=\left\{
\begin{array}{l l}
1 & 0 \leq \ell\\
( 1 - \ell)^\beta & \ell < 0
\end{array}
\right.\ \ \ (\ell \in \Bbb{Z})
\end{equation}
  to construct a hypergroup weight $\omega_\sigma$ on $\wSU(2)$. Observe that the weight $\omega_\sigma$ is equivalent to the weight 
$\omega_\beta$
 defined in Lemma~\ref{l:dimension-as-a-weight}.
We will see in Section~\ref{s:Arens-regularity}, that this particular weight   will give interesting classes of examples.
To construct weights from subgroups of compact groups, one can look at \cite[Proposition~4.11]{sp}.

\end{subsection}

\begin{subsection}{Weights on polynomial hypergroups}\label{ss:weights-on-polynomials}

Recall that $\wSU(2)$ is a particular example of polynomial hypergroup so-called \ma{Chebyshev polynomials}. Similar arguments  can be applied to construct hypergroup weights on polynomial hypergroups applying group  weights of $\Bbb{Z}$.

\begin{eg}\label{eg:Chebyshev-weight-Arens}
 Let $f:\Bbb{N}_0 \rightarrow \Bbb{R}^+$ be an increasing function such that $f(0)=1$. 
Then $\omega_f(n)=f(n)+2$  is a  central weight on $\Bbb{N}_0$ when it is equipped with the Chebyshev polynomial hypergroup structure of the first type.
Applying the argument  in Example~\ref{eg:basic-polynomial}, we can see that $\ell^1(\Bbb{N}_0,\omega_f)$ is  isomorphic to the symmetric subalgebra of $A(\Bbb{T},\sigma)$, that is $Z_{\pm 1}A(\Bbb{T},\sigma):=\{f+\check{f}: f\in A(\Bbb{T},\sigma)\}$, for  the group weight
\[
\sigma_f(\ell):=\left\{
\begin{array}{l l}
1 & \ell \geq 0\\
f(-\ell) & \ell<0
\end{array}
\right. \ \ \ (\ell\in \Bbb{Z})
\]
\end{eg}

\end{subsection}
\end{section}

\begin{section}{Arens regularity}\label{s:Arens-regularity}

%\begin{subsection}{General theory}\label{ss:Arens-regularity-general-theory}

In \cite[Chaptetr~4]{kam}, Kamyabi-Gol applied the topological center of  hypergroup algebras to prove some results about the hypergroup algebras and their second duals. For example, in \cite[Corollary~4.27]{kam}, he showed that for a (not necessarily discrete and commutative) hypergroup  $H$ (which possesses a Haar measure $h$), $L^1(H,h)$ is Arens regular if and only if $H$ is  finite.

Arens regularity of weighted group algebras has been studied by Craw and Young in \cite{yo}. They showed that a locally compact group $G$ has a weight $\omega$ such that $L^1(G,\omega)$ is Arens regular if and only if $G$ is discrete and countable.  The monograph \cite{da2} presents a thorough report on Arens regularity of weighted group algebras. In the following we adapt the machinery developed in \cite[Section~8]{da2} for weighted hypergroups. In \cite[Section~3]{da2}, the authors  study repeated limit conditions and give a rich  variety of results for them. Here, we will use some of them.

%This subsection is a brief report of the general theory of Arens regularity of Banach algebras which is a summary of a part of \cite[Chapter~2]{da2}. So all unproven results can be found there.
First  let us recall the following definitions.
Let $\cA$ be a Banach algebra. For  $f,g\in \cA$, $\phi\in \cA^*$,  and $F,G\in \cA^{**}$, we define the following module actions.
%one may define $\phi \cdot f$ and $f \cdot \phi$ in $\cA^*$ by
\[
\begin{array}{l l}
\langle f\cdot \phi, g\rangle:=\langle \phi, gf\rangle,& \langle \phi \cdot f, g\rangle:= \langle \phi, fg\rangle\\
\langle \phi\cdot F, f\rangle := \langle F, f \cdot \phi \rangle, & \langle F\cdot \phi, f\rangle:=\langle F, \phi\cdot f\rangle\\
\langle F\Diamond G,  \phi\rangle:=\langle G, \phi\cdot F\rangle, & \langle  G \Box F, \phi \rangle := \langle G, F \cdot \phi \rangle.
\end{array}
\]
Let  $F,G\in {\mathcal A}^{**}$, and let  $(f_\alpha)_\alpha$ and $(g_\beta)_\beta$ be nets in $\mathcal A$ such that $f_\alpha\rightarrow F$ and $g_\beta \rightarrow G$ in the weak$^*$ topology.
One may show that for products $\Box$ and $\Diamond$ of $\mathcal A^{**}$,
\[
F\Box G = w^*-\lim_\alpha w^*-\lim_\beta f_\alpha g_\beta \ \ \text{and} \ \ \ F\Diamond G=  w^*-\lim_\beta w^*-\lim_\alpha f_\alpha g_\beta.
\]
%\begin{theorem}\label{t:Arens-actions-make-algebras}\cite[Theorem~2.8]{da2}\\
The Banach space $\cA^{**}$ equipped with either of the multiplications $\Box$ or   $\Diamond$ forms a Banach algebra.
%\end{theorem}
%\begin{dfn}\label{d:Arens-regularity}
The Banach algebra $\cA$ is called \ma{Arens regular} if two actions $\Box$ and $\Diamond$ coincide.
%\end{dfn}

%Note that since $\cA$ is a closed subalgebra of $(\cA^{**},\Box)$ and $(\cA^{**},\Diamond)$, by identifying each element of $\cA$ by its image in the second dual, one gets that
%\[
%f\cdot F:= f\Box F=f\Diamond F\ \text{and} \ F\cdot f:= F\Box f= F\Diamond f\ (F\in \cA^{**}, f\in \cA).
%\]

%\vskip1.0em
Let $c_0(H,\omega^{-1}):=\{ f:H\rightarrow \Bbb{C}:\ \ f\omega^{-1}\in c_0(H)\}$.
Note that $\ell^1(H,\omega)$ is the dual of $c_0(H,\omega^{-1})$. Hence, $\ell^1(H,\omega)^{**}$ can be decomposed as $\ell^1(H,\omega)\bigoplus c_0(H,\omega^{-1})^\perp$
when   $c_0(H,\omega^{-1})^\perp:=\{F\in \ell^1(H,\omega)^{**}:\ \langle F,\phi\rangle =0 \; \text{for all $\phi \in c_0(H,\omega^{-1})$}\}$.
To see this decomposition, let $F\in \ell^1(H,\omega)^{**}$, it is clear that $f:=F|_{c_0(H,\omega^{-1})}\in \ell^1(H,\omega)$ and consequently $\Phi:=F-f \in c_0(H,\omega^{-1})^\perp$. Therefore, $F=(f,\Phi) \in \ell^1(H,\omega)\bigoplus c_0(H,\omega^{-1})^\perp$.

\begin{proposition}\label{p:Arens-regularity}
Let $(H,\omega)$ be a weighted hypergroup. Then $\ell^1(H,\omega)$ is Arens regular if  the multiplications $\Box$ and $\Diamond$ restricted  to $c_0(H,\omega^{-1})^{\perp}$  are constantly   $0$.
\end{proposition}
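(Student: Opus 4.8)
The plan is to read off Arens regularity from the decomposition
$\ell^1(H,\omega)^{**}=\ell^1(H,\omega)\oplus c_0(H,\omega^{-1})^\perp$
established just before the statement, together with bilinearity of the two Arens products. Given an arbitrary pair $F,G\in\ell^1(H,\omega)^{**}$, I would write $F=(f,\Phi)$ and $G=(g,\Psi)$ with $f,g\in\ell^1(H,\omega)$ and $\Phi,\Psi\in c_0(H,\omega^{-1})^\perp$, and then expand
\[
F\Box G = f\Box g + f\Box\Psi + \Phi\Box g + \Phi\Box\Psi,
\]
together with the analogous expansion for $\Diamond$, comparing the four terms one at a time.

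The key observation to establish first is that $\ell^1(H,\omega)$ lies in the topological centre of both $(\ell^1(H,\omega)^{**},\Box)$ and $(\ell^1(H,\omega)^{**},\Diamond)$; concretely, that $a\Box G=a\Diamond G$ and $G\Box a=G\Diamond a$ whenever $a\in\ell^1(H,\omega)$ and $G\in\ell^1(H,\omega)^{**}$. This follows directly from the iterated weak$^*$-limit formulas recalled above: representing $a$ by the constant net $(a)$, one of the two iterated limits becomes trivial, so both orders collapse to the same weak$^*$-limit (namely $w^*\text{-}\lim_\beta a\,g_\beta$ in the first case and $w^*\text{-}\lim_\alpha g_\alpha\,a$ in the second). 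Hence $\Box$ and $\Diamond$ agree as soon as at least one factor comes from $\ell^1(H,\omega)$.

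With this in hand the comparison is immediate. The term $f\Box g$ equals $fg=f\Diamond g$ because both Arens products extend the original multiplication of $\ell^1(H,\omega)$; the cross terms satisfy $f\Box\Psi=f\Diamond\Psi$ and $\Phi\Box g=\Phi\Diamond g$ by the topological-centre observation, since in each one factor lies in $\ell^1(H,\omega)$; and the remaining term vanishes for both products, $\Phi\Box\Psi=0=\Phi\Diamond\Psi$, precisely by the hypothesis that $\Box$ and $\Diamond$ restrict to $0$ on $c_0(H,\omega^{-1})^\perp$. Summing the four terms gives $F\Box G=F\Diamond G$ for all $F,G$, which is exactly Arens regularity.

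I do not expect a genuine obstacle here: the argument is purely formal once the decomposition and the iterated-limit description of $\Box,\Diamond$ are in place. The only points demanding a little care are the justification that $\ell^1(H,\omega)$ sits in the topological centre (the collapse of the iterated limits for a constant net) and ensuring the bilinear expansion is applied along genuinely weak$^*$-convergent nets so that the limit formulas remain valid.
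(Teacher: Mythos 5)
Your argument is correct and is essentially the same as the paper's: decompose $F=(f,\Phi)$, $G=(g,\Psi)$ via $\ell^1(H,\omega)^{**}=\ell^1(H,\omega)\oplus c_0(H,\omega^{-1})^\perp$, observe that the cross terms agree because one factor lies in $\ell^1(H,\omega)$, and kill the $\Phi\Box\Psi$ term by hypothesis. Your explicit justification of the cross-term equality via the collapse of one iterated limit for a constant net is a detail the paper merely asserts, but the route is identical.
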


\begin{proof}
Now let $F=(f,\Phi)$ and $G=(g,\Psi)$ belong to $\ell^1(H,\omega)^{**}$.
First, note that   $f\Box \Psi=f\Diamond \Psi$ and  $\Phi \Box g=\Phi \Diamond g$.
%According to the assumption of the proposition, $\Phi \Box \Psi= \Phi \Diamond \Psi=0$.
Thus $F \Box G =  (f, \Phi)\Box (g,\Psi) = (fg, f\Box \Psi+ \Phi \Box g) = (fg, f\Diamond \Psi + \Phi \Diamond g) = F\Diamond G$.
%\begin{eqnarray*}
%F \Box G &=&  (f, \Phi)\Box (g,\Psi)\\
%&=& (fg, f\Box \Psi+ \Phi \Box g)\\
%&=& (fg, f\Diamond \Psi + \Phi \Diamond g) = F\Diamond G.
%\end{eqnarray*}
\end{proof}

%\begin{proof}
%Note that for each $F\in  \cA^{**}$, $
 % \langle F-P(F), \phi \rangle = 0$; $F-P(F) \in \cA_*^\perp$.
% Hence, $\cA^{**}=\cA \oplus \cA_*^\perp$
%as a direct sum of Banach spaces where every $F\in \cA^{**}$ can be decomposed canonically as $(P(F), F-P(F))$. Furthermore, note that for every $F,G\in \cA^{**}$,
%\begin{eqnarray*}
%F\Box G &=&  \left(P(F) + (F-P(F)) \right)  \Box \left(P(G) + (G-P(G)) \right)\\
%&=& P(F) P(G) + P(F) \cdot (G-P(G)) + (F-P(F))\cdot P(G)\\
 %& +&  (F-P(F))\Box (G - P(G)).
%\end{eqnarray*}
%Therefore,
%$(\cA^{**},\Box)$ can be identified as a semidirect product $ \cA\rtimes \cA_*^\perp$ if for  $F=(f,\Phi)$ and $G=(g,\Psi)$ in $\cA^{**}$,
%\begin{equation}\label{eq:Box-action}
%F\Box G = (fg, f\cdot \Psi + \Phi\cdot g + \Psi \Box \Phi ).
%\end{equation}
%Similar argument works for $\Diamond$-action as well; hence,
%$(\cA^{**},\Diamond)= A\rtimes \cA_*^\perp$
%where for each $F\in\cA^{**}$, if $\Phi:=F-P(F)\in \cA_*^\perp$ and $f=P(F)$, $F=(P(F),\Psi)$. Therefore, for each $F=(f,\Phi)$ and $G=(g,\Psi)$ in $\cA^{**}$,
%\begin{equation}\label{eq:Diamond-action}
%F\Diamond G = (fg, f\cdot \Psi + \Phi\cdot g + \Psi \Diamond \Phi ).
%\end{equation}
%Therefore if $\Phi\Box \Psi=\Phi\Diamond \Psi=0$, (\ref{eq:Box-action}) and (\ref{eq:Diamond-action}) finish the proof.
%\end{proof}

%\end{subsection}

%\begin{subsection}{Arens regularity of weighted hypergroup algebras}\label{ss:Arens-of-weighted-hypergroups}

Let us define the bounded function $\Omega_\omega: H\times H \rightarrow (0,1]$ by
\begin{equation}\label{eq:Omega}
\Omega_\omega(x,y):=\frac{\omega(\delta_x*\delta_y)}{\omega(x)\omega(y)}\ \ \ (x,y\in H).
\end{equation}
If there is no risk of confusion, we may use $\Omega$ instead of $\Omega_\omega$.

\vskip1.0em

 For a weighted group $(G,\sigma)$, the Arens regularity of weighted group algebras has been characterized completely; \cite[Theorem~8.11]{da2} proves that it is equivalent to the \ma{$0$-clusterness} of the function $\Omega_\sigma$ on $G\times G$, that is
\[
\lim_n \lim_m \Omega_\sigma(x_m, y_n) = \lim_m \lim_n \Omega_\sigma(x_m, y_n)=0
\]
whenever $(x_m)$ and $(y_n)$ are sequences in $G$, each consisting of distinct points, and both repeated limits exist.
A stronger version of  $0$-clusterness is called \ma{strong  $0$-clusterness} (see \cite[Section~3]{da2}).
We define strongly $0$-cluster functions as presented in   \cite[Definition~3.6]{da2} for  discrete topological spaces.

\begin{dfn}
%Let $X$ and $Y$ be non-empty sets, and let $f:X\times Y \rightarrow \Bbb{C}$ be a function. Then $f$ \ma{$0$-clusters} on $X\times Y$ if
%\[
%\lim_n \lim_m f(x_m,y_n) = \lim_m \lim_n f(x_m,y_n) = 0
%\]
%whenever $(x_m)$ and $(y_n)$ are sequences in $X$ and $Y$, respectively, each consisting of distinct points and both repeated limits exist.\\
Let $X$ and $Y$ be two sets and   $f$ is a bounded   function on $X\times Y$ into $\Bbb{C}$. Then  $f$ \ma{$0$-clusters strongly} on $X\times Y$ if
\[
\lim_{x\rightarrow\infty} \limsup_{y\rightarrow\infty} f(x,y) = \lim_{y\rightarrow\infty} \limsup_{x\rightarrow \infty} f(x,y) = 0.
\]
\end{dfn}

%\begin{proposition}\label{p:net-to-seq}\cite[Proposition~3.1]{da2}\\
%Let $S$ and $T$ be non-empty sets, and $f:S\times T\rightarrow \Bbb{C}$ a function. Suppose that $(s_\alpha)_\alpha\subseteq S$ and $(t_\beta)_\beta\subseteq T$ are nets such that $a=\lim_\alpha\lim_\beta f(s_\alpha,t_\beta)$ and $b=\lim_\beta \lim_\alpha f(s_\alpha, t_\beta)$ both exist. Then there are subsequences $(s_{\alpha_n})_{n\in \Nat}$ and $(t_{\beta_m})_{m\in \Nat}$ of   $(s_\alpha)_\alpha\subseteq S$ and $(t_\beta)_\beta\subseteq T$ such that $a=\lim_n\lim_m f(s_{\alpha_n},t_{\beta_m})$ and $b=\lim_m \lim_n f(s_{\alpha_n}, t_{\beta_m})$.
%\end{proposition}

\vskip1.5em
Let us define   Banach space isomorphism $\kappa: \ell^1(H,\omega)\rightarrow \ell^1(H)$ where $\kappa(f)=f\omega$ for each $f\in \ell^1(H,\omega)$.
Note that for $\kappa^{**}:\ell^1(H,\omega)^{**}\rightarrow \ell^1(H)^{**}$ and $\Phi\in c_0(H,\omega)^\perp$, one gets
$
 \langle \kappa^{**}(\Phi), \phi\rangle = \langle \Phi, \kappa^*(\phi)\rangle$
 which is   $0$ for all $\phi\in c_0(H)$. Therefore $\kappa^{**}(\Phi)\in c_0(H)^\perp$. The converse (which we do not use here) is also true and straightforward to show.

\vskip1.0em

The following theorem is a generalization of \cite[Theorem~8.8]{da2}. In the proof we use some techniques of
the proof of \cite[Theorem~3.16]{LeSa}.

\begin{theorem}\label{t:Box-product-is-zero}
Let $(H,\omega)$ be a weighted hypergroup and let $\Omega$  $0$-cluster strongly on $H\times H$. Then $\Phi\Box \Psi=0$ and $\Phi\Diamond \Psi=0$  whenever $\Phi,\Psi\in c_0(H,1/\omega)^{\perp}$.
\end{theorem}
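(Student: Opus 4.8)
The plan is to prove both identities simultaneously by pairing $\Phi\Box\Psi$ and $\Phi\Diamond\Psi$ with an arbitrary element of $\ell^1(H,\omega)^*$ and showing that the resulting repeated limits vanish. Recall that $\ell^1(H,\omega)^*$ is isometrically the space of functions $h$ on $H$ with $\norm{h}_\infty:=\sup_{z\in H}|h(z)|/\omega(z)<\infty$, the pairing being $\langle f,h\rangle=\sum_{z\in H}f(z)h(z)$. After rescaling we may assume $\norm{\Phi}=\norm{\Psi}=1$, and by Goldstine's theorem we choose nets $(f_\alpha)$, $(g_\beta)$ in the unit ball of $\ell^1(H,\omega)$ with $f_\alpha\to\Phi$ and $g_\beta\to\Psi$ in the weak$^*$ topology. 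Then for every such $h$ we have $\langle\Phi\Box\Psi,h\rangle=\lim_\alpha\lim_\beta\langle f_\alpha*g_\beta,h\rangle$ and $\langle\Phi\Diamond\Psi,h\rangle=\lim_\beta\lim_\alpha\langle f_\alpha*g_\beta,h\rangle$, so it suffices to control these two repeated limits as $h$ ranges over $\ell^1(H,\omega)^*$.

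Two ingredients feed the estimate. First, since $\Phi,\Psi\in c_0(H,1/\omega)^{\perp}$, testing against the point functional $\omega(x_0)\delta_{x_0}\in c_0(H,1/\omega)$ gives $f_\alpha(x_0)\omega(x_0)\to 0$ and $g_\beta(x_0)\omega(x_0)\to 0$ for each fixed $x_0$; hence the subprobability measures $\mu_\alpha:=|f_\alpha|\,\omega$ and $\nu_\beta:=|g_\beta|\,\omega$ \emph{escape to infinity}, i.e.\ $\mu_\alpha(K)\to 0$ and $\nu_\beta(K)\to 0$ for every finite $K\subseteq H$. Second, because $\delta_x*\delta_y$ is a positive measure and $\omega(\delta_x*\delta_y)=\Omega(x,y)\,\omega(x)\omega(y)$, we obtain the pointwise bound $|\langle\delta_x*\delta_y,h\rangle|\le\norm{h}_\infty\sum_z(\delta_x*\delta_y)(z)\omega(z)=\norm{h}_\infty\,\Omega(x,y)\,\omega(x)\omega(y)$. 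Expanding $f_\alpha*g_\beta$ bilinearly and summing yields $|\langle f_\alpha*g_\beta,h\rangle|\le\norm{h}_\infty\,S(\alpha,\beta)$, where $S(\alpha,\beta):=\int_H\int_H\Omega(x,y)\,d\mu_\alpha(x)\,d\nu_\beta(y)$.

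It therefore remains to show $\lim_\alpha\lim_\beta S(\alpha,\beta)=0$ and $\lim_\beta\lim_\alpha S(\alpha,\beta)=0$. For the first, fix $\alpha$ and vary $\beta$: since $\nu_\beta$ is a subprobability measure escaping to infinity and $\Omega\le 1$, for every finite $K$ we have $\int\Omega(x,y)\,d\nu_\beta(y)\le\sup_{y\notin K}\Omega(x,y)+\nu_\beta(K)$, whence $\lim_\beta\int\Omega(x,\cdot)\,d\nu_\beta\le\limsup_{y\to\infty}\Omega(x,y)=:c(x)$. Integrating against $\mu_\alpha$ and then applying the same escaping argument in the $x$ variable gives $\lim_\alpha\lim_\beta S(\alpha,\beta)\le\lim_{x\to\infty}c(x)=\lim_{x\to\infty}\limsup_{y\to\infty}\Omega(x,y)=0$ by the strong $0$-clusterness of $\Omega$. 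Reversing the roles of $x$ and $y$ and invoking the other equality $\lim_{y\to\infty}\limsup_{x\to\infty}\Omega(x,y)=0$ disposes of $\Phi\Diamond\Psi$. As $h$ was an arbitrary element of $\ell^1(H,\omega)^*$, both products vanish in $\ell^1(H,\omega)^{**}$.

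The two ingredients of the second paragraph are routine; the delicate point—and the one place where the hypotheses are genuinely used—is the passage from the net-indexed repeated limits $\lim_\alpha\lim_\beta$ to the point-indexed iterated limit $\lim_{x\to\infty}\limsup_{y\to\infty}$ of the definition of strong $0$-clusterness. This demands the repeated-limit bookkeeping of \cite[Section~3]{da2}: ensuring the iterated limits exist (after passing to subnets), and controlling the inner $\limsup_{y\to\infty}\Omega(x,y)$, which is not uniform in $x$ but is itself forced to $0$ as $x\to\infty$. I would cite those lemmas rather than reprove them, the only hypergroup-specific input being the positivity of $\delta_x*\delta_y$ and the identity $\omega(\delta_x*\delta_y)=\Omega(x,y)\omega(x)\omega(y)$ that underlies the key estimate.
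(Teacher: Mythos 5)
Your proposal is correct and follows essentially the same route as the paper: Goldstine nets, the bound $|\langle f_\alpha * g_\beta, h\rangle| \le \norm{h}_\infty \sum_{x,y} |f_\alpha(x)|\omega(x)|g_\beta(y)|\omega(y)\,\Omega(x,y)$ coming from positivity of $\delta_x*\delta_y$, the pointwise vanishing $f_\alpha(x)\to 0$ forced by $\Phi \in c_0(H,1/\omega)^\perp$, and strong $0$-clusterness to kill the iterated limits. The only difference is presentational — the paper carries out your ``repeated-limit bookkeeping'' explicitly via a three-way split of the double sum over finite sets $A$ and $B_x$ rather than citing \cite[Section~3]{da2}, and it transports the nets to $\ell^1(H)$ via the map $\kappa$ instead of working in the unit ball of $\ell^1(H,\omega)$ directly.
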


\begin{proof}
Let us show the theorem for $\Phi\Box \Psi$, the proof for the other action is  similar. Let $\Phi,\Psi\in c_0(H,1/\omega)^\perp$. By   Goldstine's theorem, there are nets $(f_\alpha)_\alpha, (g_\beta)_\beta \subseteq \ell^1(H)$  such that $f_\alpha \rightarrow \kappa^{**}(\Phi)$ and $g_\beta \rightarrow \kappa^{**}(\Psi)$ in the weak$^*$ topology of $\ell^1(H)^{**}$ while  $\sup_\alpha\norm{f_\alpha}_1\leq 1$ and $\sup_\beta\norm{g_\beta}_1\leq 1$.
%But $c_c(H)$ is dense in $\ell^1(H)$; hence, one may assume that   $(f_\alpha)_\alpha, (g_\beta)_\beta \subseteq c_c(H)$ where $\sup_\alpha \norm{f_\alpha}_1\leq 3/2$.
So for each $\psi\in \ell^\infty(H)$ and $\Phi, \Psi\in \ell^1(H,\omega)^{**}$,
\begin{eqnarray*}
\langle\psi\omega, \kappa^{**}(\Phi\Box \Psi)\rangle= \langle \kappa^*(\psi), \Phi\Box \Psi \rangle=  \lim_\alpha  \lim_\beta \langle \psi\omega, \kappa^{-1}(f_\alpha) * \kappa^{-1}(g_\beta) \rangle
=  \lim_\alpha  \lim_\beta \langle \psi\omega, f_\alpha/\omega * g_\beta/\omega \rangle.
%&=& \lim_\alpha \lim_\beta   \sum_{x,z\in H} \frac{f_\alpha(x)}{\omega(x)} \frac{g_\beta(z)}{\omega(z)} \sum_{y\in H} \psi(y) \omega(y) \delta_x*\delta_z(y).
\end{eqnarray*}
%For some $\psi$,   we may use
   %Proposition~\ref{p:net-to-seq}
%   \cite[Proposition~3.1]{da2} to find two subsequences $(f_m)_m$ and $(g_n)_n$ of $(f_\alpha)_\alpha$ and $(g_\beta)_\beta$, respectively, such that $\langle\psi\omega, \Phi\Box \Psi\rangle = \lim_m \lim_n \langle \psi\omega, f_m/\omega * g_n/\omega\rangle$.
   %Since $f_n$ weakly$^*$ converges to $\Phi$, it is norm bounded, by  \cite[Theorem~V.1.10]{yos}. Let $D=\sup_n \norm{f_n}_1$.
Thus
\begin{eqnarray*}
|\langle\psi\omega, \kappa^{**}(\Phi\Box \Psi)\rangle|
 &=& \lim_\alpha \lim_\beta |\langle \psi\omega, f_\alpha/\omega * g_\beta/\omega\rangle| \\
%&=& \lim_m \lim_n \langle \psi\omega, \Theta^{-1}(\Theta(f_m/\omega) *_h \Theta(g_n/\omega)) \rangle\\
&=& \lim_\alpha \lim_\beta  \left|\sum_{y\in H} \psi(y) \omega(y) \sum_{x,z\in H} \frac{f_\alpha(x)}{\omega(x)} \frac{g_\beta(z)}{\omega(z)} \delta_x*\delta_z(y)\right|\\
&\leq & \limsup_\alpha \limsup_\beta   \sum_{x,z\in H} \frac{|f_\alpha(x)|}{\omega(x)} \frac{|g_\beta(z)|}{\omega(z)} \sum_{y\in H} |\psi(y)| \omega(y) \delta_x*\delta_z(y)\\
&\leq & \norm{\psi}_{\ell^\infty(H)}\; \limsup_\alpha \limsup_\beta    \sum_{x,z\in H} |f_\alpha(x)||g_\beta(z)| \sum_{y\in H}  \frac{\omega(y)}{\omega(x)\omega(z)} \delta_x*\delta_z(y)\\
&=&\norm{\psi}_{\ell^\infty(H)}\;  \limsup_\alpha \limsup_\beta   \sum_{x,z\in H} |f_\alpha(x)||g_\beta(z)|  \Omega(x,z).
\end{eqnarray*}

For a given $\epsilon>0$, since by the hypothesis $\lim_{x}\limsup_{z}\Omega(x,z)=0$, there is a finite set $A\subseteq H$ such that for each $x\in A^c( =H\setminus A)$ there exists a finite set $B_{x}\subseteq H$ such that for each $z\in B_{x}^c:=H\setminus B$, $|\Omega(x,z)|\leq \epsilon$. %Let us fix $x_0\in H\setminus A$ and consequently $B$.
First note that
\begin{eqnarray*}
\limsup_\alpha \limsup_\beta    \sum_{x\in A^c}\sum_{z\in B_x^c} |f_\alpha(x)||g_\beta(z)|  \Omega(x,z)\leq \limsup_\alpha \limsup_\beta \epsilon \norm{f_\alpha}_1 \norm{g_\beta}_1 \leq \epsilon.
\end{eqnarray*}
%Define $K_\alpha:=\supp f_\alpha$ and $L_\beta:=\supp g_\beta$ for all $\alpha,\beta$; note that sets $K_m$ and $L_n$ are finite for all $m,n\in \Bbb{N}$.
Also according to our assumption about $\Phi$ and $\Psi$ and since for each  $x\in H$, $\delta_x\in c_0(H,1/\omega)$, $\lim_\alpha f_\alpha(x) =0$ and  $\lim_\beta g_\beta(x) =0$.
So for the given $\epsilon>0$, there is  $\alpha_0$  such that for all $\alpha_0 \preccurlyeq \alpha$, $|f_\alpha(x)|<\epsilon/|A|$ for all $x\in A$. Moreover, for each $x\in A^c$ there is some $\beta_0^x$ such that for all $\beta$ where $\beta_0^x \preccurlyeq \beta$, $|g_\beta(z)|<\epsilon/|B_x|$ for all $z\in B_x$ (this is possible since $A$ and $B_x$ are finite). Therefore, since $|\Omega(x,z)|\leq 1$,
\begin{eqnarray*}
\limsup_\alpha \limsup_\beta    \sum_{x\in A}\sum_{z\in H} |f_\alpha(x)||g_\beta(z)|  \Omega(x,z)\leq  \limsup_\beta  \epsilon \norm{g_\beta}_1 = \epsilon
\end{eqnarray*}
and
\begin{eqnarray*}
\limsup_\alpha \limsup_\beta    \sum_{x\in A^c}\sum_{z\in B_x} |f_\alpha(x)||g_\beta(z)|  \Omega(x,z)
&\leq&
\limsup_\alpha   \sum_{x\in A^c} |f_\alpha(x)| \limsup_\beta  \sum_{z\in B_x} |g_\beta(z)|\\
&\leq&
  \limsup_\alpha  \epsilon \norm{f_\alpha}_1 = \epsilon.
\end{eqnarray*}
But
\begin{eqnarray*}
\sum_{x,z\in H} |f_\alpha(x)||g_\beta(z)|  \Omega(x,z) &=&
 \sum_{x\in A^c, z\in B_x^c} |f_\alpha(x)||g_\beta(z)|  \Omega(x,z)\\
 &+& \sum_{x\in A, z\in H} |f_\alpha(x)||g_\beta(z)|  \Omega(x,z)\\
&+&\sum_{x\in A^c, z\in B_x} |f_\alpha(x)||g_\beta(z)|  \Omega(x,z),
\end{eqnarray*}
and so, one gets that $|\langle\psi\omega, \kappa^{**}(\Phi\Box \Psi)\rangle| \leq 3 \epsilon \norm{\psi}_\infty$. Since $\epsilon>0$ was arbitrary, this proves the claim of the theorem.
  \end{proof}

\begin{theorem}\label{t:Arens-regularity-summary}
Let $(H, \omega)$ be a discrete weighted hypergroup and consider the following conditions:
\begin{enumerate}
\item[$(1)$]{ $\Omega$ $0$-clusters strongly on $H\times H$.}
\item[$(2)$]{ $\Phi \Box \Psi = \Phi \Diamond \Psi =0 $ for all $\Phi, \Psi \in c_0(H, 1/\omega)^{\perp}$.}
\item[$(3)$]{ $\ell^1(H,\omega)$ is Arens regular.}
\end{enumerate}
Then $(1) \Rightarrow (2) \Rightarrow (3)$.
\end{theorem}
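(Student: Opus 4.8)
The plan is to recognize that this theorem merely packages together two results already proved earlier in the section, so that essentially no new work is required. I would establish the two implications separately.

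For $(1) \Rightarrow (2)$, I would simply invoke Theorem~\ref{t:Box-product-is-zero}. Its hypothesis is precisely condition $(1)$, namely that $\Omega$ $0$-clusters strongly on $H \times H$, and its conclusion is precisely condition $(2)$, namely that $\Phi \Box \Psi = 0$ and $\Phi \Diamond \Psi = 0$ for all $\Phi, \Psi \in c_0(H, 1/\omega)^{\perp}$. Thus this implication holds verbatim, with nothing further to verify.

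For $(2) \Rightarrow (3)$, I would apply Proposition~\ref{p:Arens-regularity}. That proposition asserts that $\ell^1(H, \omega)$ is Arens regular provided the products $\Box$ and $\Diamond$ restricted to $c_0(H, \omega^{-1})^{\perp}$ vanish identically. Since $1/\omega$ and $\omega^{-1}$ denote the same function, the spaces $c_0(H, 1/\omega)^{\perp}$ and $c_0(H, \omega^{-1})^{\perp}$ coincide, so condition $(2)$ is exactly the hypothesis of Proposition~\ref{p:Arens-regularity}. The conclusion then yields condition $(3)$.

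The only point requiring attention is the purely notational check that the two annihilator spaces appearing in Theorem~\ref{t:Box-product-is-zero} and in Proposition~\ref{p:Arens-regularity} are the same, which is immediate from $1/\omega = \omega^{-1}$. There is no genuine obstacle here: the substantive analytic content—the repeated-limit estimate driving the vanishing of the $\Box$ and $\Diamond$ products—was already carried out in the proof of Theorem~\ref{t:Box-product-is-zero}, and the decomposition $\ell^1(H,\omega)^{**} = \ell^1(H,\omega) \bigoplus c_0(H,\omega^{-1})^{\perp}$ underlying Proposition~\ref{p:Arens-regularity} was established when that proposition was proved. Consequently I expect the proof of this summary theorem to consist of two one-line citations.
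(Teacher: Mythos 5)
Your proposal is correct and matches the paper's own proof exactly: the paper also dispatches $(1)\Rightarrow(2)$ by citing Theorem~\ref{t:Box-product-is-zero} and $(2)\Rightarrow(3)$ by citing Proposition~\ref{p:Arens-regularity}, with no additional argument. The notational identification of $c_0(H,1/\omega)^{\perp}$ with $c_0(H,\omega^{-1})^{\perp}$ that you flag is indeed the only thing to check, and it is immediate.
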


\begin{proof}
$(1) \Rightarrow (2)$ by Theorem~\ref{t:Box-product-is-zero}. $(2) \Rightarrow (3)$ is implied from Proposition~\ref{p:Arens-regularity}.
\end{proof}

\begin{rem}
Since in hypergroups, the cancellation does not necessarily exist, the argument of \cite[Theorem~1]{yo} cannot be applied to show $(3)$ implies $(1)$.
\end{rem}

\begin{eg}\label{eg:Chebyshev-Arens}
Let $\Nat_0$ be equipped with Chebyshev polynomial hypergroup structure of the first type and $\sigma_f$ be the group weight defined in Example~\ref{eg:Chebyshev-weight-Arens} for an increasing function $f$. One can easily check that if $\lim_{n,m}  f(n+m)/{f(n)f(m)}=0$,
then $\Omega_{\omega_f}$ $0$-clusters strongly on $\Nat_0\times \Nat_0$; hence, $\ell^1(\Nat_0,\omega_f)$ is Arens regular. Indeed,  $Z_{\pm}A(\Bbb{T},\sigma_f)$ is Arens regular. But note that $A(\Bbb{T},\sigma_f)$ (which is isomorphic to $\ell^1(\Bbb{Z},\sigma_f)$ through the Fourier transform) is not Arens regular, as $\Omega_{\sigma_f}$ does not $0$-cluster strongly on $\Bbb{Z}\times \Bbb{Z}$ (see \cite[Theorem~8.11]{da2}).
\end{eg}

%\begin{eg}
%Every finitely generated hypergroup $H$ has at least one weight $\omega$ such that $\ell^1(H,\omega)$ is Arens regular.
%\end{eg}

%\begin{proof}
%In Subsection~\ref{ss:growth-of-hypergroups}, we defined polynomial and exponential weight on finitely generated hypergroups. One can easily show that $
%\end{proof}

%\begin{dfn}
% Let $(H,\omega)$ be a weighted discrete hypergroup.
% Then $\omega$ is \ma{almost invariant} if
% \[
% \lim_{t\rightarrow\infty} \sup_{s\in K} \left| \frac{\omega(\delta_t* \delta_s)}{\omega(t)} -1 \right| =
% \lim_{t\rightarrow\infty} \sup_{s\in K} \left| \frac{\omega(\delta_s* \delta_t)}{\omega(t)} -1 \right| = 0
% \]
%for each compact subset $K$ of $H$.
%\end{dfn}

%Clearly, for every weight $\omega$ on discrete hypergroup $H$, if $\omega$ is almost invariant and $1/\omega \in c_0(H)$, $\Omega$ $0$-clusters on $H\times H$, and therefore, $\ell^1(H,\omega)$ is Arens regular.  Let us recall the definition of weakly additive weights from Definition~\ref{d:subadditive-weights}. $\omega$ is weakly additive if $\omega(\delta_x*\delta_y)\leq C(\omega(x) +\omega(y))$ for some $C>0$ for all $x,y\in H$.

\begin{cor}\label{c:Arens-subadditive}
Let $(H,\omega)$ be a weighted discrete hypergroup such that $\omega$ is a weakly additive weight. If $1/\omega \in c_0(H)$, then $\ell^1(H,\omega)$ is Arens regular.
\end{cor}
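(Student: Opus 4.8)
The plan is to deduce the result directly from Theorem~\ref{t:Arens-regularity-summary}: it suffices to verify that the function $\Omega_\omega$ of (\ref{eq:Omega}) $0$-clusters strongly on $H\times H$, since then the chain $(1)\Rightarrow(2)\Rightarrow(3)$ yields Arens regularity of $\ell^1(H,\omega)$. So the whole task reduces to checking the two iterated-limit conditions in the definition of strong $0$-clustering, and the weak additivity hypothesis is precisely what converts the multiplicative structure of $\Omega_\omega$ into an additive one that the $c_0$ assumption can kill.

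First I would use weak additivity to obtain a clean pointwise majorant for $\Omega_\omega$. By hypothesis there is $C>0$ with $\omega(\delta_x*\delta_y)\leq C(\omega(x)+\omega(y))$, so for all $x,y\in H$
\[
\Omega_\omega(x,y)=\frac{\omega(\delta_x*\delta_y)}{\omega(x)\omega(y)}\leq \frac{C(\omega(x)+\omega(y))}{\omega(x)\omega(y)} = C\left(\frac{1}{\omega(x)}+\frac{1}{\omega(y)}\right).
\]
This separates the two variables, which is exactly the form needed for the iterated limits.

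Next I would invoke the assumption $1/\omega\in c_0(H)$, which on the discrete space $H$ means precisely that $1/\omega(x)\to 0$ along the filter of cofinite sets, i.e. $\tfrac{1}{\omega(x)}\to 0$ as $x\to\infty$ and likewise $\tfrac{1}{\omega(y)}\to 0$ as $y\to\infty$. Taking the inner $\limsup$ over $y$ in the displayed bound and then the limit over $x$ gives
\[
\lim_{x\to\infty}\limsup_{y\to\infty}\Omega_\omega(x,y)\leq \lim_{x\to\infty}\left(\frac{C}{\omega(x)}+C\limsup_{y\to\infty}\frac{1}{\omega(y)}\right)=0,
\]
and the symmetric computation yields $\lim_{y\to\infty}\limsup_{x\to\infty}\Omega_\omega(x,y)=0$. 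Hence $\Omega_\omega$ $0$-clusters strongly on $H\times H$, and condition $(1)$ of Theorem~\ref{t:Arens-regularity-summary} holds, completing the proof.

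There is no genuine obstacle here; the content is entirely in recognizing that weak additivity is the right hypothesis to linearize $\Omega_\omega$ and that the $c_0$ condition on $1/\omega$ makes both separated terms vanish uniformly in the iterated limits. The only point demanding a little care is interpreting the symbols $\lim_{x\to\infty}$ and $\limsup_{y\to\infty}$ on a discrete set as limits along cofinite filters, so that $1/\omega\in c_0(H)$ indeed forces $1/\omega(x)\to 0$; once that identification is made, the estimate above is immediate.
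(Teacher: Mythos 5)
Your proof is correct and is essentially identical to the paper's own argument: both bound $\Omega_\omega(x,y)$ by $C\bigl(\tfrac{1}{\omega(x)}+\tfrac{1}{\omega(y)}\bigr)$ via weak additivity, use $1/\omega\in c_0(H)$ to make both iterated limits vanish, and then invoke Theorem~\ref{t:Arens-regularity-summary}. No discrepancies to report.
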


\begin{proof}
We have
\begin{eqnarray*}
\lim_{x\rightarrow \infty}\limsup_{y\rightarrow\infty} \frac{\omega(\delta_{x}*\delta_{y})}{\omega(x)\omega(y)}
&\leq &  \limsup_{x\rightarrow \infty}\limsup_{y\rightarrow\infty} C \frac{\omega(x)+ \omega(y)}{\omega(x)\omega(y)}\\
&=& C \limsup_{x\rightarrow \infty}\limsup_{y\rightarrow\infty} \frac{1}{\omega(x)}+\frac{1}{\omega(y)}=0.
\end{eqnarray*}
Therefore $\Omega$ $0$-clusters strongly on $H\times H$ and hence $\ell^1(H,\omega)$ is Arens regular by Theorem~\ref{t:Arens-regularity-summary}.
\end{proof}

\begin{corollary}\label{c:Arens-of-finite-generated}
Let $H$ be a  finitely generated hypergroup. Then for each polynomial weight $\omega_\beta$ ($\beta>0$)  on $H$ defined in Definition~\ref{d:polynomial-exponential},  $\ell^1(H,\omega_\beta)$ is Arens regular.
\end{corollary}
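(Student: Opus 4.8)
The plan is to reduce everything to Corollary~\ref{c:Arens-subadditive}, which guarantees that $\ell^1(H,\omega)$ is Arens regular as soon as $\omega$ is a weakly additive weight with $1/\omega \in c_0(H)$. So it suffices to check that each polynomial weight $\omega_\beta(x) = (1+\tau(x))^\beta$ with $\beta>0$ (Definition~\ref{d:polynomial-exponential}) satisfies these two hypotheses, where $\tau = \tau_F$ is the length function (\ref{eq:tau}) attached to a fixed finite symmetric generator $F$ of $H$.

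First I would record the subadditivity of $\tau$ on supports: if $x\in F^{*n}$ and $y\in F^{*m}$, then $\supp(\delta_x*\delta_y)\subseteq F^{*(n+m)}$, so that $\tau(t)\leq \tau(x)+\tau(y)$ for every $t\in\supp(\delta_x*\delta_y)$; this is the very estimate that makes $\omega_\beta$ central in Definition~\ref{d:polynomial-exponential}. To get \emph{weak additivity}, note that $\delta_x*\delta_y$ is a probability measure by (H1), so $\omega_\beta(\delta_x*\delta_y)=\sum_t (\delta_x*\delta_y)(t)\,(1+\tau(t))^\beta \leq (1+\tau(x)+\tau(y))^\beta$. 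Assuming without loss of generality that $\tau(x)\geq\tau(y)$, the elementary bound $1+\tau(x)+\tau(y)\leq 2(1+\tau(x))$ gives $(1+\tau(x)+\tau(y))^\beta \leq 2^\beta(1+\tau(x))^\beta \leq 2^\beta(\omega_\beta(x)+\omega_\beta(y))$. Hence $\omega_\beta$ is weakly additive with constant $C=2^\beta$.

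Next I would verify that $1/\omega_\beta\in c_0(H)$. By (H1) every convolution of point masses is finitely supported, so each set $F^{*n}$ is finite; consequently, for every $R>0$ the sublevel set $\{x\in H:\tau(x)\leq R\}$ lies in a finite union of the $F^{*n}$ and is therefore finite. Since $\beta>0$, this forces $\omega_\beta(x)=(1+\tau(x))^\beta\to\infty$ as $x\to\infty$, i.e. $1/\omega_\beta$ vanishes at infinity. With both hypotheses in place, Corollary~\ref{c:Arens-subadditive} immediately yields that $\ell^1(H,\omega_\beta)$ is Arens regular.

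There is no deep obstacle here — the statement is a direct corollary of the weakly additive case. The only points demanding care are the clean verification of weak additivity (the averaging of $\omega_\beta$ over $\supp(\delta_x*\delta_y)$ as a probability measure, together with the inequality $1+a+b\leq 2(1+\max(a,b))$ valid for all $\beta>0$) and the observation that finite generation plus the finite-support axiom (H1) force the $\tau$-sublevel sets to be finite. Once these are established, the conclusion follows at once.
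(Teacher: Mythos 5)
Your proof is correct and follows essentially the same route as the paper: reduce to Corollary~\ref{c:Arens-subadditive} by checking that $\omega_\beta$ is weakly additive and that $1/\omega_\beta\in c_0(H)$ via the finiteness of the sets $F^{*n}$. The only difference is that you actually verify the weak additivity (with constant $2^\beta$, which is fine) whereas the paper merely recalls it with the constant $\min\{1,2^{\beta-1}\}$ — which appears to be a typo for $\max\{1,2^{\beta-1}\}$, so your explicit derivation is if anything a small improvement.
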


\begin{proof}
Here we only need to prove the case for an infinite hypergroup $H$.
Let $F$ be a finite generator of the hypergroup $H$ containing the identity of $H$ rendering the central weight $\omega_\beta$. Recall that $\omega_\beta$ is  weakly additive with constant $C=\min\{1,2^{\beta-1}\}$. Moreover, for each $N\in \Bbb{N}$,  for   $x\in H\setminus F^{*N}$, $\tF(x)\geq N$; hence,
$\omega_\beta(x)=(1+\tF(x))^\beta \geq (1+N)^\beta$.
Therefore, $1/\omega_\beta \in c_0(H)$. Subsequently,   $\ell^1(H,\omega_\beta)$ is Arens regular, by Corollary~\ref{c:Arens-subadditive}.
\end{proof}

\begin{rem}\label{r:central-screws}
Every  finitely generated hypergroup $H$ admits a weight  for which the corresponding  weighted algebra is Arens regular. On the other hand, an argument similar to \cite[Corollary~1]{yo} may apply to show that for every uncountable discrete hypergroup $H$, $H$ does not have any  weight $\omega$ which $0$-clusters strongly.
%But since in Theorem~\ref{t:Arens-regularity-summary}, the equality of $(1)$ and $(3)$ has not been proven, it does not imply any result for Arens regularity of $\ell^1(H,\omega)$, necessarily.
\end{rem}

\begin{eg}\label{eg:dual-weights-Arens-regular-but-not-the Beurling-Fourier-algebra}
Let $\omega_\beta$ be as defined in Lemma~\ref{l:dimension-as-a-weight} for some $\beta\geq 0$. Then     $\Omega_{\omega_\beta}$ also $0$-clusters strongly on $\wSU(2) \times \wSU(2)$. Therefore,  $\ell^1(\wSU(2),\omega_\beta)$, which is isometrically Banach algebra isomorphic to $ZA(\SU(2), \omega_\beta)$, is Arens regular. On the other hand, $A(\SU(2),\omega_\beta)$ is not Arens regular if $\beta>0$. To observe the later fact, first note that by applying  \cite{yo},  we obtain that $\ell^1(\Bbb{Z},\sigma)$ is not Arens regular for  $\sigma$ defined in (\ref{eq:sigma-beta}). 
 Therefore, $A(\Bbb{T},\sigma)$ is not Arens regular. Note that, $\omega_\beta$ can also  be rendered using the weight $\sigma$ through the argument of the last paragraph of Subsection~\ref{s:weights-on-^G}. 
For the dual spaces $VN(\Bbb{T},\sigma)$ and $VN(\SU(2), \omega_\beta)$, one may verify that $VN(\Bbb{T},\sigma)$ embeds $*$-weakly in $VN(\SU(2),\omega_\beta)$ (the details of this embedding will appear in a manuscript by the second named author and et al). Hence, $A(\Bbb{T},\sigma)$ is a quotient  of $A(\SU(2),\omega_\beta)$ and consequently $A(\SU(2),\omega_\beta)$ is not Arens regular.
\end{eg}

%\end{subsection}

%\begin{subsection}{Arens regularity of weighted hypergroup algebra of polynomial hypergroups}

%\end{subsection}

%\begin{subsection}{Arens regularity of weighted hypergroup algebra of $\conj(G)$}\label{ss:Arens-regularity-of-Conj(G)}

%\end{subsection}

%\vskip2.0em

%%%%%%%%%%%%%%%%%%%%%%%%%%%%%%%%%%%%%%%%%%%%%%%%%%%%%%%%%%

%%%%%%%%%%%%%%%%%%%%%%%%%%%%%%%%%%%%%%%%%%%%%%%%%%%%%%%%%%%%%%%%%%%%55
%\begin{subsection}{Arens regularity of $\ell^1(\wSU(n),\omega)$}\label{ss:Arens-of-SU(n)}

In the following, we  generalize some results on $\SU(2)$ to all $\SU(n)$'s, the group of all $n\times n$ special unitary matrices on $\Bbb{C}$,  based on a recent study on the representation theory of $\SU(n)$, \cite{lee1}.
As an example for Lemma~\ref{l:dimension-as-a-weight}, $(\wSU(n),\omega_\beta)$ is a discrete commutative hypergroup where $\omega_\beta(\pi)=d_\pi^\beta$ for some $\beta\geq 0$. See \cite{ful} for the details of representation theory of $\SU(n)$.
There is a one-to-one correspondence between $\wSU(n)$ and $n$-tuples $(\pi_1,\ldots,\pi_n)\in \Nat_0^n$ such that
$\pi_1 \geq \pi_2 \geq \cdots \geq \pi_{n-1} \geq \pi_n=0$.
This presentation of the representation theory of $\SU(n)$ is called \ma{dominant weight}. Using this presentation, we have the following formula which gives the dimension of each representation by the formula
\begin{equation}\label{eq:dimention-in-SU(n)}
d_{\pi}=\prod_{1\leq i < j \leq n} \frac{\pi_i -\pi_j + j -i}{j-i}
\end{equation}
where $\pi$ is the representation corresponding to $(\pi_1,\ldots,\pi_n)$. Suppose that $\pi, \nu,\mu$ are representations corresponding to $(\pi_1,\ldots,\pi_n)$, $(\nu_1,\ldots, \nu_n)$, and $(\mu_1,\ldots,\mu_n)$, respectively, such that $\pi \in\supp(\delta_\nu * \delta_\mu)$.   Collins, Lee, and \`{S}niady showed in \cite[Corollary~1.2]{lee1} there exists some $C_n>0$, for each $n\in\Bbb{N}$, such that
%\begin{equation}\label{eq:Lee-inequality}
%\frac{d_\pi}{d_\mu d_\nu} \leq D_n (1/\mu_1  + 1/\nu_1).
%\end{equation}
%when neither $\nu$ nor $\mu$ is the trivial representation of $\SU(n)$.
%Hence in general if $C_n=\max\{D_n, 1/2\}$, then
\begin{equation}\label{eq:Lee-inequality}
\frac{d_\pi}{d_\mu d_\nu} \leq  C_n \left(\frac{1}{1+\mu_1}  + \frac{1}{1+\nu_1}\right).
\end{equation}
  Applying (\ref{eq:Lee-inequality}), we prove that $\omega_\beta$ $0$-clusters on $\wSU(n)$.

\begin{proposition}\label{p:Arens-of-l1(SU(n))}
For every $\beta>0$, $\ell^1(\wSU(n),\omega_\beta)$ is Arens regular.
\end{proposition}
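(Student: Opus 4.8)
The plan is to invoke Theorem~\ref{t:Arens-regularity-summary}: since it establishes $(1)\Rightarrow(3)$, it suffices to show that the associated function $\Omega_{\omega_\beta}$ $0$-clusters strongly on $\wSU(n)\times\wSU(n)$. Everything then reduces to a single pointwise estimate on $\Omega_{\omega_\beta}$, obtained from the Collins--Lee--\'Sniady bound (\ref{eq:Lee-inequality}).

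First I would write out $\Omega_{\omega_\beta}$ explicitly. Because $\omega_\beta(\pi)=d_\pi^\beta$ is a central weight (Lemma~\ref{l:dimension-as-a-weight}) and $\delta_\nu*\delta_\mu$ is a probability measure with finite support, one has
\[
\Omega_{\omega_\beta}(\nu,\mu)=\frac{\omega_\beta(\delta_\nu*\delta_\mu)}{\omega_\beta(\nu)\omega_\beta(\mu)}=\sum_{\pi\in\supp(\delta_\nu*\delta_\mu)}\delta_\nu*\delta_\mu(\pi)\left(\frac{d_\pi}{d_\nu d_\mu}\right)^\beta.
\]
For each $\pi$ in the support, (\ref{eq:Lee-inequality}) bounds $d_\pi/(d_\nu d_\mu)$ by $C_n\bigl(1/(1+\mu_1)+1/(1+\nu_1)\bigr)$; raising to the power $\beta$ (a monotone operation) and using that the coefficients $\delta_\nu*\delta_\mu(\pi)$ sum to $1$ yields the clean estimate
\[
\Omega_{\omega_\beta}(\nu,\mu)\leq C_n^\beta\left(\frac{1}{1+\mu_1}+\frac{1}{1+\nu_1}\right)^\beta,
\]
where $\nu_1,\mu_1$ denote the largest entries of the dominant-weight tuples of $\nu,\mu$.

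The key observation I would then record is that $\nu\to\infty$ in the discrete hypergroup $\wSU(n)$ forces $\nu_1\to\infty$: under the dominant-weight parametrization the tuple satisfies $\nu_1\geq\cdots\geq\nu_{n-1}\geq\nu_n=0$, so for any fixed $M$ only finitely many representations have first coordinate at most $M$; hence a net leaving every finite set must have $\nu_1\to\infty$. With this in hand, taking the inner $\limsup$ as $\mu\to\infty$ annihilates the $1/(1+\mu_1)$ term and leaves $C_n^\beta/(1+\nu_1)^\beta$, whereupon the outer limit as $\nu\to\infty$ sends the whole thing to $0$. By the symmetry of the bound in $\mu$ and $\nu$ the reversed iterated limit vanishes identically as well, so $\Omega_{\omega_\beta}$ $0$-clusters strongly and Theorem~\ref{t:Arens-regularity-summary} delivers Arens regularity.

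Every step is short once (\ref{eq:Lee-inequality}) is available; the only point demanding genuine care is the passage from ``$\nu\to\infty$'' to ``$\nu_1\to\infty$.'' This is precisely where the concrete structure of $\wSU(n)$ enters, namely the finiteness of the set of dominant weights with bounded leading coordinate, and without it the termwise dimension bound could not be upgraded into the required clustering statement.
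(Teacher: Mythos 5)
Your proof is correct and follows essentially the same route as the paper: both derive the bound $\Omega_{\omega_\beta}(\nu,\mu)\leq C_n^\beta\bigl(\tfrac{1}{1+\mu_1}+\tfrac{1}{1+\nu_1}\bigr)^\beta$ from the Collins--Lee--\'Sniady inequality (\ref{eq:Lee-inequality}), note that escaping to infinity in $\wSU(n)$ forces the leading dominant-weight coordinate to infinity, and conclude strong $0$-clustering, hence Arens regularity via Theorem~\ref{t:Arens-regularity-summary}. Your explicit justification that only finitely many dominant weights have bounded first coordinate is a welcome clarification of a step the paper treats more briefly.
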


\begin{proof}
Let $(\mu_m)_{m\in \Bbb{N}}$ and $(\nu_k)_{k\in\Bbb{N}}$ be two arbitrary sequences of distinct elements of $\wSU(n)$.
Since, the elements of $(\mu_m)_{m\in \Bbb{N}}$  are distinct, $\lim_{m\rightarrow\infty} \mu_1^{(m)}=\infty$   where
$\mu_m=(\mu_1^{(m)},\ldots,\mu_n^{(m)})$. The very same thing can be said for $\nu_k=(\nu_1^{(k)},\ldots,\nu_n^{(k)})$.
For each arbitrary pair $(m,k)\in \Bbb{N}\times \Bbb{N}$, if $\pi\in\supp(\delta_{\mu_m}*\delta_{\nu_k})$, we have
\[
d_\pi\leq C_n (\frac{1}{1+\mu^{(m)}_1}+\frac{1}{1+\nu^{(k)}_1})d_{\mu_m}d_{\nu_k}.
\]
Hence
\[
\omega_\beta(\pi) \leq C_n^\beta  (\frac{1}{1+\mu^{(m)}_1}+\frac{1}{1+\nu^{(k)}_1})^\beta \omega_\beta(\mu_m)\omega_\beta(\nu_k).
\]
Therefore
\begin{eqnarray*}
\omega_\beta(\delta_{\mu_m}*\delta_{\nu_k}) = \sum_{\pi\in\wSU(n)}\delta_{\mu_m}*\delta_{\nu_k}(\pi)\omega_\beta(\pi)
\leq C_n^\beta  (\frac{1}{1+\mu^{(m)}_1}+\frac{1}{1+\nu^{(k)}_1})^\beta \omega_\beta(\mu_m)\omega_\beta(\nu_k).
\end{eqnarray*}
Or equivalently
\[
\Omega_\beta(\mu_m,\nu_k):=\frac{\omega_\beta(\delta_{\mu_m}*\delta_{\nu_k})}{\omega_\beta(\mu_m)\omega_\beta(\nu_k)} \leq   C_n^\beta  (\frac{1}{1+\mu^{(m)}_1}+\frac{1}{1+\nu^{(k)}_1})^\beta.
\]
Hence, $\lim_{m\rightarrow\infty}\limsup_{k\rightarrow\infty} \Omega_\beta(\mu_m,\nu_k)=
\lim_{k\rightarrow\infty}\limsup_{m\rightarrow\infty} \Omega_\beta(\mu_m,\nu_k) = 0$.
Since $\wSU(n)$ is countable, this argument implies that  $\Omega_\beta$ $0$-clusters strongly on $\wSU(n)\times \wSU(n)$ and, by Theorem~\ref{t:Arens-regularity-summary}, $\ell^1(\wSU(n),\omega_\beta)$ is Arens regular.
\end{proof}

%\begin{rem}\label{r:omega_beta-is-additive}
%Note that according to the proof of Theorem~\ref{t:Arens-of-l1(SU(n))},
%\[
%\omega_\beta(\pi) \leq C_n^\beta  (\frac{1}{\mu_1}+\frac{1}{\nu_1})^\beta \omega_\beta(\mu)\omega_\beta(\nu)\leq 2^\beta %C_n^\beta \omega_\beta(\mu)\omega_\beta(\nu)
%\]
%for all $\pi \in \mu*\nu$, since $\mu_1,\nu_1\geq 1$. Therefore, $\omega_\beta$ is a centrally additive weight with %corresponded constant $(2C_n)^\beta$.
%\end{rem}

%\end{subsection}
%\vskip2.0em

\begin{eg}\label{eg:SL(2,2n)-Arens-regularity}
Let $SL(2,2^n)$ denote the finite group of special linear matrices
over the field $\Bbb{F}_{2^n}$ with cardinal $2^n$, for given $n\in\Bbb{N}$.
As a direct result of the character table, \cite{SL(2F)}, for each three conjugacy classes say $C_1,C_2,D\in\conj(SL(2,2^n))$, $|D|\leq 2( |C_1| + |C_2|)$ if $D\subseteq C_1C_2$ for all $n$.  Let us define the FC group $G$ to be the restricted direct product of $\{SL(2,2^n)\}_{n\in \Bbb{N}}$ i.e. $G:= \bigoplus_{n=1}^\infty SL(2,2^n)$.
Therefore,  one can easily show that   the weight $\omega_\alpha$, defined  in Example~\ref{eg:defined-I-C}, is a weakly additive weight with the constant $M=2^\alpha\min\{1, 2^{\alpha-1}\}$. Moreover, since $\lim_{C\rightarrow \infty} \omega_\alpha(C)=\infty$, $\ell^1(\conj(G),\omega_\alpha)$ is Arens regular, by Corollary~\ref{c:Arens-subadditive}.
\end{eg}

 \begin{rem}
Let $\omega$ be a central weight on $\conj(G)$ for some FC group $G$. Then there is a group weight $\sigma_\omega$, as defined in Remark~\ref{r:central-weights-on-groups-UNIQUE}, such that $\ell^1(\conj(G), \omega)$ is isometrically Banach algebra isomorphic to $Z\ell^1(G, \sigma_\omega)$.
So one may also use the embedding $\ell^1(\conj(G), \omega) \hookrightarrow \ell^1(G,\sigma_\omega)$ to study Example~\ref{eg:SL(2,2n)-Arens-regularity} by  applying the theorems which are characterizing Arens regularity of weighted group algebras.
\end{rem}

\begin{rem}\label{r:Arens-of-Qutient}
Let $G$ be an FC group and $\sigma$ a group weight on $G$. We defined $\omega_\sigma$, the derived weight on $\conj(G)$ from $\sigma$ in Proposition~\ref{p:weights-coming-groups}. Recall that in this case $Z\ell^1(G, \sigma)$ is isomorphic to the Banach algebra $\ell^1(\conj(G), \omega_\sigma)$. If $N$ is a normal subgroup of $G$,  we defined a quotient mapping $T_{\omega_\sigma}: \ell^1(\conj(G),\omega_\sigma) \rightarrow \ell^1(\conj(G/N),\tilde{\omega}_\sigma)$ in Corollary~\ref{c:Quotient-mapping-Tomega} where  $\tilde{\omega}_\sigma(C_{xN})=\inf\{\omega_\sigma(C_{xy}):\ y\in N\}\ \ (C_{xN} \in \conj(G/N))$.
 Let us note that for an Arens regular Banach algebra $\cA$, every quotient algebra $\cA/\cI$ where $\cI$ is a closed ideal of $\cA$ is Arens regular as well (see \cite[Corollary 3.15]{da2}). Therefore, if $\ell^1(\conj(G),\omega_\sigma)$ is Arens regular, for every normal subgroup $N$, $\ell^1(\conj(G/N),\tilde{\omega}_\sigma)$, which is isomorphic to $\ell^1(\conj(G),\omega_\sigma)/\ker(T_{\omega_\sigma})$, is Arens regular.
\end{rem}

%\begin{subsection}{Some weighted hypergroup algebras are not Arens regular}\label{ss:non-Arens-regular}

In the final result of this section, we apply some techniques of \cite{yo} to show that for restricted direct product of hypergroups, product weights fail to admit  Arens regular algebras.

\begin{proposition}\label{p:non-Arens-of-special-products}
Let $\{ H_i\}_{i\in\Ind}$ be an infinite family of non-trivial discrete  hypergroups and for each $i\in\Ind$, $\omega_i$ is a weight on $H_i$ such that $\omega_i(e_{H_i})=1$ for all except finitely many $i\in\Ind$. Let $H=\bigoplus_{i\in\Ind}H_i$ and $\omega=\prod_{i\in\Ind}\omega_i$. Then $\ell^1(H,\omega)$ is not Arens regular.
\end{proposition}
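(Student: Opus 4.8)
The plan is to exhibit explicit witnessing data showing that the two Arens products disagree, rather than to try to reverse Theorem~\ref{t:Arens-regularity-summary} (which, as the preceding remark notes, is unavailable because hypergroups need not have cancellation). Recall the double-limit description recorded just before Proposition~\ref{p:Arens-regularity}: if $f_\alpha\to F$ and $g_\beta\to G$ weak$^*$, then $\langle\phi,F\Box G\rangle=\lim_\alpha\lim_\beta\langle\phi,f_\alpha*g_\beta\rangle$ and $\langle\phi,F\Diamond G\rangle=\lim_\beta\lim_\alpha\langle\phi,f_\alpha*g_\beta\rangle$. Thus it suffices to produce bounded sequences $(f_m),(g_n)$ in $\ell^1(H,\omega)$ and a functional $\phi\in\ell^1(H,\omega)^*$ (that is, a function on $H$ with $\sup_x|\phi(x)|/\omega(x)<\infty$) for which both iterated limits of the scalar array $\langle\phi,f_m*g_n\rangle$ exist but are unequal; by the standard double-limit criterion this prevents Arens regularity.

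First I would manufacture ``group-like independence'' to replace the missing cancellation. Since $\Ind$ is infinite, choose two disjoint sequences of distinct indices $(i_k)_k,(j_k)_k\subseteq\Ind$ avoiding the finitely many indices at which $\omega_i(e_{H_i})\neq 1$, and pick $x_{i_k}\in H_{i_k}$, $y_{j_k}\in H_{j_k}$ with $x_{i_k}\neq e_{H_{i_k}}$ and $y_{j_k}\neq e_{H_{j_k}}$ (possible as each $H_i$ is non-trivial). Let $a_m\in H$ equal $x_{i_m}$ in coordinate $i_m$ and $e$ elsewhere, and let $b_n$ equal $y_{j_n}$ in coordinate $j_n$ and $e$ elsewhere. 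Because convolution on a restricted direct product is coordinatewise and the supports of $a_m,b_n$ lie in the disjoint coordinates $i_m\neq j_n$, the product $\delta_{a_m}*\delta_{b_n}=\delta_{c_{m,n}}$ is a single point mass, where $c_{m,n}$ is nontrivial exactly in coordinate $i_m$ (value $x_{i_m}$) and coordinate $j_n$ (value $y_{j_n}$). The map $(m,n)\mapsto c_{m,n}$ is injective, and since $\omega=\prod_i\omega_i$ is multiplicative on point masses with $\omega_{i_m}(e)=\omega_{j_n}(e)=1$, setting $K:=\prod_{i\in\Ind}\omega_i(e_{H_i})<\infty$ gives $\omega(a_m)=K\,\omega_{i_m}(x_{i_m})$, $\omega(b_n)=K\,\omega_{j_n}(y_{j_n})$ and $\omega(c_{m,n})=K\,\omega_{i_m}(x_{i_m})\omega_{j_n}(y_{j_n})$, so that $\Omega_\omega(a_m,b_n)=\omega(c_{m,n})/(\omega(a_m)\omega(b_n))=1/K$ is a positive constant.

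With this in hand I would finish as follows. Put $f_m:=\delta_{a_m}/\omega(a_m)$ and $g_n:=\delta_{b_n}/\omega(b_n)$, which are norm-one in $\ell^1(H,\omega)$, and define $\phi$ on $H$ by $\phi(c_{m,n}):=\omega(c_{m,n})$ when $m\le n$ and $\phi:=0$ at every other point (well defined by injectivity of $(m,n)\mapsto c_{m,n}$); then $\sup_x|\phi(x)|/\omega(x)\le 1$, so $\phi\in\ell^1(H,\omega)^*$. A direct computation gives $\langle\phi,f_m*g_n\rangle=\phi(c_{m,n})/(\omega(a_m)\omega(b_n))=\frac{1}{K}\,\mathbf{1}_{\{m\le n\}}$, whence $\lim_m\lim_n\langle\phi,f_m*g_n\rangle=1/K$ while $\lim_n\lim_m\langle\phi,f_m*g_n\rangle=0$. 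Passing to weak$^*$ cluster points $F,G$ of $(f_m),(g_n)$ and invoking the double-limit criterion, these two values realise $\langle\phi,F\Box G\rangle$ and $\langle\phi,F\Diamond G\rangle$, which therefore disagree; hence $\ell^1(H,\omega)$ is not Arens regular. The one genuinely delicate point, and where the hypotheses are used, is the second step: disjointness of the two coordinate families is exactly what yields single-point-mass, injective products together with a uniformly-bounded-below $\Omega_\omega$ (the surrogate for cancellation), while using \emph{two different} families for $(a_m)$ and $(b_n)$ is what makes the scalar array $\mathbf{1}_{\{m\le n\}}$ asymmetric and so forces the iterated limits apart.
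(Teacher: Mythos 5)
Your proposal is correct and follows essentially the same route as the paper: both pick point masses supported in two disjoint families of coordinates so that each convolution is an injective single point mass with $\Omega_\omega$ constant, test against the (weighted) indicator of the ``upper triangle'' $\{c_{m,n}: m\le n\}$, and separate the two iterated limits. The only difference is cosmetic: you track the constant $K=\prod_i\omega_i(e_{H_i})$ explicitly (getting $1/K$ versus $0$), where the paper tacitly takes this ratio to be $1$.
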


\begin{proof}
Since $\Ind$ is infinite, suppose that $\Bbb{N}_0\times \Bbb{N}_0 \subseteq \Ind$. Define $v_n=(x_i)_{i\in \Ind}$  where $x_i=e_{H_i}$ for all $i\in \Ind\setminus (n,0)$  and $x_{(n,0)}$  be a non-identity element of $H_{(n,0)}$  for all $n\in \Bbb{N}$.   Similarly define $u_m=(x_i)_{i\in\Ind}$ where $x_i=e_{H_i}$ for all $i\in \Ind\setminus (0,m)$  and $x_{(0,m)}$  be a non-identity element of  $H_{(0,m)}$ for all $m\in\Bbb{N}$.
 Note that for each pair of elements $(n,m)\in\Bbb{N}\times \Bbb{N}$, $\supp(\delta_{v_n}*\delta_{u_m})$ forms a singleton in $H$; moreover, $
\omega(\delta_{v_n}*\delta_{u_m})=\omega(v_n)\omega(u_m)$. Hence, $(\delta_{v_n}*\delta_{u_m})_{(n,m)\in \Bbb{N}\times \Bbb{N}}$ forms a sequence of distinct elements in $\ell^1(H)$.

Let us define $f_n=\delta_{v_n}$ and $g_m=\delta_{u_m}$ for all $n,m\in\Bbb{N}$.
Suppose that  $A:=\{(v_n,u_m):\ n>m\}$ and  $\phi\in\ell^\infty(H)$ is the characteristic function of the subset $A$.
Clearly, $\kappa^{-1}(f_n)=\omega^{-1} f_n$ and $\kappa^{-1}(g_m)=\omega^{-1} g_m$ belong to $\ell^1(H,\omega)$ for all $n,m$ and $\kappa^*(\phi)=\omega \phi \in\ell^\infty(H,\omega^{-1})$, for    the Banach space isomorphism $\kappa: \ell^1(H,\omega)\rightarrow \ell^1(H)$ where $\kappa(f)=f\omega$ for each $f\in \ell^1(H,\omega)$. Note that
\begin{eqnarray*}
\langle \omega^{-1} f_n * \omega^{-1} g_m, \kappa^*(\phi)\rangle &=& \langle \omega^{-1} f_n * \omega^{-1} g_m, \omega \phi\rangle\\
 &=& \sum_{t\in H} (\omega^{-1} f_n * \omega^{-1} g_m)(t) \omega(t) \phi(t) \\
&=&   \frac{\omega(v_n*u_m)}{\omega(v_n)\omega(u_m)} \phi(\delta_{v_n}*\delta_{u_m}) \\
&=& \phi(\delta_{v_n}*\delta_{u_m}) = \left\{
\begin{array}{c c}
1 & \text{if $n > m$}\\
0 & \text{if $n\leq m$}
\end{array} \right.
\end{eqnarray*}
Let us recall that for each $n$ and $m$, $\norm{f_n}_{\ell^1(H,\omega)}=1$ and $\norm{g_m}_{\ell^1(H,\omega)}=1$. So $(f_n)_{n\in\Bbb{N}}$  and $(g_m)_{m\in\Bbb{N}}$, as two nets in the unit ball of $\ell^1(H,\omega)^{**}$, have two   subnets $(f_\alpha)_\alpha$ and $(g_\beta)_\beta$   such that $f_\alpha$  and $g_\beta$  converge  weakly$^*$ to some $F$  and $G$ in $\ell^1(H,\omega)^{**}$, respectively.
 Note that for the specific element $\phi$, defined above, $\langle F\Box G,\phi\rangle=0$
 while
$ \langle F\Diamond G,\phi\rangle=  1$.
Hence $F\Box G\neq F\Diamond G$ and consequently  $\ell^1(H,\omega)$ is not Arens regular.
\end{proof}

%\end{subsection}

\end{section}

%%%%%%%%%%%%%%%%%%%%%%%%%%%%%%%%%%%%%%%%%%%%%%%%%%%%%%%%%%%%%%%%%%%%%%%%%%%%%%%%%%%%%%%%%%%%%%%%%%%%%%%%
%\begin{section}{Amenability of weighted hypergroup algebras}\label{s:Amenability-of-hypergroups}

%\end{section}

%%%%%%%%%%%%%%%%%%%%%%%%%%%%%%%%%%%%%%%%%%%%%%%%%%%%%%%%%%%%%%%%%%%%%%%%%%%%%%%%%%%%%%%%%%%%%%%%%%%%%%%%
\begin{section}{Isomorphism to operator algebras}\label{s:Operator-algebra-weighted-hypergroups}

Let $(H,\omega)$ be a weighted discrete hypergroup. In this section, we study the existence of an algebra isomorphism from  $\ell^1(H,\omega)$ onto an operator algebra. A Banach algebra $\cA$ is called an operator algebra if there is a Hilbert space ${\mathcal H}$ such that $\cA$ is  a closed subalgebra of $\cB(\mathcal H)$.
%\begin{dfn}\label{d:injective-Banach-algebra}
Let $\mathcal A$ be a Banach algebra and $\m:{\mathcal A}\times {\mathcal A} \rightarrow {\mathcal A}$ is the bilinear (multiplication) mapping $\m(f,g)=fg$.
% for $\otimes_\epsilon$ the injective tensor product.
Then $\mathcal A$ is called  \ma{injective}, if $\m$ has a bounded extension from ${\mathcal A} \otimes_\epsilon {\mathcal A}$ into $\mathcal A$, where $\otimes_\epsilon$ is the injective tensor product. In this case, we denote the norm of $\m$ by $\norm{\m}_\epsilon$.
%\end{dfn}
 \cite[Corollary 2.2.]{sa3}  proves that if a Banach algebra $\cA$ is injective then it is isomorphic to an operator algebra. But the converse also holds for weighted hypergroup algebras. The proof is similar to the group case in \cite[Theorem 2.8]{sa3} and it follows from the little Grothendieck inequality (see \cite{pisier}).
%\end{theorem}
Note that a Banach algebra which is isomorphic to an operator algebra is always Arens regular (\cite[Corollary~2.5.4]{blecher}).
\vskip1.0em

Injectivity  of weighted group algebras has been studied before. Initially  Varopoulos, in \cite{va}, studied the group $\Bbb{Z}$ equipped with the weight $\sigma_\alpha(n)= (1+|n|)^\alpha$ for all $\alpha\geq 0$. This study  looked at { injectivity} of $\ell^1(\Bbb{Z},\sigma_\alpha)$. He showed that $\ell^1(\Bbb{Z},\sigma_\alpha)$ is injective if and only if $\alpha>1/2$.
 The manuscript~\cite{sa3}, which studied  the injectivity  question for a wider family of weighted group algebras, developed a machinery applying Littlewood multipliers. In particular, it partially extended Varopoulos's result to finitely generated groups with polynomial growth.
 Following the structure of \cite{sa3}, in this section,  we study the injectivity or equivalently isomorphism to operator algebras for  weighted hypergroup algebras.

In this section, $\cal A\otimes_\gamma B$ and  $\cal A\otimes_\epsilon B$ denote respectively the projective and injective tensor products of Banach spaces $\cal A$ and $\cal B$.

We know that $\ell^1(H,\omega)\otimes_\gamma \ell^1(H,\omega)$ is isometrically isomorphic to $\ell^1(H\times H,\omega \times \omega)$. Moreover, $\ell^1(H\times H,\omega \times \omega)^*$ is   $\ell^\infty(H\times H,\omega^{-1}\times \omega^{-1})$. Since  the injective tensor norm is minimal among all cross-norm Banach space tensor norms,   the identity map $\iota:\ell^1(H)\times \ell^1(H) \rightarrow \ell^1(H)\times \ell^1(H)$ may extend to a contractive mapping
\[
\iota: \ell^1(H) \otimes_\gamma \ell^1(H) \rightarrow \ell^1(H) \otimes_\epsilon \ell^1(H).
\]
Since, $\iota$ has a dense range,
 \begin{equation}\label{eq:e-sits-in-gamma}
 \iota^*: (\ell^1(H) \otimes_\epsilon \ell^1(H))^* \rightarrow (\ell^1(H) \otimes_\gamma \ell^1(H))^*=\ell^\infty(H\times H)
 \end{equation}
 is an injective mapping. Therefore, applying $\iota^*$, one may embed $(\ell^1(H)\otimes_\epsilon \ell^1(H))^*$ into $\ell^\infty(H\times H)$, as a linear  subspace of $\ell^\infty(H\times H)$.

%\begin{subsection}{Littlewood multipliers for hypergroups}
Let $H$ be a discrete hypergroup. We define \ma{Littlewood multipliers} of $H$ to be the set of all functions $f:H\times H \rightarrow \Bbb{C}$ such that there exist functions $f_1,f_2: H\times H \rightarrow \Bbb{C}$ where $
f(x,y)=f_1(x,y) + f_2(x,y)$ for $x,y\in G$
such that
\[
\sup_{y\in H}\sum_{x\in H} |f_1(x,y)|^2 <\infty\ \ \text{and} \ \ \sup_{x\in H}\sum_{y\in H} |f_2(x,y)|^2 <\infty.
\]
We denote the set of all Littlewood multipliers by $T^2(H)$ and define the norm $\norm{\cdot}_{T^2(H)}$ by
\[
\norm{f}_{T^2(H)}:=\inf\left\{ \sup_{y\in H}\left( \sum_{x\in H} |f_1(x,y)|^2\right)^{1/2} + \sup_{x\in H}\left( \sum_{y\in H} |f_2(x,y)|^2\right)^{1/2} \right\}
\]
where the infimum is  taken over all possible decompositions $f_1,f_2$.
Note that for  a decomposition $f_1,f_2$ of    $f\in T^2(H)$,
\begin{eqnarray*}
\norm{f}_{\ell^\infty(H\times H)} = \sup_{x,y\in H} |f(x,y)| &\leq& \sup_{x,y\in H} |f_1(x,y)| + \sup_{x,y\in H} |f_2(x,y)|\\
&\leq& \sup_{y\in H}\left( \sum_{x\in H} |f_1(x,y)|^2\right)^{1/2} + \sup_{x\in H}\left( \sum_{y\in H} |f_2(x,y)|^2\right)^{1/2} <\infty,
\end{eqnarray*}
since for discrete space $H$, $\ell^2(H)\subseteq \ell^\infty(H)$ and $\norm{\cdot}_\infty \leq \norm{\cdot}_2$. Since $f_1,f_2$, in the previous equation are arbitrary, $\norm{f}_{\ell^\infty(H\times H)}\leq \norm{f}_{T^2(H)}$.
Hence $T^2(H)\subseteq \ell^\infty(H\times H)$. Furthermore, for each $\phi\in \ell^\infty(H\times H)$ and $f\in T^2(H)$, $f\phi\in T^2(H)$ and
%\begin{equation}\label{eq:norm-T2}
$\norm{f\phi}_{T^2(H)} \leq \norm{f}_{T^2(H)}\norm{\phi}_\infty$.
%\end{equation}

\vskip1.5em

The following theorem is the hypergroup version of \cite[Theorem~2.7]{sa3}. Since the proof is very similar to  the group case, we  omit it here (although with all the details it can be found in \cite{ma-the}).  Here we use  $\Gr$ to denote \ma{Grothendieck's constant}.
First in his celebrated ``R{\'e}sum{\'e}",  Grothendieck   proved the existence of the constant $\Gr$ in  Grothendieck's inequality.  For a detailed account of {Grothendieck's constant}, its   history, and approximations look at \cite[Sections~3 and 4]{pisier}.

\begin{theorem}\label{t:littlewood}
Let  $I:T^2(H) \rightarrow (\ell^1 (H) \otimes_\gamma \ell^1(H))^*=\ell^\infty(H\times H)$ be the mapping which takes every element of $T^2(H)$ to itself as a bounded function on $H\times H$. Then $I(T^2(H))\subseteq \iota^*((\ell^1(H)\otimes _\epsilon \ell^1(H))^*)$ for the mapping $\iota^*$ defined in (\ref{eq:e-sits-in-gamma}).
Moreover,  $J:={\iota^*}^{-1}\circ I: T^2(H) \rightarrow (\ell^1(H)\otimes _\epsilon \ell^1(H))^*$ is bounded and   $\norm{J}\leq \Gr$.
\end{theorem}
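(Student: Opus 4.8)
The plan is to follow the finite-support route and reduce the whole statement to a single invocation of Grothendieck's inequality. First I would record the elementary but crucial observation that a functional $F$ on $\ell^1(H)\otimes_\epsilon\ell^1(H)$ is completely determined by its values $F(\delta_x\otimes\delta_y)$ on the elementary tensors, and that under $\iota^*$ it is sent to the bounded function $(x,y)\mapsto F(\delta_x\otimes\delta_y)$ in $\ell^\infty(H\times H)$. Thus, for a given $f\in T^2(H)$, proving $I(f)\in\iota^*\bigl((\ell^1(H)\otimes_\epsilon\ell^1(H))^*\bigr)$ reduces to producing a \emph{bounded} functional $\tilde F_f$ on $\ell^1(H)\otimes_\epsilon\ell^1(H)$ with $\tilde F_f(\delta_x\otimes\delta_y)=f(x,y)$; once this is done we automatically have $J(f)=\tilde F_f$, and the desired norm estimate $\norm{J}\le\Gr$ becomes the assertion $\norm{\tilde F_f}\le\Gr\,\norm{f}_{T^2(H)}$.

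Next I would work on the dense subspace spanned by the elementary tensors, i.e. on the finitely supported $u=\sum_{x,y}w(x,y)\,\delta_x\otimes\delta_y$ (a finite sum). There the definition of the injective norm, tested against the unit balls of $\ell^1(H)^*=\ell^\infty(H)$, gives the explicit formula
\[
\norm{u}_\epsilon=\sup\Bigl\{\Bigl|\sum_{x,y}w(x,y)\phi(x)\psi(y)\Bigr|:\ \phi,\psi\in\ell^\infty(H),\ \norm{\phi}_\infty,\norm{\psi}_\infty\le 1\Bigr\}.
\]
Pairing the candidate functional with $u$ yields the finite sum $\langle\tilde F_f,u\rangle=\sum_{x,y}f(x,y)w(x,y)$, so the entire theorem collapses to the single inequality $\bigl|\sum_{x,y}f(x,y)w(x,y)\bigr|\le\Gr\,\norm{f}_{T^2(H)}\,\norm{u}_\epsilon$ on this dense subspace.

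The core step, where Grothendieck's inequality enters, is to split $f=f_1+f_2$ by a Littlewood decomposition and realize each piece as a Gram function. For $f_1$, with $\sup_y\bigl(\sum_x|f_1(x,y)|^2\bigr)^{1/2}=M_1$, I would set $\xi_x=\delta_x$ and $\eta_y=\overline{f_1(\cdot,y)}$ inside $\ell^2$ of the finite support of $u$, so that $f_1(x,y)=\langle\xi_x,\eta_y\rangle$ with $\sup_x\norm{\xi_x}_2\le1$ and $\sup_y\norm{\eta_y}_2\le M_1$. Applying Grothendieck's inequality to the scalar array $(w(x,y))$, whose $\ell^\infty$-bilinear-form norm is exactly $\norm{u}_\epsilon$ by the formula above, gives
\[
\Bigl|\sum_{x,y}f_1(x,y)w(x,y)\Bigr|=\Bigl|\sum_{x,y}w(x,y)\langle\xi_x,\eta_y\rangle\Bigr|\le\Gr\,M_1\,\norm{u}_\epsilon,
\]
and the symmetric choice of Gram vectors handles $f_2$, with $\sup_x\bigl(\sum_y|f_2(x,y)|^2\bigr)^{1/2}=M_2$, producing the bound $\Gr\,M_2\,\norm{u}_\epsilon$. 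Adding these and taking the infimum over all admissible decompositions yields $|\langle\tilde F_f,u\rangle|\le\Gr\,\norm{f}_{T^2(H)}\,\norm{u}_\epsilon$ on the dense subspace, whence $\tilde F_f$ extends by continuity to all of $\ell^1(H)\otimes_\epsilon\ell^1(H)$ with the required norm.

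The main obstacle is not any individual estimate but keeping the passage to the infinite discrete $H$ uniform: one must verify that when $\eta_y=\overline{f_1(\cdot,y)}$ is restricted to the finite support of $u$ its norm stays bounded by $M_1$ independently of $\supp u$, so that the Grothendieck constant does not degrade, and that boundedness on the dense subspace of finitely supported tensors genuinely yields a functional on the completion whose $\iota^*$-image recovers $f$ on elementary tensors. Both points are routine once the finite-support reduction is set up, which is precisely why the argument parallels the group case of \cite[Theorem~2.7]{sa3}.
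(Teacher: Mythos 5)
Your argument is correct and is essentially the proof the paper has in mind: the paper omits it as ``very similar to the group case'' of \cite[Theorem~2.7]{sa3}, and that proof is exactly your reduction to finitely supported tensors, the identification of $\norm{u}_\epsilon$ with the bilinear-form norm of the coefficient matrix over the unit balls of $\ell^\infty(H)$, and one application of Grothendieck's inequality to each piece of a Littlewood decomposition realized as a Gram function. Your closing remark is also the right sanity check --- the hypergroup convolution plays no role here, so nothing beyond the discrete-set setting needs to be verified.
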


From now on, we identify $(\ell^1(H)\otimes_\epsilon \ell^1(H))^*)$ with its image through the mapping $\iota^*$; hence, $J$ is the identity mapping which takes $T^2(H)$ into  $(\ell^1(H)\otimes_\epsilon \ell^1(H))^*$. We present our first main result of this section. This is  a generalization of \cite[Theorem~3.1]{sa3}.

%\end{subsection}

\vskip2.0em

%%%%%%%%%%%%%%%%%%%%%%%%%%%%%%%%%%%%%%%%%%%%%%%%%%%%%%%%%
%\begin{subsection}{The operator algebra property of weighted hypergroup algebras}

\begin{theorem}\label{t:injectiv-l^1(H,omega)-for-T2-weights}
Let $H$ be a discrete hypergroup and $\omega$ is a weight on $H$ such that $\Omega$, defined in (\ref{eq:Omega}), belongs to $T^2(H)$. Then $\ell^1(H,\omega)$ is injective and equivalently isomorphic to an operator algebra. Moreover,
for the multiplication map $\m$ on $\ell^1(H, \omega)\otimes_\epsilon \ell^1(H, \omega)$,  as defined before, $\norm{\m}_\epsilon \leq \Gr  \norm{\Omega}_{T^2(H)}$.
\end{theorem}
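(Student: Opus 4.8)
The plan is to establish injectivity directly, namely to show that the multiplication map $\m$ extends boundedly from the injective tensor product, while tracking the constant so as to obtain $\norm{\m}_\epsilon\leq\Gr\norm{\Omega}_{T^2(H)}$; the passage to an operator algebra is then immediate from \cite[Corollary 2.2]{sa3}. Since the machinery of Theorem~\ref{t:littlewood} is stated for the unweighted algebra $\ell^1(H)$, I would first transport the problem there along the isometric isomorphism $\kappa:\ell^1(H,\omega)\to\ell^1(H)$, $\kappa(f)=f\omega$. As $\kappa$ is isometric, $\kappa\otimes\kappa$ is an isometric isomorphism for both $\otimes_\gamma$ and $\otimes_\epsilon$, so $\m$ extends to $\ell^1(H,\omega)\otimes_\epsilon\ell^1(H,\omega)$ with a given norm if and only if the transported multiplication $N:=\kappa\circ\m\circ(\kappa\otimes\kappa)^{-1}$ extends to $\ell^1(H)\otimes_\epsilon\ell^1(H)$ with the same norm, where on elementary tensors $N(a\otimes b)=\big((a/\omega)*(b/\omega)\big)\omega$.

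Next I would compute the adjoint $N^*:\ell^\infty(H)\to(\ell^1(H)\otimes_\gamma\ell^1(H))^*=\ell^\infty(H\times H)$. A direct computation gives, for $\psi\in\ell^\infty(H)$,
\[
N^*(\psi)(x,z)=\frac{1}{\omega(x)\omega(z)}\sum_{y\in H}\psi(y)\,\omega(y)\,(\delta_x*\delta_z)(y)\qquad(x,z\in H).
\]
Since $\delta_x*\delta_z$ is a positive measure, this is dominated pointwise by $|N^*(\psi)(x,z)|\leq\norm{\psi}_\infty\,\omega(\delta_x*\delta_z)/(\omega(x)\omega(z))=\norm{\psi}_\infty\,\Omega(x,z)$. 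The key structural observation is therefore that $N^*(\psi)=\Omega\cdot\phi_\psi$, where $\phi_\psi(x,z):=N^*(\psi)(x,z)/\Omega(x,z)$ is well defined (as $\Omega$ takes values in $(0,1]$, so $\Omega>0$ everywhere) and satisfies $\norm{\phi_\psi}_\infty\leq\norm{\psi}_\infty$.

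Now I would invoke the hypotheses together with the earlier results. Because $\Omega\in T^2(H)$ and $T^2(H)$ is a module over $\ell^\infty(H\times H)$ with $\norm{\Omega\phi_\psi}_{T^2(H)}\leq\norm{\Omega}_{T^2(H)}\norm{\phi_\psi}_\infty$, we obtain $N^*(\psi)\in T^2(H)$ with $\norm{N^*(\psi)}_{T^2(H)}\leq\norm{\Omega}_{T^2(H)}\norm{\psi}_\infty$. By Theorem~\ref{t:littlewood}, $N^*(\psi)$ then lies in the image of $\iota^*$, i.e.\ it determines an element $S\psi:=(\iota^*)^{-1}N^*(\psi)=J(N^*(\psi))$ of $(\ell^1(H)\otimes_\epsilon\ell^1(H))^*$ of norm at most $\Gr\norm{\Omega}_{T^2(H)}\norm{\psi}_\infty$. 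Thus $N^*$ maps $\ell^\infty(H)$ into the subspace $\iota^*\big((\ell^1(H)\otimes_\epsilon\ell^1(H))^*\big)$ boundedly.

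Finally I would close the argument by the standard duality for injective tensor norms: using $N^*(\psi)=\iota^*(S\psi)$, for any $u\in\ell^1(H)\otimes_\gamma\ell^1(H)$,
\[
\norm{N(u)}_{\ell^1(H)}=\sup_{\norm{\psi}_\infty\leq1}|\langle N^*\psi,u\rangle|=\sup_{\norm{\psi}_\infty\leq1}|\langle S\psi,\iota(u)\rangle|\leq\Gr\norm{\Omega}_{T^2(H)}\,\norm{\iota(u)}_\epsilon,
\]
so $N$ factors through $\iota$ (which has dense range) and extends to $\ell^1(H)\otimes_\epsilon\ell^1(H)$ with norm at most $\Gr\norm{\Omega}_{T^2(H)}$; transporting back through the isometries $\kappa$ yields $\norm{\m}_\epsilon\leq\Gr\norm{\Omega}_{T^2(H)}$, whence $\ell^1(H,\omega)$ is injective and hence isomorphic to an operator algebra. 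I expect the main obstacle to be this last duality step—correctly formulating that ``the range of the adjoint multiplication lies inside $(\ell^1(H)\otimes_\epsilon\ell^1(H))^*$'' is equivalent to $\epsilon$-boundedness of the multiplication (using injectivity of $\iota^*$, density of the range of $\iota$, and the factorization above)—since the genuinely hard analytic input, Grothendieck's inequality, is already encapsulated in Theorem~\ref{t:littlewood}.
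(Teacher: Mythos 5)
Your proof is correct and follows essentially the same route as the paper: your transported adjoint $N^*$ is exactly the map $L = R\circ\Gamma_\omega^*\circ P$ that the paper constructs via its commuting diagram, and both arguments rest on the pointwise domination $|N^*(\psi)|\leq \norm{\psi}_\infty\,\Omega$, the $\ell^\infty(H\times H)$-module property of $T^2(H)$, and Theorem~\ref{t:littlewood}. If anything, your closing duality step (recovering $\epsilon$-boundedness of $N$ from the fact that $N^*$ lands in $\iota^*\bigl((\ell^1(H)\otimes_\epsilon\ell^1(H))^*\bigr)$) is spelled out more explicitly than in the paper.
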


\begin{proof}
Let
$
\Gamma_\omega:\ell^1(H\times H, \omega \times \omega) \rightarrow \ell^1(H,\omega)$
such that
%\begin{equation}\label{eq:Gamma_omega}
$\Gamma_\omega(f\otimes g):=f*g$
%\end{equation}
for  $f,g\in \ell^1(H,\omega)$. The adjoint of $\Gamma_\omega$, ${\Gamma}^*_\omega$, can be characterized   as follows.
\begin{eqnarray*}
{\Gamma}^*_\omega(\phi)(x,y) = \langle {\Gamma}^*_\omega(\phi),\delta_{x}\otimes\delta_{y}\rangle
= \langle \phi, \Gamma_\omega(\delta_x\otimes \delta_y)\rangle
= \langle \phi, \delta_x*\delta_y\rangle
\end{eqnarray*}
for all $\phi \in \ell^\infty(H,\omega^{-1})$ and $x,y \in H$.
  Now we define $L$ from $\ell^\infty(H)$ to $\ell^\infty(H\times H)$ such that the following diagram commutes,
\[
\xymatrix{
{\ell^\infty(H,\omega^{-1})} \ar[rr]^{{\Gamma}^*_{\omega}}  & & {\ell^\infty(H\times H, \omega^{-1} \times \omega^{-1})}\ar[d]_{R}\\
{\ell^\infty(H)}\ar[u]^{P}\ar[rr]^{L} & & {\ell^\infty(H\times H)}
 }
\]
where $P(\varphi)(x)=\varphi(x)\omega(x)$ for $\varphi\in \ell^\infty(H)$ and $R(\phi)(x,y)=\phi(x,y)\omega^{-1}(x)\omega^{-1}(y)$ for $\phi\in \ell^\infty(H\times H, \omega^{-1}\times\omega^{-1})$ and  $x,y\in H$.
Hence, one gets
\begin{eqnarray*}
L(\varphi)(x,y) = R\left( {\Gamma}^*_{\omega} \circ P(\varphi)\right)(x,y)
&=& \frac{\left({\Gamma}^*_{\omega} \circ P (\varphi)\right)(x,y)}{\omega(x) \omega(y)}\\
&=& \frac{{\Gamma}^*_{\omega} \left( \omega \varphi \right)(x,y)}{\omega(x) \omega(y)}\\
&=& \frac{ \langle \varphi \omega, \delta_x*\delta_y\rangle}{\omega(x) \omega(y)}\\
&=& \sum_{t\in H} \delta_x*\delta_y(t) \frac{\omega(t)}{\omega(x)\omega(y)} \varphi(t).
%&=& \varphi(\delta_x*\delta_y) \Omega(x,y).
\end{eqnarray*}
for all $\varphi\in \ell^\infty(H)$.
%If $\varphi(x)\geq 0$ for all $x\in H$, then for all $x,y\in H$,
%and since $\omega$ is weakly additive, note that
Hence,
\[
\left| \sum_{t\in H} \delta_x*\delta_y(t) \frac{\omega(t)}{\omega(x)\omega(y)} \varphi(t)\right|
\leq  \sum_{t\in H} \delta_x*\delta_y(t) \frac{\omega(t)}{\omega(x)\omega(y)} |\varphi(t)|
 \leq \norm{\varphi}_\infty \Omega(x,y)
% \leq \norm{\varphi}_\infty C(\omega(x)+\omega(y)).
\]
So there is a function $v_\varphi:H\times H\rightarrow \Bbb{C}$ such that
\[
\frac{\langle\delta_x*\delta_y,\omega \varphi \rangle}{\omega(x) \omega(y)} = v_\varphi(x,y) \norm{\varphi}_\infty \Omega(x,y)
\]
and $\norm{v_\varphi}_\infty\leq 1$.
Therefore
%\begin{eqnarray*}
$L(\varphi) =  \Lambda(\varphi) \Omega$
%\end{eqnarray*}
where $\Lambda(\varphi)(x,y):=v_\varphi(x,y) \norm{\phi}_\infty$
 for all $\varphi\in\ell^\infty(H)$.
 % where $\varphi\geq 0$.
Since $\Omega$ belongs to $T^2(H)$ and $T^2(H)$ is an $\ell^\infty(H\times H)$-module, $L(\varphi) \in T^2(H)$ and $\norm{L(\varphi)}_{T^2(H)} \leq \norm{\varphi}_\infty \norm{\Omega}_{T^2(H)}$.
%For a general $\varphi\in \ell^\infty(H)$, note that $\varphi=\varphi_1-\varphi_2 + i(\varphi_3-\varphi_4)$ for $\varphi_i\in \ell^\infty(H)$ and $\varphi_i(x)\geq 0$ for all $x\in H$, $i=1,2,3,4$. Hence $L(\varphi)=L(\varphi_1)-L(\varphi_2) + iL(\varphi_3) - iL(\varphi_4) \in T^2(H)$ and
% \[
 %\norm{L(\varphi)}_{T^2(H)}
 %\leq \norm{\Omega}_{T^2(H)} \sum_{i=1}^4 \norm{\varphi_i}_\infty \norm{v_{\varphi_i}}_\infty \norm{\delta_x*\delta_y}_1
% \leq 4  \norm{\Omega}_{T^2(H)}  \norm{\varphi}_\infty.
%\]
Therefore $L(\ell^\infty(H))\subseteq T^2(H) \subseteq (\ell^1(H)\otimes_\epsilon \ell^1(H))^*$.

In this case, using the following diagram with $\cA=R^{-1}((\ell^1(H)\otimes_\epsilon \ell^1(H))^*)$,
\[
\xymatrix{
{\ell^\infty(H,\omega^{-1})} \ar[rr]^{{\Gamma}_\omega^*}  &  &
\cA \ar[rr]^{\iota} \ar[d]^{R|_{r}}  & & {\ell^\infty(H\times H, \omega^{-1} \times \omega^{-1})}\ar[d]_{R}\\
{\ell^\infty(H)}\ar[u]^{P}\ar[rr]^{L} & &  (\ell^1(H)\otimes_\epsilon \ell^1(H))^*  \ar[rr]^{\iota} &  & {\ell^\infty(H\times H)}
 }
\]

One can easily verify that $\cA=(\ell^1(H,\omega)\otimes_\epsilon \ell^1(H,\omega))^*$.
So, we  have shown that ${\Gamma}^*$ is a map projecting $\ell^\infty(H)$ into $(\ell^1(H)\otimes_\epsilon \ell^1(H))^*$ as a subset of $\ell^\infty(H\times H)$.
we see that ${\Gamma}_\omega^*$ is a map projecting $\ell^\infty(H, \omega^{-1})$ into $(\ell^1(H,\omega)\otimes_\epsilon \ell^1(H, \omega))^*$.
Hence, ${\Gamma}_\omega^*=\m^*$,  where $
\m$ is the multiplication extended to $\ell^1(H,\omega)\otimes_\epsilon \ell^1(H,\omega)$.
Therefore $\m$ is bounded and  $\norm{\m}=\norm{ \Gamma_\omega}=\norm{ R\Gamma_\omega P}=\norm{L}$. Moreover,
\begin{eqnarray*}
\norm{L(\varphi)}_{(\ell^1(H)\otimes^\epsilon\ell^1(H))^*} &\leq& \norm{J} \; \norm{\Gamma^*(\varphi)}_{T^2(H)}
\leq \Gr \left\|\Omega\right\|_{T^2(H)} \norm{\Lambda(\varphi)}_{\ell^\infty(H\times H)}\\
&\leq&  \Gr \left\|\Omega\right\|_{T^2(H)}  \norm{\varphi}_{\ell^\infty(H)}
\end{eqnarray*}
for all $\varphi\in \ell^\infty(H)$. Consequently, $\norm{\m}_\epsilon \leq \Gr \norm{\Omega}_{T^2(H)}$.
\end{proof}

\begin{eg}\label{eg:OP-of-SU(n)}
Let $\omega_\beta$ be the dimension weight defined on $\wSU(n)$ in Lemma~\ref{l:dimension-as-a-weight}. As we have shown in the proof of Proposition~\ref{p:Arens-of-l1(SU(n))}, for the polynomial weight $\omega_\beta$, $\beta\geq 0$, and $\mu,\nu \in \wSU(n)$,
\[
\Omega_\beta(\mu,\nu) \leq  C_n^\beta  (\frac{1}{1+\mu_1}+\frac{1}{1+\nu_1})^\beta
 \leq A_\beta C_n^\beta  \left( \frac{1}{(1+\mu_1)^\beta}+\frac{1}{(1+\nu_1)^\beta}\right),
\]
where $A_\beta=\min\{1,2^{\beta-1}\}$.
To study  $\norm{\cdot}_{T^2(\wSU(2))}$ for $\Omega_\beta$, let us note that for each $k\in \Bbb{N}\cup\{0\}$, there are less than $(1+k)^{n-2}$ many $\lambda=(\lambda_1,\ldots,\lambda_n) \in \wSU(n)$ such that $\lambda_1=k$.
Therefore
\[
\sum_{\lambda\in\wSU(n)} \frac{1}{(1+\lambda_1)^{2\beta}} \leq \sum_{k=0}^\infty \frac{(1+k)^{n-2}}{(1+k)^{2\beta}}
\]
where the right-hand side series converges  if and only if $2\beta - n +2>1$.
Therefore, for $\beta> (n-1)/2$, $\Omega_\beta\in T^2(\wSU(n))$ and by Theorem~\ref{t:injectiv-l^1(H,omega)-for-T2-weights}, $\ell^1(\wSU(2),\omega_\beta)$ is injective and consequently isomorphic to an operator algebra. Moreover,
note that
\begin{eqnarray*}
\norm{\Omega_\beta}_{T^2(\wSU(n))} &\leq& \left\|(\mu,\nu)\mapsto  \frac{A_\beta C_n^\beta}{1+\mu_1}+\frac{A_\beta C_n^\beta}{1+\nu_1}\right\|_{T^2(H)}\\
&\leq&  \sup_{\nu\in \wSU(n)}\left( \sum_{\mu\in \wSU(n)} \left|\frac{  A_\beta C_n^\beta }{1+\mu_1}\right|^2\right)^{1/2} \\
&+ &\sup_{\mu\in \wSU(n)}\left( \sum_{\nu\in \wSU(n)} \left|\frac{A_\beta C_n^\beta}{1+\nu_1}\right|^2\right)^{1/2}\\
&\leq& A_\beta C_n^\beta 2 \left( \sum_{k=0}^\infty \frac{1}{(1+k)^{2\beta-n+2}}\right)^{1/2}.
\end{eqnarray*}
Hence, for $A_\beta=\min\{1,2^{\beta-1}\}$,
\[
\norm{\m}_\epsilon \leq 2 \Gr A_\beta  C_n^\beta \left( \sum_{k=0}^\infty \frac{1}{(1+k)^{2\beta-n+2}}\right)^{1/2}
\]
\end{eg}

\vskip2.0em

\begin{cor}\label{c:injectiv-l^1(H,omega)-for-general-case}
Let $H$ be a discrete hypergroup and $\omega$ is a weakly  additive weight on $H$ with a corresponding constant $C>0$.
 Then $\ell^1(H,\omega)$ is injective if $ \sum_{x\in H}{\omega(x)^{-2}}<\infty$.
  Moreover,
\[
\norm{\m}_\epsilon \leq 2C\Gr  \left( \sum_{x\in H}\frac{1}{\omega(x)^2} \right)^{1/2}.
\]
\end{cor}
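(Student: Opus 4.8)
The plan is to deduce this directly from Theorem~\ref{t:injectiv-l^1(H,omega)-for-T2-weights}: once I verify that the function $\Omega$ of (\ref{eq:Omega}) lies in $T^2(H)$ with $\norm{\Omega}_{T^2(H)} \leq 2C\left(\sum_{x\in H}\omega(x)^{-2}\right)^{1/2}$, both the injectivity of $\ell^1(H,\omega)$ (equivalently, its isomorphism to an operator algebra) and the asserted bound $\norm{\m}_\epsilon \leq \Gr\norm{\Omega}_{T^2(H)}$ follow immediately. So the entire task reduces to producing a good Littlewood decomposition of $\Omega$.

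First I would use the weak additivity of $\omega$ to dominate $\Omega$. Since $\omega(\delta_x*\delta_y)\leq C(\omega(x)+\omega(y))$, we obtain
\[
0\leq \Omega(x,y)=\frac{\omega(\delta_x*\delta_y)}{\omega(x)\omega(y)}\leq C\left(\frac{1}{\omega(x)}+\frac{1}{\omega(y)}\right)=:g(x,y)
\]
for all $x,y\in H$. The majorant $g$ carries the obvious split $g=g_1+g_2$ with $g_1(x,y)=C/\omega(x)$ and $g_2(x,y)=C/\omega(y)$. Under the summability hypothesis $\sum_{x\in H}\omega(x)^{-2}<\infty$, each piece is square-summable in the correct variable, namely $\sup_y\sum_x|g_1(x,y)|^2 = C^2\sum_x\omega(x)^{-2}$ and $\sup_x\sum_y|g_2(x,y)|^2=C^2\sum_y\omega(y)^{-2}$, so $g\in T^2(H)$ with $\norm{g}_{T^2(H)}\leq 2C\left(\sum_{x\in H}\omega(x)^{-2}\right)^{1/2}$.

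The only point demanding a little care is the passage from the pointwise domination $\Omega\leq g$ to the membership $\Omega\in T^2(H)$, since $T^2(H)$ need not be a lattice and a mere pointwise bound by a $T^2$ function is not automatically sufficient. Here I would exploit that $T^2(H)$ is a module over $\ell^\infty(H\times H)$ (recorded just before Theorem~\ref{t:littlewood}): because $g(x,y)>0$ everywhere, the function $\phi:=\Omega/g$ is well defined with $0\leq\phi\leq 1$, hence $\phi\in\ell^\infty(H\times H)$ with $\norm{\phi}_\infty\leq 1$, and $\Omega=\phi\,g$. The module inequality then delivers $\Omega\in T^2(H)$ together with $\norm{\Omega}_{T^2(H)}\leq\norm{\phi}_\infty\norm{g}_{T^2(H)}\leq 2C\left(\sum_{x\in H}\omega(x)^{-2}\right)^{1/2}$. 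Feeding this into Theorem~\ref{t:injectiv-l^1(H,omega)-for-T2-weights} shows that $\ell^1(H,\omega)$ is injective and that $\norm{\m}_\epsilon\leq\Gr\norm{\Omega}_{T^2(H)}\leq 2C\Gr\left(\sum_{x\in H}\omega(x)^{-2}\right)^{1/2}$, which is exactly the claimed estimate.
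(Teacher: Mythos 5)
Your proposal is correct and follows essentially the same route as the paper: dominate $\Omega$ pointwise by $C/\omega(x)+C/\omega(y)$ via weak additivity, read off the Littlewood decomposition $f_1(x,y)=C/\omega(x)$, $f_2(x,y)=C/\omega(y)$, and feed the resulting $T^2$-bound into Theorem~\ref{t:injectiv-l^1(H,omega)-for-T2-weights}. The only difference is that you justify the passage from the pointwise bound $\Omega\leq g$ to $\norm{\Omega}_{T^2(H)}\leq\norm{g}_{T^2(H)}$ explicitly via the $\ell^\infty(H\times H)$-module property (writing $\Omega=(\Omega/g)\,g$), a step the paper's proof asserts without comment; this is a genuine improvement in rigor but not a different argument.
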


\begin{proof}
 Suppose that $ \sum_{x\in H} \omega(x)^{-2}<\infty$. Note that for each $t\in \supp(\delta_x*\delta_y)$,
\begin{eqnarray*}
\frac{\omega(t)}{\omega(x)\omega(y)} \leq C\frac{\omega(x)+\omega(y)}{\omega(x)\omega(y)} = \frac{C}{\omega(x)}+\frac{C}{\omega(y)}.
\end{eqnarray*}
 Thus, for the functions $f_1(x,y)={\omega(x)}^{-1}$ and $f_2(x,y)={\omega(y)}^{-1}$,
\begin{eqnarray*}
\norm{\Omega}_{T^2(H)} &\leq& \left\|(x,y)\mapsto \frac{C}{\omega(x)}+\frac{C}{\omega(y)}\right\|_{T^2(H)}\\ &\leq&   \left(\sup_{y\in H}\left( \sum_{x\in H} \left|\frac{C}{\omega(x)}\right|^2\right)^{1/2} + \sup_{x\in H}\left( \sum_{y\in H} \left|\frac{C}{\omega(y)}\right|^2\right)^{1/2}\right) \leq  2C \left(\sum_{x\in H} \frac{1}{\omega(x)^2}\right)^{\frac{1}{2}}.
\end{eqnarray*}
Consequently, by Theorem~\ref{t:injectiv-l^1(H,omega)-for-T2-weights}, $\ell^1(H,\omega)$ is injective and $\norm{\m}_\epsilon$ satisfies the mentioned inequality.
\end{proof}

\begin{eg}\label{eg:chebyshev-OP}
Let $\omega_f$ be the weight constructed by the group weight  admitted by a positive increasing function $f$ (see Example~\ref{eg:Chebyshev-weight-Arens}). One can see that, if  \[
\sum_{n\in \Nat_0} \frac{1}{f(n)^2}<\infty\ \ \text{and}\ \ \sup_{n,m\in \Nat_0} \frac{ f(n+m)}{f(n)+ f(m)} <\infty,
\]
then $\omega_f$ satisfies the conditions of Corollary~\ref{c:injectiv-l^1(H,omega)-for-general-case} and therefore, $\ell^1(\Nat_0,\omega_f)$ is isomorphic to an operator algebra. On the other hand, $\ell^1(\Nat_0,\omega_f)$ can be embedded (isomorphically as a Banach algebra) into $A(\Bbb{T},\sigma_f)$ which is not isomorphic to any operator algebra (as it is not even Arens regular, see Example~\ref{eg:Chebyshev-Arens}).
\end{eg}

\begin{rem}
Note that the assumed  condition for $f$ in Example~\ref{eg:chebyshev-OP} implies the  Arens regularity condition required in Example~\ref{eg:Chebyshev-Arens}. Compare it with this know fact that every Banach algebra which is isomorphic to an operator algebra is Arens regular.
\end{rem}

%\begin{eg}\label{eg:operator-algebra-of-conj(Aff-p} {\bf WRONG}
%In Example~\ref{eg:Aff-Arens-regularity}, we studied the Arens regularity of $\ell^1(\conj(G),\omega_\alpha)$ for $G=\oplus_{p\in{\mathcal P}} \Aff_p$ and the weight $\omega_\alpha$ defined on $\conj(G)$ by Example~\ref{eg:RDPF-groups}. Even more, $\omega_\alpha$ forms a weakly additive weight on $\conj(G)$.
%Therefore, applying Theorem~\ref{t:injectiv-l^1(H,omega)-for-general-case} to study the injectiveness of $\ell^1(\conj(G),\omega_\alpha)$, one can write
%\[
%\sum_{C\in\conj(G)} \frac{1}{\omega_\alpha(C)^2} \leq 1+ \sum_{p\in{\mathcal P}} \frac{\prod_{q\in\{2,3,\cdots,p\}} q} {(p-1)^{2\alpha}} \leq
%1+ \sum_{n=1}^\infty \frac{ p_n!} {(p_n-1)^{2\alpha}}
%%\leq 1+ \sum_{p\in{\mathcal P}} \frac{1}{(p-1)^{2\alpha-1}} + %\sum_{p\in{\mathcal P}}\frac{1}{(p-1)^{2\alpha}}
%\]
%\texttt{It seems that it is not convergent, at least with this upper bound.}
%%which is convergent to $3+{\bf P}(2\alpha-1)+{\bf P}(2\alpha)$ for all $\alpha>1$, where ${\bf P}$ denotes the %%\href{http://mathworld.wolfram.com/PrimeSums.html}{prime zeta function}. In this case,
%%\[
%%\norm{m}_\epsilon \leq \frac{4}{\pi}\left( 3+{\bf P}(2\alpha-1)+{\bf P}(2\alpha)\right)^{1/2}.
%%\]
%\end{eg}

\begin{rem}\label{r:operator-algebra-of-conj(SL(2,2^n)}
Let  $(\conj(G), \omega_\alpha)$  be the weighted hypergroup defined in Example~\ref{eg:SL(2,2n)-Arens-regularity}.  Note that  $\omega_\alpha$ is a weakly additive weight.
One can straightforwardly  show that $\sum_{C\in \conj(G)} \omega(C)^{-2} =\infty$.
Hence, not all weakly additive weights are satisfying the other condition mentioned in Corollary~\ref{c:injectiv-l^1(H,omega)-for-general-case}.
\end{rem}

%\vskip2.0em

For finitely generated hypergroups, we showed that  that polynomial weights  are  weakly additive.  In the following, we study operator algebra isomorphism for weighted hypergroup algebras with polynomial weights. Developing a machinery which relates exponential weights to polynomial ones, we also study exponential weights in Subsection~\ref{ss:OP-exponential-weights}. For the case that $H$ is a group, this has been achieved in \cite{sa3}

\begin{cor}\label{c:injectiv-l^1(H,omega)-for-polynomial-weights}
Let $H$ be a finitely generated hypergroup. If $F$ is a generator of $H$ such that $|F^{*n}|\leq D n^d$ for some $d,D>0$ and $\omega_\beta$ is the polynomial weight on $H$ associated to $F$. Then $\ell^1(H,\omega_\beta)$ is injective if $2\beta>d+1$. Moreover, for $C=\min\{1,2^{\beta-1}\}$,
\[
\norm{\m}_\epsilon \leq  2C\Gr \left( 1 + \sum_{n=1}^\infty \frac{ D n^d}{(1+n)^{2\beta}} \right)^{1/2} .
\]
\end{cor}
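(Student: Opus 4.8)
The plan is to recognise this as a direct specialisation of Corollary~\ref{c:injectiv-l^1(H,omega)-for-general-case}: the polynomial weight $\omega_\beta(x)=(1+\tF(x))^\beta$ is already known to be weakly additive with constant $C=\min\{1,2^{\beta-1}\}$ (this is the weak-additivity fact recorded in the proof of Corollary~\ref{c:Arens-of-finite-generated}), so the only hypothesis left to verify is that $1/\omega_\beta$ is $2$-summable over $H$, after which both the conclusion and the norm bound follow by substitution. First I would therefore reduce everything to estimating $\sum_{x\in H}\omega_\beta(x)^{-2}$ and to controlling the resulting series.

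The heart of the argument is to organise the sum $\sum_{x\in H}\omega_\beta(x)^{-2}=\sum_{x\in H}(1+\tF(x))^{-2\beta}$ according to the level sets of the word-length function $\tF$. Since $\tF(e)=0$, the single point $e$ contributes exactly $1$; for $n\geq 1$ every $x$ with $\tF(x)=n$ lies in $F^{*n}$, so the $n$-th level set has cardinality at most $|F^{*n}|\leq Dn^d$. Grouping the terms this way gives
\[
\sum_{x\in H}\frac{1}{\omega_\beta(x)^2}=1+\sum_{n=1}^\infty\ \sum_{\tF(x)=n}\frac{1}{(1+n)^{2\beta}}\leq 1+\sum_{n=1}^\infty\frac{Dn^d}{(1+n)^{2\beta}}.
\]
The tail of the majorising series behaves like $\sum_n n^{d-2\beta}$, which converges exactly when $2\beta-d>1$, i.e. under the hypothesis $2\beta>d+1$; hence $1/\omega_\beta\in\ell^2(H)$ in this range.

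With weak additivity and $2$-summability both in hand, Corollary~\ref{c:injectiv-l^1(H,omega)-for-general-case} immediately gives that $\ell^1(H,\omega_\beta)$ is injective, and its norm estimate, combined with the displayed bound on $\sum_x\omega_\beta(x)^{-2}$, yields
\[
\norm{\m}_\epsilon\leq 2C\Gr\Big(\sum_{x\in H}\frac{1}{\omega_\beta(x)^2}\Big)^{1/2}\leq 2C\Gr\Big(1+\sum_{n=1}^\infty\frac{Dn^d}{(1+n)^{2\beta}}\Big)^{1/2},
\]
which is the asserted inequality. I expect the only non-mechanical point to be the counting step: one must justify that the growth bound $|F^{*n}|\leq Dn^d$ on the balls $F^{*n}$ really controls the number of points at exact word-length $n$, which holds because the level set $\{\tF=n\}$ is contained in the ball $F^{*n}$ (here using that $F$ is a symmetric generator containing $e$, so the sets $F^{*n}$ are nested). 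Everything past that is a routine substitution into the already-proved corollary.
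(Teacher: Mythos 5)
Your proposal is correct and follows essentially the same route as the paper's own proof: both reduce to Corollary~\ref{c:injectiv-l^1(H,omega)-for-general-case} via the weak additivity of $\omega_\beta$ with constant $C=\min\{1,2^{\beta-1}\}$, and both estimate $\sum_{x\in H}\omega_\beta(x)^{-2}$ by stratifying $H$ into the word-length level sets $F^{*n}\setminus F^{*(n-1)}$ and invoking $|F^{*n}|\leq Dn^d$ to obtain convergence when $2\beta>d+1$ together with the stated bound on $\norm{\m}_\epsilon$. No gaps.
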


\begin{proof}
To prove this corollary, we mainly rely on Corollary~\ref{c:injectiv-l^1(H,omega)-for-general-case}. Recall that $\omega_\beta$ is weakly additive whose constant is $C=\min\{1,2^{\beta-1}\}$.
%\vskip1.5em
To show the desired bound for $\norm{\m}_\epsilon$, note that
\begin{eqnarray*}
\sum_{x\in H} \frac{1}{\omega_\beta(x)^2} &=& \sum_{x\in H} \frac{1}{(1+\tau(x))^{2\beta}}
= \sum_{n=0}^\infty \sum_{\{x\in F^n \setminus F^{n-1}\}} \frac{1}{(1+n)^{2\beta}}\\
&\leq& 1 + \sum_{n=1}^\infty \frac{|F^n|}{(1+n)^{2\beta}}
\leq  1 + \sum_{n=1}^\infty \frac{ D n^d}{(1+n)^{2\beta}}
\end{eqnarray*}
which is convergent if $2\beta>d+1$.
\end{proof}

%\begin{cor}\label{c:polynomial-weights-are-weakly-addetive}
%Let $H$ be a finitely generated hypergroup with polynomial growth. Then each growth weight $\omega_\beta$, $\beta\geq 0$, is  centrally and consequently weakly additive.
%\end{cor}

%The proof has been given in the proof of Corollary~\ref{c:Arens-of-finite-generated}.

%\begin{eg}\label{eg:operator-algebra-of-SU(2)-and-growth-weight}
%As we have seen in Example~\ref{eg:SU(2)-growth-weights}, for each $\beta\geq 0$,  $\omega_\beta$ is the polynomial weight on $\wSU(2)$ associated to $F=\{\pi_0,\pi_{1}\}$. Therefore,  $\omega_\beta$ is weakly additive on $\wSU(2)$. On the other hand, $
%\sum_{\pi\in\wSU(2)} \omega_\beta(\pi)^{-2}  =
 %\sum_{n \in {\Bbb{N}}}  n^{-2\beta}
%$
%which is convergent if $\beta > 1/2$. Note that this bound for $\beta$ verifies the bound which was found in Example~\ref{eg:OP-of-SU(n)} by using a different %property of $\wSU(2)$. Furthermore, Corollary~\ref{c:injectiv-l^1(H,omega)-for-general-case} implies that  for $C=\min\{1,2^{\beta-1}\}$,
%\[
%\norm{m}_\epsilon \leq 2C\Gr \left( \sum_{n=1}^\infty \frac{ 1}{n^{2\beta}} \right)^{1/2}.
%\]
%\end{eg}

\begin{eg}\label{eg:operator-algebra-of-polynomial}
For a polynomial hypergroup $\Bbb{N}_0$, as a finitely generated hypergroup with the generator $F=\{0,1\}$, we have $|F^{*n}|=n+1 \leq 2n$, as we have seen before. %Hence  for $d=1$ and $D=2$,   $|F^n|\leq Dn^d$.
By Corollary~\ref{c:injectiv-l^1(H,omega)-for-polynomial-weights}, for the  polynomial weight $\omega_\beta$ with $\beta>1$ associated to $F$, $\ell^1( \Bbb{N}_0,\omega_\beta)$ is injective.
% In this case also, an argument similar to Example~\ref{eg:operator-algebra-of-SU(2)-and-growth-weight} implies that
For $C=\min\{1,2^{\beta-1}\}$, Corollary~\ref{c:injectiv-l^1(H,omega)-for-general-case} implies that
\[
\norm{\m}_\epsilon \leq 2C\Gr \left( \sum_{n=1}^\infty \frac{ 1}{n^{2\beta}} \right)^{1/2}.
\]
\end{eg}

%Recall that  $\wSU(2)$ can be regarded as a specific case in Example~\ref{eg:operator-algebra-of-polynomial}.  Note that, $\ell^1(\wSU(2),\omega_\sigma)$ can be embedded (isomorphically as  Banach subalgebra $ZA(\SU(2), \omega_\sigma)$) into the  Banach algebra $A(\operatorname{SU}(2), \omega_\sigma)$ which is not isomorphic to any operator algebra if $\beta>0$ as it is not Arens regular, (see Example~\ref{eg:dual-weights-Arens-regular-but-not-the Beurling-Fourier-algebra}).

%\end{subsection}

%%%%%%%%%%%%%%%%%%%%%%%%%%%%%%%%%%%%%%%%%%%%%%%%%%%%%%%%%%%%%%
\begin{subsection}{Hypergroups with exponential weights}\label{ss:OP-exponential-weights}

The other class of weights introduced for finitely generated hypergroups  is the class of exponential weights. As we mentioned before, unlike polynomial weights, exponential weights are not necessarily weakly additive. In this subsection, following \cite{sa3}, we  study operator algebra isomorphism of these weights by studying the cases for them $\Omega$ belongs to $T^2(H)$.  The following lemma is   a hypergroup adaptation of \cite[Theorem~3.3]{sa3}. Since the proof is similar to the one of \cite[Lemma~B.2]{ghan}, we omit it here.

\begin{lemma}\label{l:weight-coming-from-p&q}
Suppose that $0<\alpha<1$, $C>0$, and $\beta\geq \max\left\{ 1, \frac{6}{C\alpha(1-\alpha)}\right\}$.
Define the functions $p:[0,\infty) \rightarrow \Bbb{R}$ and $q:(0,\infty)\rightarrow \Bbb{R}$ by
%\[
$p(x):=Cx^{\alpha}-\beta \ln(1+x)$ and $q(x):=\frac{p(x)}{x}$. 
%\]
Let $H$ be a finitely generated hypergroup with a symmetric generator $F$   and  $\omega: H\rightarrow (0,\infty)$ such that
\[
\omega(x)=e^{p(\tau_F(x))}=e^{\tau_F(x) q(\tau_F(x))}\ \ \text{for all $x\in H$}.
\]
Then $\omega(t)\leq M \omega(x) \omega(y)$ for all $t,x,y\in H$ such that $t\in x*y$ where
\[
M=\max\{e^{p(z_1)-p(z_2)-p(z_3)}: z_1,\ z_2,z_3\in [0,2K]\cap \Bbb{N}_0\}
\]
and
\[
K=\left( \frac{\beta^2}{C\alpha(1-\alpha)}\right)^{1/\alpha}.
\]
\end{lemma}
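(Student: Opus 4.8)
The plan is to reduce the multiplicative estimate to a one–variable inequality for $p$ and then to confine the relevant extremum to the finite box $[0,2K]\cap\Nat_0$. First I would record the two order relations that $\tF$ obeys on a hypergroup. If $t\in\supp(\delta_x*\delta_y)$, then $x\in F^{*n}$ and $y\in F^{*m}$ force $\delta_x*\delta_y$ to be supported in $F^{*(n+m)}$, so $\tF(t)\le\tF(x)+\tF(y)$; and the reciprocity relation $t\in\supp(\delta_x*\delta_y)\Leftrightarrow x\in\supp(\delta_t*\delta_{\check y})$, valid in any hypergroup, together with $F=\check F$, gives $\tF(x)\le\tF(t)+\tF(y)$, whence $|\tF(x)-\tF(y)|\le\tF(t)$. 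Writing $a=\tF(x)$, $b=\tF(y)$, $c=\tF(t)$ and taking logarithms, the claim $\omega(t)\le M\,\omega(x)\omega(y)$ is exactly
\[
p(c)-p(a)-p(b)\le\ln M\qquad(|a-b|\le c\le a+b,\ a,b,c\in\Nat_0).
\]
Since $p(0)=0$, the triple $(0,0,0)$ shows $\ln M\ge 0$, and $\ln M$ is by definition the maximum of $p(z_1)-p(z_2)-p(z_3)$ over all triples in the box; so it suffices to prove that the supremum of $p(c)-p(a)-p(b)$ over admissible triples is attained at one lying in $[0,2K]^3$.

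Next I would pin down the analytic behaviour of $p$ and $q$. Differentiating gives
\[
q'(x)=C(\alpha-1)x^{\alpha-2}+\beta\,\frac{\ln(1+x)-x/(1+x)}{x^{2}},
\]
so the sign of $q'$ is controlled by comparing $\beta\bigl(\ln(1+x)-x/(1+x)\bigr)$ with $C(1-\alpha)x^{\alpha}$. Using $\ln(1+x)-x/(1+x)\le\ln(1+x)$ and an elementary bound of $\ln(1+x)$ by a small power of $x$, the hypotheses $\beta\ge 1$ and $\beta\ge 6/(C\alpha(1-\alpha))$ — which are exactly what makes $K^{\alpha}=\beta^{2}/(C\alpha(1-\alpha))$ a valid threshold — yield $q'(x)<0$ for all $x\ge K$. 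Thus $q$ is decreasing on $[K,\infty)$, and from this I extract three facts: $p$ is increasing there, since $p'(x)=C\alpha x^{\alpha-1}-\beta/(1+x)>0$ once $x^{\alpha-1}(1+x)\ge x^{\alpha}$ and $K^{\alpha}\ge\beta/(C\alpha)$; $p$ is concave there, since $p''(x)=C\alpha(\alpha-1)x^{\alpha-2}+\beta/(1+x)^{2}<0$ for $x\ge K$ by the same kind of comparison; and $p$ is subadditive past $K$, because for $a,b\ge K$ the monotonicity of $q$ gives $p(a+b)=(a+b)q(a+b)\le a\,q(a)+b\,q(b)=p(a)+p(b)$.

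With these in hand I would run a descent on an admissible triple maximizing $p(c)-p(a)-p(b)$, assuming $a\ge b$. If $a>K$ and the constraint $c\le a+b$ is slack, replacing $a$ by $a-1$ keeps the triple admissible and does not decrease the objective, as $p$ is increasing past $K$. If instead $c=a+b$, then lowering $a$ and $c$ together by $1$ also does not decrease the objective, since concavity of $p$ gives $p(c)-p(c-1)\le p(a)-p(a-1)$ for $c\ge a\ge K$. Iterating drives $a$ (and hence $b\le a$) down to at most $K$, so $c\le a+b\le 2K$; the residual degenerate configurations (for instance $a=b$ large with $c$ small) give $p(c)-p(a)-p(b)\le 0\le\ln M$ outright. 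The resulting triple lies in $[0,2K]^3$, so its value is at most $\ln M$, which closes the reduction.

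I expect the real difficulty to sit in the quantitative step of the second paragraph: fixing the elementary bound on $\ln(1+x)$ and verifying that the prescribed lower bounds on $\beta$ force $q'<0$ precisely from $x=K$ onward is where every hypothesis is consumed, and the integer bookkeeping of the descent — keeping both support constraints valid at each elementary move — must be handled with some care at the $a\in[K,K+1]$ boundary. Everything else is routine, and the strong parallel with \cite[Lemma~B.2]{ghan} makes the computational parts standard.
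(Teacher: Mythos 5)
The paper does not actually print a proof of this lemma---it defers to \cite[Theorem~3.3]{sa3} and \cite[Lemma~B.2]{ghan}---so I am measuring your proposal against that standard argument. Your overall route is the right one and coincides with it in substance: the reduction of $\omega(t)\le M\omega(x)\omega(y)$ to the numerical inequality $p(c)-p(a)-p(b)\le \ln M$ with $c\le a+b$ (via $\supp(\delta_x*\delta_y)\subseteq F^{*(\tau_F(x)+\tau_F(y))}$) is correct and is the only genuinely hypergroup-specific content; your derivation of $|\tau_F(x)-\tau_F(y)|\le\tau_F(t)$ from the support reciprocity $t\in\supp(\delta_x*\delta_y)\Leftrightarrow x\in\supp(\delta_t*\delta_{\check y})$ is also valid (though in fact only the upper constraint $c\le a+b$ is ever needed, since $M$ is a maximum over \emph{all} triples in the box, not just admissible ones). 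The properties you extract---$q$ decreasing, hence $p$ increasing, concave and subadditive on $[K,\infty)$---are exactly the engine of the referenced proof, and your verifications of $p'>0$ and $p''<0$ from $C\alpha(1-\alpha)K^\alpha=\beta^2\ge\beta$ are correct.

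Two loose ends remain, both of which you flag but neither of which you close. First, the claim $q'(x)<0$ for $x\ge K$ is asserted, not proved; it does hold (reduce to $\beta\ln(1+K)\le\beta^2/\alpha$ after checking that $x\mapsto C(1-\alpha)x^\alpha-\beta\ln(1+x)$ is increasing on $[K,\infty)$ because $\beta\ge1$, and then use $K\le(\beta^3/6)^{1/\alpha}$ from $\beta\ge 6/(C\alpha(1-\alpha))$ together with $3\ln\beta\le\beta+\ln 3$), but this computation is precisely where both hypotheses on $\beta$ are consumed and it must appear in the proof. Second, and more substantively, your descent only has monotonicity and concavity of $p$ available on $[K,\infty)$, so the last admissible move is the one with $a-1\ge K$; the terminal triple therefore satisfies only $a,b\le\lceil K\rceil\le K+1$ and $c\le 2K+2$, which need not lie in $[0,2K]$. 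As written your argument proves the lemma with $M$ taken over $[0,2K+2]\cap\Bbb{N}_0$ rather than $[0,2K]\cap\Bbb{N}_0$. This is immaterial for everything the paper does with the lemma (Theorem~\ref{t:injectiv-l^1(H,sigma)-for-exponential-weights} only needs \emph{some} finite $M$ depending on $C,\alpha,\beta$), but it is a genuine discrepancy with the stated constant, and you should either enlarge the box in the statement or replace the unit-step descent near the threshold by a direct case analysis ($a,b\ge K$ handled by subadditivity; $b<K\le a$ handled by the mean-value estimate $p(c)-p(a)\le (c-a)\,p'(a)$ with $c-a\le b$) that lands exactly where the stated $M$ requires.
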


\begin{theorem}\label{t:injectiv-l^1(H,sigma)-for-exponential-weights}
Let $H$ be a finitely generated hypergroup. If $F$ is a symmetric generator of $H$ such that $|F^{*n}|\leq D n^d$ for some $d,D>0$ and $\sigma_{\alpha,C}$ is an exponential weight on $H$ for some $0<\alpha<1$ and $C>0$. Then $\ell^1(H,\sigma_{\alpha,C})$ is injective and equivalently isomorphic to an operator algebra.
%Moreover,
%\[
%\norm{m}_\epsilon \leq 8M\Gr \left( 1+ \sum_{n=1}^\infty \frac{Dn^d}{(1+n)^{2\beta}}\right)^{1/2}
%\]
%for
%\[
%\beta>\max\left\{ 1, \frac{6}{C\alpha(1-\alpha)}, \frac{d+1}{2} \right\}
%\]
%and  some  $M>0$.
\end{theorem}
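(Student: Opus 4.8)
The plan is to verify the hypothesis of Theorem~\ref{t:injectiv-l^1(H,omega)-for-T2-weights}, namely that $\Omega_{\sigma_{\alpha,C}}\in T^2(H)$, by factoring the exponential weight into a piece controlled by Lemma~\ref{l:weight-coming-from-p&q} and a polynomial piece that is weakly additive and square-summable. First I would choose $\beta$ large enough to satisfy all constraints at once, i.e. $\beta>\max\{1,\ 6/(C\alpha(1-\alpha)),\ (d+1)/2\}$; such a $\beta$ exists since each bound is met by all sufficiently large $\beta$. Setting $p(x):=Cx^\alpha-\beta\ln(1+x)$ and $\omega(x):=e^{p(\tF(x))}$ as in Lemma~\ref{l:weight-coming-from-p&q}, the basic observation is the factorization
\[
\sigma_{\alpha,C}(x)=e^{C\tF(x)^\alpha}=e^{p(\tF(x))}\,(1+\tF(x))^\beta=\omega(x)\,\omega_\beta(x),
\]
so that the exponential weight is the pointwise product of $\omega$ and the polynomial weight $\omega_\beta$ of Definition~\ref{d:polynomial-exponential}.

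Next I would estimate $\Omega_{\sigma_{\alpha,C}}$ on the support. Fix $x,y\in H$ and $t\in\supp(\delta_x*\delta_y)$. Lemma~\ref{l:weight-coming-from-p&q} supplies a constant $M$ with $\omega(t)\le M\,\omega(x)\omega(y)$, while $\tF(t)\le\tF(x)+\tF(y)$ forces the polynomial factor to obey $\omega_\beta(t)\le C_\beta(\omega_\beta(x)+\omega_\beta(y))$ for a constant $C_\beta$ depending only on $\beta$. Dividing the product of these two inequalities by $\sigma_{\alpha,C}(x)\sigma_{\alpha,C}(y)=\omega(x)\omega_\beta(x)\,\omega(y)\omega_\beta(y)$ gives, for every such $t$,
\[
\frac{\sigma_{\alpha,C}(t)}{\sigma_{\alpha,C}(x)\sigma_{\alpha,C}(y)}\le M\,C_\beta\left(\frac{1}{\omega_\beta(x)}+\frac{1}{\omega_\beta(y)}\right).
\]
Because $\delta_x*\delta_y$ is a probability measure, integrating this bound against it leaves the right-hand side unchanged, whence
\[
\Omega_{\sigma_{\alpha,C}}(x,y)\le M\,C_\beta\left(\frac{1}{\omega_\beta(x)}+\frac{1}{\omega_\beta(y)}\right).
\]

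It then remains to place this majorant in $T^2(H)$, exactly as in the proof of Corollary~\ref{c:injectiv-l^1(H,omega)-for-general-case}. Using the decomposition $f_1(x,y):=MC_\beta/\omega_\beta(x)$ and $f_2(x,y):=MC_\beta/\omega_\beta(y)$, one has $\sup_{y}\sum_{x}|f_1(x,y)|^2=(MC_\beta)^2\sum_{x\in H}\omega_\beta(x)^{-2}$ and symmetrically for $f_2$, so it suffices that $\sum_{x\in H}\omega_\beta(x)^{-2}<\infty$. The growth hypothesis $|F^{*n}|\le Dn^d$ together with the computation in Corollary~\ref{c:injectiv-l^1(H,omega)-for-polynomial-weights} gives $\sum_{x\in H}\omega_\beta(x)^{-2}\le 1+\sum_{n\ge 1}Dn^d/(1+n)^{2\beta}$, which converges precisely because $2\beta>d+1$. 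Hence $\Omega_{\sigma_{\alpha,C}}\in T^2(H)$, and Theorem~\ref{t:injectiv-l^1(H,omega)-for-T2-weights} yields that $\ell^1(H,\sigma_{\alpha,C})$ is injective, hence isomorphic to an operator algebra, with $\norm{\m}_\epsilon\le 2MC_\beta\,\Gr\,\big(\sum_{x\in H}\omega_\beta(x)^{-2}\big)^{1/2}$.

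The conceptual crux, and the step I expect to be the main obstacle, is the factorization $\sigma_{\alpha,C}=\omega\cdot\omega_\beta$ combined with the insight that Lemma~\ref{l:weight-coming-from-p&q} allows the non-weakly-additive exponential part to be absorbed into the single multiplicative constant $M$, thereby reducing the whole $T^2$-estimate to the weakly additive polynomial factor. The only things to watch are that $\beta$ must simultaneously exceed Lemma~\ref{l:weight-coming-from-p&q}'s threshold and the summability threshold $2\beta>d+1$, which is harmless since both hold for all large $\beta$, and that the two support-wise inequalities are legitimately combined inside the single sum defining $\Omega_{\sigma_{\alpha,C}}$.
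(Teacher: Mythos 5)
Your proposal is correct and follows essentially the same route as the paper: the same choice of $\beta>\max\{1,\,6/(C\alpha(1-\alpha)),\,(d+1)/2\}$, the same factorization $\sigma_{\alpha,C}=\omega\cdot\omega_\beta$ with $\omega(x)=e^{p(\tF(x))}$ controlled by Lemma~\ref{l:weight-coming-from-p&q}, the reduction of $\Omega_{\sigma_{\alpha,C}}$ to the weakly additive polynomial majorant, and the square-summability check from $2\beta>d+1$ before invoking Theorem~\ref{t:injectiv-l^1(H,omega)-for-T2-weights}. Your write-up is in fact slightly more explicit than the paper's (in integrating the pointwise bound against the probability measure $\delta_x*\delta_y$ and in exhibiting the $T^2$ decomposition), but there is no substantive difference.
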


\begin{proof}
Let $\omega_\beta$ be the weight defined in Lemma~\ref{l:weight-coming-from-p&q}.
We define a function $\omega:H \rightarrow (0,\infty)$ by
\[
\omega(x):=\frac{\sigma_{\alpha,C}(x)}{\omega_\beta(x)} = e^{C\tF(x)^\alpha - \beta \ln(1+\tF(x))}\ \ (x\in H)
\]
where $\omega_\beta$ is the polynomial weight defined on $H$ associated to $F$ and
\[
\beta>\max \{ 1, \frac{6}{C\alpha(1-\alpha)}, \frac{d+1}{2} \}.
\]
 Therefore, by Lemma~\ref{l:weight-coming-from-p&q}, $\omega(t)\leq M \omega(x)\omega(y)$ for some $M>0$ and all $t,x,y\in H$ such that $t\in x*y$.
Therefore
%\begin{eqnarray*}
%\sigma_{\alpha,C}(t) =  \omega_\beta(t) \frac{\sigma_{\alpha,C}(t)}{\omega_\beta(t)} \leq  \omega_\beta(t) \frac{\sigma_{\alpha,C}(t)}{\omega_\beta(t)}
% = \omega_\beta(t) \omega(t) \leq M \omega(x) \omega(y) \omega_\beta(t).
%\end{eqnarray*}
%Therefore,
\[
\frac{\sigma_{\alpha,C}(t)}{\sigma_{\alpha,C}(x)\sigma_{\alpha,C}(y)} \leq M \frac{\omega_\beta(t)}{\omega_\beta(x)\omega_\beta(y)}.
\]
Therefore,
%On the other hand, by the argument in the proof of Theorem~\ref{c:injectiv-l^1(H,omega)-for-polynomial-weights} and since $\beta>(d+1)/2$,
\[
\frac{\sigma_{\alpha,C}(t)}{\sigma_{\alpha,C}(x)\sigma_{\alpha,C}(y)} \leq M' \left( \frac{1}{(1+\tau(x))^\beta}+\frac{1}{(1+\tau(y))^\beta}\right)
\]
for a modified constant $M'>0$.
%Define
%\[
%\Sigma_{\alpha,C}^\beta(x,y):=M \left( \frac{1}{(1+\tau(x))^\beta}+\frac{1}{(1+\tau(y))^\beta}\right)\ (x,y\in H)
%\]
%Therefore, $\Omega_{\sigma_{\alpha,C}}\in T^2(H)$ and therefore, by Corollary~\ref{c:injectiv-l^1(H,omega)-for-polynomial-weights} for $\norm{m}_\epsilon$, implies that
%\[
%\norm{m}_\epsilon \leq 4\Gr \norm{\Omega_{\sigma_{\alpha,C}}}_{T^2(H)} \leq 4\Gr \norm{
%\Sigma_{\alpha,C}^\beta }_{T^2(H)} \leq
%8M\Gr \left( 1+ \sum_{n=1}^\infty \frac{Dn^d}{(1+n)^{2\beta}}\right)^{1/2}.
%\]
Therefore by the proof of Corollary~\ref{c:injectiv-l^1(H,omega)-for-polynomial-weights}, $\Omega_{\sigma_{\alpha,C}}\in T^2(H)$.  Now  Theorem~\ref{t:injectiv-l^1(H,omega)-for-T2-weights} finishes the proof.
\end{proof}

\begin{eg}
%For $\wSU(2)$, as a finitely generated hypergroup with generator $F=\{\pi_0,\pi_{1/2}\}$, we have $|F^n|=n+1 \leq 2n$.  Similarly, when $\Bbb{N}_0$ is the polynomial hypergroup, as we have seen in Section~\ref{s:polynomial-hypergroups}, $\Bbb{N}_0$ is finitely generated with a generator $F$ such that $|F^n|=n+1 \leq 2n$. Hence if $H$ is any of hypergroups $\wSU(2)$ or $\Bbb{N}_0$, for $d=1$ and $D=2$,   $|F^n|\leq Dn^d$.

%for a polynomial weighte $\omega_\beta$ with $\beta>1$, $\ell^1(H,\omega_\beta)$ is injective, when $H$ is $\wSU(2)$ or $\Bbb{N}_0$.
%In this case,
%\[
%\norm{m}_\epsilon \leq \frac{4}{\pi}  \left( 1 + \sum_{n=1}^\infty \frac{ 2 n}{(1+n)^{2\beta}} \right)^{1/2}
%\]

As a result of Theorem~\ref{t:injectiv-l^1(H,sigma)-for-exponential-weights}, and to follow  Example~\ref{eg:operator-algebra-of-polynomial}, if $H$ is a polynomial hypergroup on  $\Bbb{N}_0$, for each exponential weight $\sigma_{\alpha,C}$ for $0<\alpha<1$ and $C>0$, $\ell^1(H,\sigma_{\alpha,C})$ is injective. Note that  this class of hypergroups includes $\wSU(2)$.
%In either of those hypergroups,
%\[
%\norm{m}_\epsilon \leq 8M\Gr  \left( \sum_{n=1}^\infty \frac{ 1}{n^{2\beta}} \right)^{1/2}
%\]
%for all $\beta> \max\{1, 6/C\alpha(1-\alpha)\}$ and some $M>0$.
\end{eg}

\end{subsection}

\end{section}

\section*{Acknowledgements}
For this research, the first author was supported by a Ph.D. Dean's Scholarship at University of Saskatchewan and a Postdoctoral Fellowship form the Fields Institute For Research In Mathematical Sciences and University of Waterloo. The second name author was also supported by NSERC Discovery grant no 409364 and a generous support from the Fields Institute.  The first named author also would like to express his deep gratitude to Yemon Choi and Nico Spronk  for several constructive discussions and suggestions which improved the paper significantly. The authors also would like to thank the referee for his many productive comments.

\appendix
\begin{section}{Appendix: Lifting weights from $\Bbb{Z}$ to weights on $A({\SU}(2))$}\label{a:SU(2)}

In this appendix, we briefly present a method to construct non-central   weights on $A({\SU}(2))$ which are related to Example~\ref{eg:weight-driven-from-Z-weights}. Here  $\lambda_G$ denotes the left regular representation of a compact group $G$ on $L^2(G)$ and $VN(G)$ denotes the group von Neumann algebra generated by $\lambda_G$.
We also identify $\bT$ with the (closed) subgroup of all matrices
\[
\left[
\begin{array}{c c}
{t} & 0 \\
0 & \overline{t}
\end{array}
\right], \ \ \ (t\in \bT)
\]
in $\SU(2)$. It is an immediate consequence of Herz's restriction theorem that there is a canonical embedding of $VN(\bT)$ into $VN(\SU(2))$. More precisely, the mapping $\Gamma: VN(\bT) \to VN(\SU(2))$ defined by
$$\Gamma(\lambda_\bT(f))=\int_\bT f(t) \lambda_{\SU(2)}(f) dt$$
is a $\text{weak}^*$-$\text{weak}^*$ isometric $*$-algebra homomorphism. We note that the integration in the definition of $\Gamma$ is the Bochnor integration in 
the weak operator topology of $B(L^2(\SU(2)))$. 

Now suppose that $\sigma$ is a (group) weight on $\Bbb{Z}$ which is bounded below by some $\delta>0$, i.e. $\sigma^{-1}$ belongs to  $\ell^\infty(\Zat)$.  Through the Fourier transform $\mathcal{F}$ on $\Zat$, $\mathcal{F}(\sigma^{-1})$ is an element in $VN(\bT)$ defined by
$$\mathcal{F}(\sigma^{-1}) \chi_k = \sigma(k)^{-1} \chi_k,$$ where $\chi_k(t)=t^k$ ($k \in \Bbb{Z}$) are the characters of $\mathbb{T}$. 

We now consider the element $\Gamma(\mathcal{F}(\sigma^{-1}))$ in $VN(\SU(2))$. Since $\SU(2)$ is compact, we can write $\lambda_{\SU(2)}$ as the direct sum of the irreducible unitary representations of $\SU(2)$. Moreover, if we take $\widehat{\SU}(2)=\{ \pi_\ell: \ell \in \Nat_0\}$, where each $\pi_\ell$ is a representation of dimension $\ell+1$,
then, by a straightforward computation based on \cite[Theorem~29.18]{he2}, we have that $\Gamma(\mathcal{F}(\sigma^{-1}))(\pi_\ell)$ is the diagonal matrix $\operatorname{diag}( {\sigma(-\ell)}^{-1} ,  {\sigma(-\ell+2)}^{-1}, \ldots,  {\sigma(\ell-2)}^{-1},  {\sigma(\ell)}^{-1} )$.
Therefore, if we define, 
\begin{equation}\label{eq:W}
W  =  \bigoplus_{\ell} - \left[
\begin{array}{c c c c c}
 {\sigma(-\ell)} & 0 & \cdots & 0 & 0 \\
0 & {\sigma(-\ell+2)} & \cdots & 0 & 0\\
\vdots &  \vdots & \ddots & \vdots & \vdots\\
0 & 0 & \cdots & {\sigma(\ell-2)} & 0 \\
0 & 0 & \cdots & 0 &  {\sigma(\ell)}
\end{array}
\right],
\end{equation}
then $W$ is a (possibly unbounded) operator $L^2(\SU(2))$ with $W^{-1}=\Gamma(\mathcal{F}(\sigma^{-1}))\in VN(\SU(2))$. Moreover, it is straightforward to check that $W$ 
satisfies the assumptions in \cite[Definition~2.4]{LeSa} so that, in particular, it is a Fourier algebra weight on $A(\SU(2))$. We can apply the formula in Definition~\ref{d:defining-weights-on-^G} to $W$ and define the (hypergroup) weight $\omega_\sigma$ on $\SU(2)$ presented in Example~\ref{eg:weight-driven-from-Z-weights}. %One may note that constructing non-central Fourier weights is a rather challenging task.
%Since the linear space spanned by  $\{\chi_k:  k\in \Bbb{Z}\}$  is dense in $C^*(\bT)$, we can characterize the action of $\mathcal{F}(\sigma^{-1})$ as an element in $VN(\Bbb{T}))$ on  $C^*(\Bbb{T})$.

\end{section}

%%%%%%%%%%%%%%%%%%%%%%%%%%%%%%%%%%%%%%%%%%Spectra of weighted Fourier algebras on non-compact Lie groups

%%%%%%%%%%%%%%%%%%%%%%%%%%%%%%%%%%%%%%%%%%%%%%%%%%%%%%%%%%%%%%%%%%%%%%%%%%%%%%%%%%%%%%%%%%%%%%%%%%%%%%%%%

%%%%%%%%%%%%%%%%%%%%%%%%%%%%%%%%%%%%%%%%%%%%%%%%%%%%%%%%%%%%%%%%%%%%%%
\footnotesize

\bibliographystyle{plain}
\bibliography{Bibliography}
$\line(1,0){200}$

{\footnotesize
{\bf Mahmood Alaghmandan}\newline\indent
Department of Mathematical Sciences, Chalmers University of Technology and  University of Gothenburg, Gothenburg SE-412 96, Sweden

\texttt{mahala@chalmers.se}
\vskip1.0em
%\address{
{\bf Ebrahim Samei}\newline\indent
Department of Mathematics and Statistics,
University of Saskatchewan,
142 Wiggins road,
Saskatoon, SK S7N 5E6, Canada\newline\indent
\texttt{samei@math.usask.ca}
}

\end{document}